\documentclass{amsart}

\usepackage[utf8]{inputenc}
\usepackage[numbers]{natbib} 
\usepackage{hyperref}
\usepackage{float}
\usepackage{amsmath, amssymb, amsthm}
\usepackage{geometry}
\usepackage{graphicx}
\usepackage{mathrsfs} 
\usepackage{tikz-cd}

\newtheorem{theorem}{Theorem}[section]

\newtheorem{lemma}[theorem]{Lemma}
\newtheorem{corollary}[theorem]{Corollary}
\theoremstyle{definition}
\newtheorem{definition}[theorem]{Definition}
\theoremstyle{remark}

\theoremstyle{plain}
\newtheorem{proposition}[theorem]{Proposition}

\theoremstyle{definition}
\newtheorem{example}{Example}[section]
\title{Equivariant Maximal Cohen-Macaulay sheaves on the minimal orbit closures}
\author{Shang Xu}

\begin{document}

\begin{abstract}
In this paper, we study maximal Cohen-Macaulay sheaves on closures of minimal nilpotent orbits in simple Lie algebras. For singularities of type $A_n$, we first classify vector bundles on their symplectic resolutions whose pushforwards are maximal Cohen-Macaulay. We then construct equivariant maximal Cohen-Macaulay sheaves via irreducible representations of the stabilizer group. We compare these two approaches in the case of maximal Cohen-Macaulay Weil divisors, and extend the equivariant construction to the classical types $B_n$, $C_n$, and $D_n$. Finally, we formulate the construction for an arbitrary simple Lie algebra and carry it out explicitly in the exceptional cases.
\end{abstract}
\maketitle
\tableofcontents
\section{Introduction}

Let $G$ be a simply connected complex Lie group with simple Lie algebra $\mathfrak{g}$. Under the adjoint action of $G$ on $\mathfrak{g}$, the set of nilpotent elements $\mathcal{N}$ decomposes into finitely many $G$-orbits. Among the nonzero $G$-orbits in $\mathcal {N}$, there is a unique orbit of minimal dimension. This is called the minimal nilpotent orbit, usually denoted by $\mathcal{O}_{\min}$. Its closure is $\overline{\mathcal O_{\min}}=\mathcal O_{\min}\cup\{0\}$, where $0$ is an isolated singularity. With Kostant–Kirillov symplectic form on $\mathcal O_{\min}$, this singularity is a \textit{symplectic singularity} in Beauville's sense in \cite{Beauville2000SymplecticSingularities}. It's also mentioned in this paper that those minimal orbit closures are indeed analytic prototypes for isolated symplectic singularities with smooth projective tangent cones.

Maximal Cohen-Macaulay sheaves play a central role in the study of singular algebraic varieties. They arise naturally as higher syzygies in free resolutions and may be regarded as the sheaves that are, in a precise sense, closest to vector bundles on smooth spaces. Maximal Cohen-Macaulay sheaves on surface symplectic singularities, also known as ADE singularities, were classified in \cite{ArtinVerdierEsnault1985}. In \cite{xu2026constructingmaximalcohenmacaulaysheaves}, we constructed many families of maximal Cohen-Macaulay sheaves on $\mathcal{N}_{n+1}$, the minimal nilpotent orbit closure of type $A_n$, showing the complexity of this classification problem in higher symplectic singularities. 

In the first part of this paper, we further develop that method and classify vector bundles on symplectic resolutions of isolated symplectic singularities whose pushforwards are maximal Cohen-Macaulay sheaves. The main result of this part, stated in Section~\ref{MCMonsymplecticresolution}, is the following: 
\begin{theorem}[Theorem \ref{isolatedsymplecticMCM}]
    Suppose $X$ is a $2n$-dimensional variety with only isolated singularities and $\pi:\widetilde{X}\rightarrow X$ is a symplectic resolution. For a vector bundle $\mathcal{F}$ on $\widetilde{X}$, $\pi_*\mathcal{F}$ is maximal Cohen-Macaulay if and only if
\begin{enumerate}
\item $R^i\pi_*\mathcal{F}=R^i\pi_*\mathcal{F}^\vee=0$ for $1\leq i\leq n-1$,
\item $e_{\mathcal{F}}:(R^{n}\pi_*\mathcal{F})'\rightarrow R^n\pi_*\mathcal{F}^\vee$ is an isomorphism.
\end{enumerate}
\end{theorem}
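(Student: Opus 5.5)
We work locally: we may assume $X=\operatorname{Spec}R$ is affine, with a single isolated singular point $x$, and set $E=\pi^{-1}(x)$ and $U=\widetilde X\setminus E\cong X\setminus\{x\}$. Recall that $\widetilde X$ is symplectic, so $\omega_{\widetilde X}\cong\mathcal O_{\widetilde X}$; moreover $X$ has rational Gorenstein singularities, so $\omega_X\cong\mathcal O_X$ and $R\pi_*\mathcal O_{\widetilde X}=\mathcal O_X$. Since a symplectic resolution is semismall, $\dim E\le n$. Consequently $R^i\pi_*\mathcal G=0$ for $i>n$ and every coherent $\mathcal G$, which is why only the degrees $i\le n$ can appear in the statement.

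We use the depth characterization. The module $M=\Gamma(\pi_*\mathcal F)=H^0(\widetilde X,\mathcal F)$ is maximal Cohen--Macaulay iff $H^i_x(M)=0$ for $i<2n$. Since $\pi_*\mathcal F$ is reflexive-like, being $j_*$ of a bundle on $U$ (as $\operatorname{codim}E\ge n\ge 2$), we get $H^0_x=H^1_x=0$ and $H^i_x(M)\cong H^{i-1}(U,\mathcal F)$ for $i\ge 2$. So the condition is $H^j(U,\mathcal F)=0$ for $1\le j\le 2n-2$.

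Next we compare with $\widetilde X$ via the local cohomology sequence
$$\cdots\to H^j_E(\widetilde X,\mathcal F)\to H^j(\widetilde X,\mathcal F)\to H^j(U,\mathcal F)\to H^{j+1}_E(\widetilde X,\mathcal F)\to\cdots$$
Here $H^j(\widetilde X,\mathcal F)=\Gamma(R^j\pi_*\mathcal F)$ vanishes for $j>n$. By formal duality on $\widetilde X$ (Hartshorne's duality between $H^j_E$ and the completion of $R^{2n-j}\pi_*$ of the dual twisted by the trivial canonical bundle), $H^j_E(\widetilde X,\mathcal F)$ is Matlis dual to $(R^{2n-j}\pi_*\mathcal F^\vee)^\wedge_x$. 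Hence $H^j_E(\mathcal F)=0$ for $j<n$, and for $n\le j\le 2n$ it is dual to $R^{2n-j}\pi_*\mathcal F^\vee$. These modules are of finite length when $2n-j\ge1$, since their support is $\{x\}$. Using the sequence:
\begin{enumerate}
\item For $1\le j\le n-2$ we get $H^j(U)\cong H^j(\widetilde X)=R^j\pi_*\mathcal F$.
\item For $n+1\le j\le 2n-2$ we get $H^j(U)\cong H^{j+1}_E\cong (R^{2n-j-1}\pi_*\mathcal F^\vee)'$, which runs over $R^i\pi_*\mathcal F^\vee$ with $1\le i\le n-2$.
\item The middle degrees $j=n-1,n$ sit in the exact piece
$$0\to H^{n-1}(\widetilde X,\mathcal F)\to H^{n-1}(U,\mathcal F)\to H^n_E(\mathcal F)\to H^n(\widetilde X,\mathcal F)\to H^n(U,\mathcal F)\to H^{n+1}_E(\mathcal F)\to 0.$$
In this piece $H^n_E(\mathcal F)$ is the dual of the completion of $\pi_*\mathcal F^\vee$, which is not finite length, and $H^{n+1}_E(\mathcal F)\cong (R^{n-1}\pi_*\mathcal F^\vee)'$.
\end{enumerate}

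The main obstacle is this middle piece, and we must pin down the map $H^n_E(\mathcal F)\to H^n(\widetilde X,\mathcal F)=R^n\pi_*\mathcal F$. Since the target has finite length, this map factors through the finite-length part. We plan to identify the composite, after dualizing, with the paper's map $e_{\mathcal F}:(R^n\pi_*\mathcal F)'\to R^n\pi_*\mathcal F^\vee$. Concretely, Serre-type duality on a neighborhood of $E$ gives a pairing $R^n\pi_*\mathcal F\otimes R^n\pi_*\mathcal F^\vee\to R^{2n}_E\omega\cong k$, and $e_{\mathcal F}$ should be induced by the natural map $H^n_E\to H^n$ composed with duality. Granting this identification, we argue as follows.

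\textbf{Sufficiency.} Under (1), the three-term pieces give vanishing of $H^j(U)$ in all degrees except $j=n-1,n$. There the sequence becomes
$$0\to H^{n-1}(U)\to H^n_E\to R^n\pi_*\mathcal F\to H^n(U)\to 0.$$
Vanishing of $H^{n-1}(U)$ and $H^n(U)$ therefore means exactly that $H^n_E\to R^n\pi_*\mathcal F$ is an isomorphism.

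The issue is that $H^n_E$ is not finite length. We resolve this by reinterpreting the comparison via $H^n_x(M)$ versus the local cohomology of $\pi_*\mathcal F^\vee$, using the fact that $\pi_*$ of $\mathcal F$ and of $\mathcal F^\vee$ are dual reflexive modules on a Gorenstein ring. Local duality then turns the condition into bijectivity of $e_{\mathcal F}$.

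\textbf{Necessity.} Maximal Cohen--Macaulayness of $M$ implies the same for $\operatorname{Hom}(M,R)=\pi_*\mathcal F^\vee$. The vanishings in (1) for $\mathcal F$ and for $\mathcal F^\vee$ then follow from the isomorphisms in the first two cases above, and the middle-degree analysis gives (2). Some care is needed at $n=1$, where (1) is vacuous and everything is in the middle degree; at $n=2$, where the ranges in the first two cases are empty; and at the edge index $i=n-1$, where $R^{n-1}\pi_*\mathcal F$ and $(R^{n-1}\pi_*\mathcal F^\vee)'$ appear as the injective and surjective ends of the middle piece.
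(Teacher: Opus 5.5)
\textbf{The gap is in the middle degree, which is exactly where condition (2) lives.} By the formal duality you invoke, $H^j_E(\widetilde X,\mathcal F)$ is Matlis dual to $(R^{2n-j}\pi_*\mathcal F^\vee)^\wedge$. So $H^n_E(\mathcal F)$ is dual to $R^n\pi_*\mathcal F^\vee$, which has finite length; you note this yourself for every $j$ with $2n-j\ge 1$. The module dual to $\widehat{\pi_*\mathcal F^\vee}$ is $H^{2n}_E(\mathcal F)$, and it never enters, since you only need $H^j(U,\mathcal F)$ for $j\le 2n-2$.

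Your ``main obstacle'' is therefore an indexing slip. The paragraph meant to overcome it (``reinterpreting via $H^n_x(M)$ \dots\ local duality then turns the condition into bijectivity of $e_{\mathcal F}$'') is not an argument. A sanity check shows the claim cannot be right: $H^n_E(\mathcal O_{\widetilde X})=H^n_x(R)=0$ because $R$ is Cohen--Macaulay, whereas your description would make it the injective hull of $k$.

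The pairing you write down is also the wrong one. The cup product $R^n\pi_*\mathcal F\otimes R^n\pi_*\mathcal F^\vee\to H^{2n}(\widetilde X,\mathcal O)$ lands in $0$. Moreover $H^{2n}_E(\mathcal O)=H^{2n}_x(R)$ is the injective hull of $k$, not $k$. The relevant duality is between $H^n_E(\mathcal F)$ and $H^n(\widetilde X,\mathcal F^\vee)$.

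\textbf{The corrected argument.} With the correct identification, the middle piece reads
$$0\to R^{n-1}\pi_*\mathcal F\to H^{n-1}(U,\mathcal F)\to (R^n\pi_*\mathcal F^\vee)'\xrightarrow{\ \phi\ }R^n\pi_*\mathcal F\to H^n(U,\mathcal F)\to (R^{n-1}\pi_*\mathcal F^\vee)'\to 0.$$
Together with your first two cases, this gives both directions at once, with no need to pass to $\operatorname{Hom}(M,R)$.

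What is still missing is the step you only ``grant'': that the forget-supports map $\phi$ is the transpose of $e_{\mathcal F}$. This is true but must be proved, as follows.
\begin{itemize}
\item[(a)] $\phi$ is $H^n$ of $\mathbf R\Gamma_x\mathbf R\pi_*\mathcal F\to\mathbf R\pi_*\mathcal F$.
\item[(b)] It factors through the truncation $\mathbf R\pi_*\mathcal F\to\tau_{\ge n}\mathbf R\pi_*\mathcal F=R^n\pi_*\mathcal F[-n]$, a complex supported at $x$.
\item[(c)] Local duality on the Gorenstein ring $R$ is natural in the complex. It therefore turns this truncation into the edge map $\operatorname{Ext}^{2n}_R(R^n\pi_*\mathcal F,R)\to\operatorname{Ext}^n_R(\mathbf R\pi_*\mathcal F,R)\cong H^n(\mathcal F^\vee)$, which is how the paper defines $e_{\mathcal F}$.
\end{itemize}

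\textbf{Comparison with the paper.} The paper stays on $X$. It uses the Ext criterion and the spectral sequence of Lemma~\ref{specsequence}, and reads off from its differentials the sequence
$$0\to R^{n-1}\pi_*\mathcal F^\vee\to\operatorname{Ext}^{n-1}(M,R)\to(R^n\pi_*\mathcal F)'\xrightarrow{e_{\mathcal F}}R^n\pi_*\mathcal F^\vee\to\operatorname{Ext}^n(M,R)\to(R^{n-1}\pi_*\mathcal F)'\to 0.$$
Your corrected sequence is exactly its Matlis dual, since $H^j(U,\mathcal F)\cong H^{j+1}_x(M)$ is dual to $\operatorname{Ext}^{2n-1-j}_R(M,R)$. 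The paper gets $e_{\mathcal F}$ for free as an edge map, at the cost of spectral-sequence bookkeeping. Your route replaces that bookkeeping with a single long exact sequence, but needs formal duality on $\widetilde X$ plus the compatibility check above.

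\textbf{The case $n=1$.} No amount of care can rescue it, because the statement is false there. Take the $A_1$ surface resolution with exceptional curve $E$ and $\mathcal F=\mathcal O(E)$. Then $\pi_*\mathcal F=\mathcal O_X$ is maximal Cohen--Macaulay. But $R^1\pi_*\mathcal F=H^1(\mathcal O_{\mathbb P^1}(-2))=\mathbb C$ and $R^1\pi_*\mathcal F^\vee=0$, so $e_{\mathcal F}$ is not an isomorphism. One needs $n\ge 2$, which is also what your reflexivity step ($\operatorname{codim}E\ge 2$) uses.
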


We use this theorem to check divisors on $\mathcal{N}_{n+1}$ and conclude that in $\operatorname{Cl}\mathcal{N}_{n+1}=\mathbb{Z}$, with a generator $D$, $\mathcal{O}(kD)$ is maximal Cohen-Macaulay if and only if $-n\leq k\leq n$. These maximal Cohen-Macaulay divisors are exactly what we have constructed in \cite{xu2026constructingmaximalcohenmacaulaysheaves}. We cite the theorem here,

Although the classification of all maximal Cohen-Macaulay modules on $\mathcal{N}_{n+1}$ for $n\geq 3$ appears difficult, the equivariant setting is more tractable. Restricting to $SL(n+1)$-equivariant sheaves, Theorem \ref{localseqMCM} and the Borel-Weil-Bott theorem translates the maximal Cohen-Macaulay condition into an explicit combinatorial criterion on weights. We give a criterion determining when the sheaf induced by an irreducible representation of the stabilizer subgroup is maximal Cohen-Macaulay. 
\begin{theorem}[Theorem \ref{SLMCM}]
    Suppose $\mathcal{O}_{\min}$ is the minimal orbit in $\mathfrak{sl}_{n+1}$ $(n\geq 2)$ with respect to the conjugation action of $SL(n+1)$ and $H$ is the stabilizer. Given an equivariant bundle $\mathcal{E}_V=SL(n+1)\times_H V^\vee$ corresponding to an irreducible representation $V$ of $H$ determined by a weight $(\lambda,\lambda_1,\lambda_2...\lambda_{n-1})$ in $\mathbb{Z}^n/\mathbb{Z}(0,1,...,1)$ with $\lambda_1\geq\lambda_2\geq...\geq\lambda_{n-1}$, the pushforward $i_*\mathcal{E}_V$ is maximal Cohen-Macaulay on orbit closure $\overline{\mathcal{O}_{\min}}=\mathcal{N}_{n+1,1}$ if and only if 
\begin{enumerate} 
\item If $\lambda_1=\lambda_2=\cdots=\lambda_{n-1}=\mu$, then $2\mu-n\leq\lambda\leq2\mu+n$.
\item If $\lambda_1>\lambda_{n-1}$, there is $\lambda_{n-1}+\lambda_M-M\leq\lambda\leq\lambda_1+\lambda_m+n-m$ and $$\lambda\in\displaystyle\bigcap_{\alpha\in A}\{\lambda_i+\alpha-i|1\leq i \leq n-1\}
$$
\end{enumerate}
where $M=\max\{i|1\leq i\leq n-1, \lambda_i=\lambda_1\}$, $m=\min\{i|1\leq i\leq n-1, \lambda_i=\lambda_{n-1}\}$ and $$A=\{N|N\in\mathbb{Z}, \lambda_{n-1}+1<N<\lambda_{1}+n-1, N\neq\lambda_{i}+n-i\ \text{for any}\ 1\leq i\leq n-1, N\neq\frac{\lambda+n}{2}\} $$ 
\end{theorem}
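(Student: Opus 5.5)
The plan is to reduce the maximal Cohen–Macaulay condition to a vanishing statement for the cohomology of $\mathcal{E}_V$ on the open orbit, and then to evaluate that cohomology with the Borel–Weil–Bott theorem. Write $X=\overline{\mathcal{O}_{\min}}=\mathcal{N}_{n+1,1}$. It has dimension $2n$ and an isolated singularity at $0$, and $i:\mathcal{O}_{\min}\hookrightarrow X$ denotes the inclusion. By Theorem \ref{localseqMCM}, which I use in the form of the local cohomology sequence, $i_*\mathcal{E}_V$ is maximal Cohen–Macaulay exactly when $H^j(\mathcal{O}_{\min},\mathcal{E}_V)=0$ for $1\le j\le 2n-2$. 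Depth $\ge 2$ is automatic for a pushforward from the complement of a point of codimension $\ge 2$.

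\textbf{Step 1: reduce to cohomology on the flag variety $F$.} Take $e=x\otimes y$ with $y(x)=0$. The line $[x]$ and the hyperplane $\ker y$ give a map $\mathcal{O}_{\min}\to F=SL(n+1)/P$, where $F\subset\mathbb{P}^n\times\mathbb{P}^{n*}$ is the partial flag variety of incident pairs and $P$ is the parabolic attached to the simple roots $\alpha_1,\alpha_n$. This map identifies $\mathcal{O}_{\min}$ with the complement of the zero section in the total space of $L=\mathcal{O}(-1,-1)|_F$. Hence $H\subset P$ and $P/H\cong\mathbb{C}^*$. The irreducible $H$-module $V$ factors through the reductive part $GL(n-1)\times\mathbb{C}^*$, with the weight $(\lambda;\lambda_1,\dots,\lambda_{n-1})$ taken modulo $\mathbb{Z}(0,1,\dots,1)$. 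Since the projection $p:\mathcal{O}_{\min}\to F$ is affine,
\[
H^j(\mathcal{O}_{\min},\mathcal{E}_V)=\bigoplus_{k\in\mathbb{Z}}H^j\bigl(F,\mathcal{E}_{W_k}\bigr),
\]
where $W_k$ is the irreducible $P$-module obtained from $V$ by twisting with the $k$-th power of the character defining $L$. Concretely, $W_k$ has $GL(n+1)$-weight $(a_k,\lambda_1,\dots,\lambda_{n-1},b_k)$. Here $a_k+b_k$ is fixed by the normalization and $a_k-b_k$ is an affine function of $\lambda$ and $k$ that changes by $2$ when $k$ changes by $1$. Fixing this dictionary precisely is the first concrete computation. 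It should explain why $\lambda$ appears with the shift $\tfrac{\lambda+n}{2}$ and why the bounds involve $2\mu\pm n$.

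\textbf{Step 2: apply Borel–Weil–Bott to each summand.} Add $\rho=(n,n-1,\dots,0)$ to get the sequence $(a_k+n,\ \lambda_1+n-1,\dots,\lambda_{n-1}+1,\ b_k)$. The middle block is strictly decreasing, so this sequence is singular exactly when $a_k+n$ or $b_k$ equals some $\lambda_i+n-i$, or when $a_k+n=b_k$. The last case is the source of the condition $N\neq\frac{\lambda+n}{2}$ in the definition of $A$. If the sequence is regular, the cohomology is concentrated in the single degree $\ell(w)$. This degree counts how far $a_k+n$ has to move down and $b_k$ has to move up, together with one more inversion if they cross each other. Since $\dim F=2n-1$, the condition becomes: for every $k$, either the sequence is singular, or $\ell(w)\in\{0,2n-1\}$. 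Length $0$ means $a_k+n>\lambda_1+n-1$ and $b_k<\lambda_{n-1}+1$. Length $2n-1$ means the reverse order, with $a_k+n$ at the bottom and $b_k$ at the top.

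\textbf{Step 3: translate into conditions on $\lambda$.} Because $a_k+b_k$ is fixed, $a_k+n$ runs through an arithmetic progression, and $b_k$ is determined by $a_k+n$. The set $A$ should be exactly the collection of intermediate positions $N$ strictly between $\lambda_{n-1}+1$ and $\lambda_1+n-1$ that the entry $a_k+n$ (or $b_k$) can occupy without itself causing a collision or the coincidence $a_k+n=b_k$. For each such $N$, the corresponding summand vanishes only if the partner entry collides with some $\lambda_i+n-i$. This collision requirement is the condition $\lambda\in\{\lambda_i+\alpha-i\}$ for every $\alpha\in A$.

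The outer inequalities $\lambda_{n-1}+\lambda_M-M\le\lambda\le\lambda_1+\lambda_m+n-m$ should come from the boundary values of $k$. At those values one entry lies just outside the middle block while the other lies inside it, and the only available collisions are with the blocks of equal $\lambda_i$ at the two ends; this is where $M$ and $m$ enter. In the case $\lambda_1=\dots=\lambda_{n-1}=\mu$, the set $A$ is empty and only the range $2\mu-n\le\lambda\le2\mu+n$ survives.

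\textbf{Main obstacle.} I expect the hardest part to be this bookkeeping: showing that the conditions over all $k$ are both necessary and sufficient, and in particular getting the parity and endpoint conventions right. To control it, I will first check the cases $n=2,3$ against the classification of divisors (the case $\lambda_i$ all equal should recover $-n\le k\le n$). Then I will organize the general argument by the position of $a_k+n$ relative to the strictly decreasing block.
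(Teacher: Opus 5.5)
Your plan is essentially the paper's proof. It uses Proposition~\ref{localseqMCM} and the affine map $q\colon G/H\to G/P$ with fibre $P/H\cong S/\mu_2\cong\mathbb{G}_m$, which splits $q_*\mathcal{E}_V$ into the $P$-bundles $\mathcal{E}(\tfrac{\lambda-j}{2},\lambda_1,\dots,\lambda_{n-1},\tfrac{\lambda+j}{2})$ for $j\equiv\lambda \ (\mathrm{mod}\ 2)$, so that $a_k+b_k=\lambda$. It then applies Borel--Weil--Bott with $\ell(w)\in\{0,2n-1\}$ and makes the same case split of the first entry $a_k+n$ and the last entry $b_k=\lambda-a_k$ against the block $\lambda_i+n-i$.

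One slip needs fixing. $H/U(H)=\{\operatorname{diag}(a,A,a) : a^2\det A=1\}$ is a double cover of $GL(n-1)$, not $GL(n-1)\times\mathbb{C}^*$; the latter is the Levi of $P$. So the weight lattice is $\mathbb{Z}^n/\mathbb{Z}(2,1,\dots,1)$, as in the body of the paper; the $(0,1,\dots,1)$ in the statement is a typo. This is what makes your dictionary $a_k+b_k=\lambda$ and the final criterion well defined.
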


This method can be generalized to the classification problem of matrix group equivariant maximal Cohen-Macaulay sheaves on $B_n$, $C_n$, $D_n$ type minimal orbit closures. We also cite the main results here:

\begin{theorem}[$C_n$ type, Theorem \ref{Ctypetheorem}]
Suppose $\mathcal{O}_{\min}$ is the minimal orbit in $\mathfrak{sp}_{2n}$ $(n\geq 2)$ with respect to the conjugation action of $Sp(2n)$ and $H$ is the stabilizer. An equivariant bundle $\mathcal{E}_V=Sp(2n)\times_H V^\vee$ corresponding to an irreducible representation $V$ of $H$ is determined by the weight $(\lambda_0,\lambda_1,\lambda_2...\lambda_{n-1})$ in $H$'s lattice $\mathbb{Z}/2\times\mathbb{Z}^{n-1}$. The pushforward $i_*\mathcal{E}_V$ is maximal Cohen-Macaulay on the orbit closure $\overline{\mathcal{O}_{\min}}$ if and only if 
for any element $k$ in the following set
$$A=\{N|N\in\mathbb{Z}, \ |N|<\lambda_{1}+n-1,\  N\neq 0,\   \ N\neq \pm(\lambda_{i}+n-i)\ \  \text{for any}\ \   1\leq i\leq n-1\}
$$
we have $k\not\equiv n-\lambda_0 (\operatorname{mod} 2)$.
\end{theorem}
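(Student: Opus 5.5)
The plan is to follow the same strategy as for type $A_n$ (Theorem \ref{SLMCM}). First I identify the geometry. For $\mathfrak{sp}_{2n}$ the minimal orbit is $\mathcal{O}_{\min}=\{v\otimes v: v\in\mathbb{C}^{2n}\setminus 0\}$, so $\overline{\mathcal{O}_{\min}}\cong\mathbb{C}^{2n}/\{\pm1\}$. Its stabilizer is $H=\{\pm1\}\ltimes(Sp(2n-2)\ltimes U)$, where $U$ is unipotent and the $\pm1$ records the sign of $v$. Irreducible representations of $H$ are therefore indexed by a character $\lambda_0\in\mathbb{Z}/2$ together with a dominant weight $(\lambda_1\ge\dots\ge\lambda_{n-1}\ge0)$ of $Sp(2n-2)$. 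This explains the lattice in the statement.

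Next I pull everything back along the double cover $\mathbb{C}^{2n}\setminus0\to\mathcal{O}_{\min}$ and use the projectivization $\mathcal{O}_{\min}\to\mathbb{P}^{2n-1}=Sp(2n)/P$, which is a $\mathbb{C}^*/\pm1$-bundle. Concretely, $\mathcal{E}_V$ becomes a $\mathbb{C}^*$-graded sheaf on $\mathbb{C}^{2n}\setminus 0$. Its sections are $\bigoplus_{k\equiv \lambda_0 \,(2)}$ of the sections of the homogeneous bundles on $\mathbb{P}^{2n-1}$ with $P$-weight $(k,\lambda_1,\dots,\lambda_{n-1})$. Only the parity of the $\mathbb{C}^*$-weight is fixed, because the $\pm1$ component is the only part of the torus weight that $H$ sees.

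By the local cohomology criterion (Theorem \ref{localseqMCM}), $i_*\mathcal{E}_V$ is maximal Cohen--Macaulay exactly when $H^j(\mathcal{O}_{\min},\mathcal{E}_V)=0$ for $1\le j\le 2n-2$. The cohomology in degree $0$ is reflexive and automatically fine, and the top degree is unrestricted. Pushing to $\mathbb{P}^{2n-1}$ turns this into the condition $H^j(\mathbb{P}^{2n-1},\mathcal{E}_{(k,\lambda)})=0$ for all $k\equiv\lambda_0$ (or the appropriate shift by the normalization conventions) and all $1\le j\le 2n-2$.

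I then apply Borel--Weil--Bott for $Sp(2n)$. Add $\rho=(n,n-1,\dots,1)$ to get $(k+n,\lambda_1+n-1,\dots,\lambda_{n-1}+1)$; the type-$C$ Weyl group acts by signed permutations.
\begin{itemize}
\item The weight is singular, so all cohomology vanishes, iff $|k+n|$ equals some $\lambda_i+n-i$ or $k+n=0$.
\item Otherwise exactly one cohomology group is nonzero, in degree equal to the number of inversions. Degree $0$ corresponds to $k+n>\lambda_1+n-1$.
\item Degree $2n-1$ corresponds to $k+n<-(\lambda_1+n-1)$, i.e.\ the entry is negated and moved past all others: length $=2n-1$ in the relevant coset.
\item Every intermediate regular value $N=k+n$ with $|N|<\lambda_1+n-1$, $N\neq0$, $N\neq\pm(\lambda_i+n-i)$ gives nonzero cohomology in some degree in $[1,2n-2]$.
\end{itemize}
Hence the maximal Cohen--Macaulay condition is precisely that no such $N$ occurs with the allowed parity. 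That means every $N\in A$ satisfies $N\not\equiv n-\lambda_0 \pmod 2$ (since $k=N-n$ must avoid $\equiv\lambda_0$, up to the sign convention from dualizing $V^\vee$). This gives the stated criterion.

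The main obstacle is bookkeeping rather than conceptual. I need to confirm that the parabolic for $\mathbb{P}^{2n-1}$ has Levi $\mathbb{C}^*\times Sp(2n-2)$, so that the minimal-length coset representatives are exactly the signed moves of the first coordinate, with lengths filling $0,\dots,2n-1$. I also need to pin down the sign and parity conventions: the dual $V^\vee$, the identification of the $\mathbb{Z}/2$ character with $k$ mod $2$, and the residue $n-\lambda_0$. I would check these conventions against the trivial representation, where $\mathcal{O}_{\overline{\mathcal{O}_{\min}}}$ must be maximal Cohen--Macaulay.
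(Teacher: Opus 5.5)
Your proposal is correct and follows essentially the paper's route: push $\mathcal{E}_V$ forward along the affine $\mathbb{C}^*/\mu_2$-bundle $G/H\to G/P=\mathbb{P}^{2n-1}$, split it into irreducible homogeneous bundles of $P$-weight $(\pm j,\lambda_1,\dots,\lambda_{n-1})$ with $j\equiv\lambda_0 \pmod 2$, and then apply Borel--Weil--Bott using the signed-permutation length count. Your convention worry is harmless, since $n+\lambda_0\equiv n-\lambda_0 \pmod 2$, and your identification $\overline{\mathcal{O}_{\min}}\cong\mathbb{C}^{2n}/\{\pm1\}$ is only extra geometric context that the paper does not need.
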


\begin{theorem}[$D_n$ type, Theorem \ref{Dtypetheorem}]
    Suppose $\mathcal{O}_{\min}$ is the minimal orbit in $\mathfrak{so}_{2n}$ $(n\geq 3)$ with respect to the conjugation action of $SO(2n)$ and $H$ is the stabilizer. An equivariant bundle $\mathcal{E}_V=SO(2n)\times_H V^\vee$ corresponding to an irreducible representation $V$ of $H$ is determined by a dominant weight $(\lambda,\lambda_1,\lambda_2...\lambda_{n-2})$ with $\lambda\geq 0$ and $\lambda_1\geq...\geq\lambda_{n-3}\geq|\lambda_{n-2}|$ in $H$'s lattice $\mathbb{Z}^{n}$. The pushforward $i_*\mathcal{E}_V$ is maximal Cohen-Macaulay on the orbit closure $\overline{\mathcal{O}_{\min}}$ if and only if
$$\lambda\in\bigcap_{k\in A}\left(\bigcup_{i=1}^{n-2}\{k-\lambda_i+i,k+2n-4+\lambda_i-i\}\cup\{2k+2n-3\}\right)$$
where 
$$A=\{N|N\in\mathbb{Z},\  -\lambda_1-2n+4\leq N\leq \lambda+\lambda_1-1, \ N\neq\lambda_i-i-1,\ N\neq-\lambda_i-2n+3+i\ \text{for all}\ i\}
$$
\end{theorem}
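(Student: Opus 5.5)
Our plan for the $D_n$ statement is to follow the same approach as in type $A$ (Theorem \ref{SLMCM}): turn the maximal Cohen--Macaulay condition into a vanishing statement about local cohomology at the cone point, and then evaluate that statement with Borel--Weil--Bott.

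\textbf{Step 1: reduction to cohomology on the projectivized orbit.} Write $X=\overline{\mathcal{O}_{\min}}\subset\mathfrak{so}_{2n}$. It has dimension $2(2n-3)=4n-6$, and $\{0\}$ is its only singular point. By Theorem \ref{localseqMCM}, the reflexive sheaf $i_*\mathcal{E}_V$ is maximal Cohen--Macaulay if and only if
\[
H^j(\mathcal{O}_{\min},\mathcal{E}_V)=0 \quad\text{for } 1\le j\le 4n-8 .
\]
Now $\mathcal{O}_{\min}$ is the complement of the zero section in the total space of $\mathcal{O}(-1)$ over $P=\mathbb{P}(\mathcal{O}_{\min})$. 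Here $P=SO(2n)/P_2$ is the isotropic Grassmannian of $2$-planes, of dimension $4n-7$, and the stabilizer is $H=[P_2,P_2]\cdot(\text{part of the torus})$. Taking invariants under $\mathbb{C}^*$ gives
\[
H^j(\mathcal{O}_{\min},\mathcal{E}_V)=\bigoplus_{k\in\mathbb{Z}} H^j\bigl(P,\ \mathcal{V}\otimes\mathcal{O}(k)\bigr),
\]
where $\mathcal{V}$ is the irreducible homogeneous bundle on $P$ obtained by extending $V$ to the Levi $GL_2\times SO(2n-4)$. The weight $\lambda$ is the $\mathbb{C}^*$-weight. The extension is done so that the $GL_2$ factor sees $V$ through the $SL_2$ part, with highest weight $\lambda$ (this is why $\lambda\ge 0$ is assumed), while $SO(2n-4)$ sees $(\lambda_1,\dots,\lambda_{n-2})$. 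Twisting by $\mathcal{O}(k)$ shifts the $GL_2$ weight by $(k,k)$ up to normalization. So the $GL_2$ part of the Levi weight is, up to a fixed affine change of variables, $(k+\lambda, k)$.

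\textbf{Step 2: Borel--Weil--Bott.} For each $k$, add $\rho=(n-1,n-2,\dots,0)$ to the full weight
\[
(k+\lambda,\,k,\,\lambda_1,\dots,\lambda_{n-2}).
\]
The cohomology vanishes in every degree exactly when the result is singular for $W(D_n)$, meaning two entries satisfy $a_i=\pm a_j$. Otherwise the cohomology is concentrated in the single degree $\ell(w)$, where $w$ is the Weyl group element making the weight dominant. The condition that must fail is $1\le \ell(w)\le 4n-8=\dim P-1$; in other words, only $H^0$ and $H^{\dim P}$ are allowed. The entries $\lambda_i+n-2-i$ coming from the Levi part are already strictly decreasing and are distinct up to sign. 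So singularity can only come from one of the following coincidences:
\begin{itemize}
\item $k+\lambda+n-1=\pm(\lambda_i+n-2-i)$;
\item $k+n-2=\pm(\lambda_i+n-2-i)$;
\item $k+\lambda+n-1=\pm(k+n-2)$.
\end{itemize}
The last one gives the term $2k+2n-3$, up to reindexing $k$.

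\textbf{Step 3: identifying the set $A$ and the main obstacle.} We must determine for which $k$ the weight is regular with length strictly between $0$ and the top length. For $k\gg0$ the weight is dominant, giving $H^0$. For $k\ll0$, Serre duality puts the cohomology in the top degree. Hence the bad values of $k$ lie in a bounded window, which after reindexing should be $-\lambda_1-2n+4\le N\le \lambda+\lambda_1-1$.

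Among these, the values excluded from $A$ are the $N$ for which the second entry alone already collides with a Levi entry, namely $N=\lambda_i-i-1$ or $N=-\lambda_i-2n+3+i$. The remaining $N$ must each be rescued by a collision of the first entry, and that is exactly the condition that $\lambda$ lie in the displayed union for every $N\in A$.

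The main obstacle is this bookkeeping. We need to check that every regular weight in the window has length strictly between $0$ and $4n-7$, and that the window endpoints are sharp. This requires tracking how the two $GL_2$ entries move past the Levi entries, including sign flips, since $W(D_n)$ allows even sign changes and the sign of $\lambda_{n-2}$ matters here. We would first handle generic $(\lambda_i)$ by counting inversions, and then treat the boundary cases where $\lambda_{n-2}=0$ or $k+n-2$ is near $0$ separately. As a sanity check, we would compare the answer with the type $A$ computation and with small cases such as $n=3$, where $D_3=A_3$.
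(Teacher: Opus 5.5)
Your Steps 1 and 2 are the paper's argument in geometric language. The $\mathbb{C}^*$-bundle $\mathcal{O}_{\min}\to OG(2,2n)$ is the paper's affine quotient $q\colon G/H\to G/P$ with fibre $P/H\cong S/\mu_2$. Your summands $\mathcal{V}\otimes\mathcal{O}(k)$ are the paper's $\mathcal{E}(\frac{\lambda-j}{2},\frac{-\lambda-j}{2},\lambda_1,\dots,\lambda_{n-2})$, with $N:=k+\lambda=\frac{\lambda-j}{2}$.

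The gap is Step 3, which is the actual content of the theorem and is only asserted. You say the window ``should be'' $-\lambda_1-2n+4\le N\le\lambda+\lambda_1-1$, defer the length analysis, and expect case distinctions. The missing piece is a short inversion count. Write $\Lambda=(N+n-1,\,N-\lambda+n-2,\,\Lambda_3,\dots,\Lambda_n)$ with $\Lambda_b=\lambda_{b-2}+n-b$. The roots $e_a\pm e_b$ with $3\le a<b$ pair positively with $\Lambda$, because $\Lambda_3>\dots>\Lambda_{n-1}>|\Lambda_n|$. The root $e_1-e_2$ also pairs positively, because $\lambda\ge 0$. Hence for regular $\Lambda$
\[
\ell(w)=\sum_{a=1,2}\Bigl(\#\{b\ge 3:\Lambda_a<\Lambda_b\}+\#\{b\ge 3:\Lambda_a<-\Lambda_b\}\Bigr)+1_{\Lambda_1+\Lambda_2<0}\le 4(n-2)+1=4n-7 .
\]
By dominance $\Lambda_3=\max_{b\ge3}|\Lambda_b|$, and $\Lambda_1>\Lambda_2$. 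So equality holds iff $\Lambda_1<-\Lambda_3$, i.e.\ $N<-\lambda_1-2n+4$. Likewise $\ell(w)=0$ iff $\Lambda_2>\Lambda_3$, i.e.\ $N>\lambda+\lambda_1-1$. This gives the window, and shows that regular weights inside it have $1\le\ell(w)\le 4n-8$. No separate treatment of $\lambda_{n-2}=0$ or of small $k$ is needed.

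Second, your description of $A$ has the roles of the two $GL_2$-entries reversed, so the reindexing is not cosmetic. In your own variable $k$, the $\lambda$-free entry is the second one, $k+n-2$. It collides with $\pm(\lambda_i+n-2-i)$ at $k=\lambda_i-i$ and $k=-\lambda_i-2n+4+i$, not at the values you quote. ``Rescue by the first entry'' then gives an equivalent but differently indexed criterion, $\lambda\in\{\lambda_i-i-1-k,\,-\lambda_i-2n+3+i-k,\,-2k-2n+3\}$, over a window shifted by $-\lambda$. That is not the displayed set.

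The displayed set only appears in the variable $N=k+\lambda$, where the first entry $N+n-1$ is the $\lambda$-free one. There, $A$ is the set of $N$ in the window with $N+n-1\neq\pm(\lambda_i+n-2-i)$. For such $N$, singularity must come either from $N-\lambda+n-2=\pm(\lambda_i+n-2-i)$ or from $\Lambda_1+\Lambda_2=0$; the case $\Lambda_1=\Lambda_2$ would force $\lambda=-1$. This is exactly $\lambda\in\{N-\lambda_i+i,\;N+2n-4+\lambda_i-i,\;2N+2n-3\}$, as in the statement.
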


\begin{theorem}[$B_n$ type, Theorem \ref{Btypetheorem}]
    Suppose $\mathcal{O}_{\min}$ is the minimal orbit in $\mathfrak{so}_{2n+1}$ $(n\geq 3)$ with respect to the conjugation action of $SO(2n+1)$ and $H$ is the stabilizer. An equivariant bundle $\mathcal{E}_V=SO(2n+1)\times_H V^\vee$ corresponding to an irreducible representation $V$ of $H$ is determined by a dominant weight $(\lambda,\lambda_1,\lambda_2...\lambda_{n-2})$ with $\lambda\geq 0$ and $\lambda_1\geq...\geq\lambda_{n-3}\geq\lambda_{n-2}\geq0$ in $H$'s lattice $\mathbb{Z}^{n}$. The pushforward $i_*\mathcal{E}_V$ is maximal Cohen-Macaulay on the orbit closure $\overline{\mathcal{O}_{\min}}$ if and only if 
for any element $k$ in the following set
$$\lambda\in\bigcap_{k\in A}\left(\bigcup_{i=1}^{n-2}\{k-\lambda_i+i,k+\lambda_i+2n-3-i\}\cup\{2k+2n-2\}\right)$$
where 
$$A=\{N|N\in\mathbb{Z}, -\lambda_1-2n+3\leq N\leq\lambda+\lambda_1-1, N\neq \lambda_i-1-i,\ N\neq-\lambda_i-2n+2+i\ \text{for all}\ i\}
$$
\end{theorem}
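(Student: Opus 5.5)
We prove the $B_n$ statement by the method used for types $A$, $C$ and $D$: reduce the maximal Cohen–Macaulay property of $i_*\mathcal{E}_V$ to cohomology vanishing on the projectivized orbit, then compute that cohomology with Borel–Weil–Bott.

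\textbf{Step 1: reduction to local cohomology.} The closure $\overline{\mathcal{O}_{\min}}$ has dimension $d=4n-4$, and $0$ is its only singular point. By the local cohomology criterion (Theorem \ref{localseqMCM}), $i_*\mathcal{E}_V$ is maximal Cohen–Macaulay iff
$$H^j(\mathcal{O}_{\min},\mathcal{E}_V)=0 \quad\text{for } 1\le j\le d-2.$$
The orbit $\mathcal{O}_{\min}$ is a $\mathbb{C}^*$-bundle over the projectivization $\mathbb{P}(\mathcal{O}_{\min})=SO(2n+1)/P$. Here $P$ is the parabolic of the highest root $\theta=e_1+e_2$, with Levi factor $\mathbb{C}^*\times GL(2)'\times SO(2n-3)$. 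Decomposing into $\mathbb{C}^*$-weights gives
$$H^j(\mathcal{O}_{\min},\mathcal{E}_V)=\bigoplus_{m\in\mathbb{Z}}H^j\big(G/P,\mathcal{E}_{V}\otimes\mathcal{O}(m)\big),$$
and the base $G/P$ has dimension $d-1=4n-5$. The irreducible $H$-module $V$, with weight $(\lambda,\lambda_1,\dots,\lambda_{n-2})$, extends to an irreducible $P$-module whose twists by $\mathcal{O}(m)$ form a one-parameter family of $P$-weights. In the coordinates $e_1,\dots,e_n$ these are
$$\mu_m=(a_m,\,b_m,\,\lambda_1,\dots,\lambda_{n-2}),\qquad a_m-b_m=\lambda,\quad a_m+b_m \text{ linear in } m.$$
The exact normalization of $a_m+b_m$ comes from the relation between $H$ and $P$, and we would fix it as in the $D_n$ case.

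\textbf{Step 2: Borel–Weil–Bott.} Let $\rho=(n-\tfrac12,n-\tfrac32,\dots,\tfrac12)$. By Bott's theorem, $H^j(G/P,\mathcal{E}_{\mu_m})\ne0$ only when $\mu_m+\rho$ is regular, meaning all $|\text{entries}|$ are nonzero and pairwise distinct. In that case the cohomology sits in the single degree $j=\ell(w)$, the number of positive roots made negative by the sorting Weyl element $w$.

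The middle entries $\lambda_i+n-1-i-\tfrac12$ are fixed. The two moving entries are $a_m+n-\tfrac12$ and $b_m+n-\tfrac32$. Setting $k$ equal to $b_m$ up to an affine shift, regularity fails exactly when one of the following holds:
\begin{itemize}
\item $b$ collides with $\pm$ a middle entry, giving $\lambda=k-\lambda_i+i$ or $\lambda=k+\lambda_i+2n-3-i$;
\item $a$ and $b$ have opposite absolute values, giving $\lambda=2k+2n-2$;
\item $a$ or $b$ equals $\pm$ a middle entry or is zero.
\end{itemize}
The conditions on $b$ alone, namely $k\ne\lambda_i-1-i$ and $k\ne-\lambda_i-2n+2+i$, are exactly the exclusions defining $A$.

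\textbf{Step 3: locating the degrees.} We must show that the vanishing requirement for $1\le j\le 4n-6$ fails precisely for $k\in A$ with $\mu+\rho$ regular. The extreme degrees $j=0$ and $j=4n-5$ are permitted. So we need to check that $\ell(w)\in\{0,4n-5\}$ occurs exactly when $k$ lies outside the window $-\lambda_1-2n+3\le k\le\lambda+\lambda_1-1$:
\begin{itemize}
\item for $k$ large, both moving entries are positive and exceed all middle entries, so $w=1$;
\item for $k$ very negative, $w$ is the longest element relative to $P$ (the Serre-dual situation), with $\ell=\dim G/P$.
\end{itemize}
For $k$ inside the window, either $\mu+\rho$ is singular, which is exactly the membership condition on $\lambda$ in the union, or the length is strictly between the extremes and must be forbidden. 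This yields the stated intersection.

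\textbf{Main obstacle.} The hardest step is the length count in Step 3. We must verify that every regular $k$ strictly inside the window gives $0<\ell(w)<4n-5$, and that the window endpoints are sharp. We would count inversions directly: $\ell(w)$ equals the number of middle entries crossed by the two moving entries, plus sign changes weighted by type-$B$ conventions. We would do this by comparing with the $D_n$ computation, which differs only by the shift $\rho_B=\rho_D+\tfrac12(1,\dots,1)$ and the presence of short roots. The short roots account for the replacements $2n-4\mapsto 2n-3$ and $2k+2n-3\mapsto 2k+2n-2$.
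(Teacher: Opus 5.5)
Your route is the paper's. You identify $\mathcal{O}_{\min}\to G/P$ as a $\mathbb{C}^*$-bundle, which the paper phrases as the affine map $G/H\to G/P$ with fibre $S/\mu_2$. You split the pushforward into $P$-bundles of weights $(a,b,\lambda_1,\dots,\lambda_{n-2})$ with $a-b=\lambda$. Then you require every $\Lambda=\mu+\rho$ to be singular or to have $\ell(w)\in\{0,4n-5\}$. No normalization of $a_m+b_m$ needs fixing: $\mathcal{O}(1)$ shifts weights by $\pm\theta=\pm(e_1+e_2)$, so every integer pair with $a-b=\lambda$ occurs. The extension of $V$ to $P$ does need the fact you only assert: $S$ centralizes the Levi of $H$, and $S\cap H=\mu_2$ acts on $V$ by $(-1)^\lambda$.

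There is also a slip in Step 2. The parameter must be the first moving coordinate, $k=a$, so that $b=k-\lambda$. The exclusions defining $A$ are the collisions $a+n-\tfrac12=\pm(\lambda_i+n-\tfrac32-i)$. The union comes from $b+n-\tfrac32=\pm(\lambda_i+n-\tfrac32-i)$ and from $a+b+2n-2=0$. As written, you attribute both families to $b$, which is inconsistent. Likewise, the changes $2n-4\mapsto 2n-3$ and $2k+2n-3\mapsto 2k+2n-2$ relative to $D_n$ come from $\rho_B=\rho_D+\tfrac12(1,\dots,1)$, not from the short roots. The short roots only add two inversions, $a+n-\tfrac12<0$ and $b+n-\tfrac32<0$, raising $\dim G/P$ from $4n-7$ to $4n-5$. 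The nonvanishing conditions they impose are vacuous, since every entry of $\Lambda$ is a half-integer.

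More importantly, the step you leave as the ``main obstacle'' is exactly where the window in the theorem comes from, and you never derive its endpoints. It is a two-line check, not an inversion count or a comparison with $D_n$.
\begin{itemize}
\item Write $\Lambda=(A,B,m_1,\dots,m_{n-2})$ with $A-B=\lambda+1>0$ and $m_1>\dots>m_{n-2}>0$.
\item Levi roots never pair negatively with $\Lambda$. The remaining $4n-5$ positive coroots pair negatively exactly when $A<\pm m_i$, $B<\pm m_i$, $A+B<0$, $A<0$ or $B<0$.
\item For regular $\Lambda$, $\ell(w)=0$ iff $B>m_1$, i.e.\ $k>\lambda+\lambda_1-1$. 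This single inequality forces all the others because $A>B$.
\item For regular $\Lambda$, $\ell(w)=4n-5$ iff $A<-m_1$, i.e.\ $k<-\lambda_1-2n+3$. This single inequality forces all the others because $B<A$.
\end{itemize}
This gives both directions at once. Outside the window every $\Lambda$ is harmless. Inside it $\Lambda$ must be singular, which means $A=\pm m_i$, $B=\pm m_i$ or $A=-B$, since there are no zero entries and $A\neq B$. With this filled in, your argument coincides with the paper's proof.
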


Finally, in order to generalize this story to exceptional cases, we extend the study of equivariant maximal Cohen-Macaulay sheaves on minimal nilpotent orbit closures from the classical cases to a uniform, coordinate-free framework in \ref{abstractanalysis}. Starting from a highest-root vector $e_\theta$ in a simple Lie algebra $\mathfrak{g}$, we describe the minimal orbit as $\mathcal{O}_{\min}=G/H$, where $H$ is the stabilizer of $e_\theta$, and relate it to the parabolic quotient $G/P$ obtained by stabilizing the line $\mathbb{C}e_\theta$. We show the reason why it's always possible to find a $1$-dimensional torus $S$ such that $SH=P$, $S\cap H\cong\mu_2$, and that $S$ commutes with the Levi factor of $H$. We also show how to identify the simply connected Levi factor of $H$ and its root system $\Phi^\natural$ through the affine Dynkin diagram. Using these abstract arguments, we generalize our $SO$-equivariant results to $\operatorname{Spin}$-equivariant cases and state them there. Readers who are primarily interested in the abstract framework may proceed directly to Section \ref{abstractanalysis}. Then, in Section \ref{exceptional} we explain the favorable structural properties of the exceptional cases. Building on this analysis, Section \ref{exceptionalclassifications} applies the method to the exceptional types $E_6,E_7,E_8,F_4$, and $G_2$, producing explicit numerical lists of dominant weights whose associated equivariant sheaves push forward to maximal Cohen-Macaulay sheaves on $\overline{\mathcal{O}_{\min}}$.

\textbf{Conventions.} Throughout this paper, we work over the field $\mathbb{C}$. All varieties are assumed to be irreducible and quasi-projective unless otherwise stated. 

\textbf{Acknowledgments.} I would like to thank Professor David Eisenbud for his valuable comments. Thanks Professor Vera Serganova for inspiring me to consider the equivariant case. Thanks also Wenqing Wei, Jerry Yang and Peisheng Yu for useful discussions.

\section{Singularities and Maximal Cohen-Macaulay sheaves}

In this chapter, we work on constructing maximal Cohen-Macaulay sheaves on some singular varieties. Our main interest is in those varieties with isolated singularities. If the singularity is also \textit{symplectic}, while some methods are developed in \cite{xu2026constructingmaximalcohenmacaulaysheaves} for constructing purpose, in section 3 we will state a criterion to determine if a vector bundle on the resolution of an isolated symplectic singularity pushes forward to a maximal Cohen-Macaulay sheaf. 

As all symplectic isolated singularities with symplectic resolutions are very close to $A_n$ singularities (we explain this point in 2.2), special treatment should be applied to these nilpotent orbit singularities. If one again considers the equivariant sheaves with respect to conjugation action of $SL(n)$, a full classification is possible, at least for those sheaves induced by an irreducible representation of the stabilizer, which is the main result in the following chapters. This also generalizes to those minimal orbits in other types of semisimple Lie algebras, and therefore it is worth briefly discussing the structures of those isolated singularities in the minimal orbit closures here too. One should notice that although all minimal orbits have symplectic singularities, only $A_n$'s minimal orbits admit symplectic resolutions (cf. \cite{Fu2003Symplectic}).
\subsection{Symplectic Singularities}
We start with the definition of symplectic singularity.
\begin{definition}[Symplectic singularity]
A variety $X$ has a \emph{symplectic singularity} at a point $p \in X$ if there exists an open neighborhood $U \subset X$ of $p$ such that:
\begin{enumerate}
\item[(a)] $U$ is normal; 
\item[(b)] The smooth locus $U_{\mathrm{reg}}$ admits an algebraic symplectic $2$-form $\varphi$;
\item[(c)] For any resolution of singularities $f : \widetilde{U} \to U$, the pullback $f^{*}\varphi$ defined on $f^{-1}(U_{\mathrm{reg}})$ extends to an algebraic $2$-form on $\widetilde{U}$.
\end{enumerate}
\end{definition}
In this section, we assume $X=\operatorname{Spec}\mathcal{O}$ is an affine singular variety with only symplectic singularities and $\pi: \widetilde{X}\rightarrow X$ is a resolution of $X$. We assume this resolution is \textit{symplectic}, i.e., there is an algebraic symplectic form on $X^{reg}$ which extends to a global symplectic form on $\widetilde{X}$. One can show that in the case $X$ is affine this is the same as saying $\pi: \widetilde{X}\rightarrow X$ is a \textit{crepant} resolution, i.e. the pullback of canonical class of $X$ is the canonical class of $\widetilde{X}$.

An important property of symplectic singularities is 

\begin{proposition}[\cite{Beauville2000SymplecticSingularities} 1.3]\label{Gorat}
A symplectic singularity is rational Gorenstein.
\end{proposition}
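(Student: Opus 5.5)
The plan is to prove rationality and the Gorenstein property separately. Both are local, so I fix a point $p$ and a neighborhood $U$ as in the definition: $U$ is normal, $U_{\mathrm{reg}}$ carries a symplectic form $\varphi$, and $\varphi$ extends along resolutions. Write $\dim U = 2n$ and take a resolution $f:\widetilde U\to U$.

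\textbf{Step 1 (canonical class).} The form $\omega=\varphi^{\wedge n}$ is a nowhere vanishing section of $K_{U_{\mathrm{reg}}}$, so $K_{U_{\mathrm{reg}}}$ is trivial. Since $U$ is normal, its singular locus has codimension at least $2$, and hence $K_U$ is Cartier and trivial.

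\textbf{Step 2 (pulling back).} Condition (c) says $f^*\varphi$ extends to a regular $2$-form on $\widetilde U$. Therefore $f^*\omega=(f^*\varphi)^{\wedge n}$ is a regular section of $K_{\widetilde U}$, which gives
$$K_{\widetilde U}=f^*K_U+\textstyle\sum a_iE_i \quad\text{with } a_i\ge 0.$$
So $U$ has canonical singularities, and in particular log terminal ones.

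\textbf{Step 3 (rationality).} By Elkik's theorem (or Kawamata--Viehweg vanishing applied to $f$), canonical singularities are rational. Concretely, $R^if_*\mathcal O_{\widetilde U}=R^if_*\omega_{\widetilde U}$ because $K_{\widetilde U}$ is effective and exceptional. Grauert--Riemenschneider vanishing kills this for $i>0$, and normality gives $f_*\mathcal O_{\widetilde U}=\mathcal O_U$.

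\textbf{Step 4 (Gorenstein).} Rational singularities are Cohen--Macaulay by Kempf's criterion. A Cohen--Macaulay variety whose dualizing sheaf is invertible is Gorenstein, and by Step 1 the dualizing sheaf here is $\omega_U\cong\mathcal O_U$.

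The main obstacle is Step 3. It is the only step that relies on deep vanishing theorems. Everything else is a formal consequence of the extension condition and normality.
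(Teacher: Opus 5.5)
Your outline is the standard argument, and it is essentially Beauville's proof of his Proposition 1.3, which the paper only cites. The chain is: $\varphi^{\wedge n}$ trivializes $K_U$; condition (c) makes the singularities canonical; Elkik's theorem gives rationality; rational singularities are Cohen--Macaulay; and $\omega_U\cong\mathcal O_U$ then gives Gorenstein. Steps 1, 2 and 4 are fine, and Step 3 is fine as long as it rests on the citation of Elkik's theorem.

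The ``concrete'' justification you add in Step 3 is wrong, however. Write $\omega_{\widetilde U}\cong\mathcal O_{\widetilde U}(E)$ with $E=\operatorname{div}(f^*\omega)\ge 0$ exceptional. Effectivity and exceptionality of $E$ give only $f_*\mathcal O_{\widetilde U}=f_*\mathcal O_{\widetilde U}(E)=\mathcal O_U$, by normality; they say nothing about $R^if_*$ for $i>0$. For example, if $f$ is the blow-up of a point of $\mathbb A^2$ with exceptional curve $C$, then $R^1f_*\mathcal O=0$ but $R^1f_*\mathcal O(2C)\cong H^1(\mathbb P^1,\mathcal O(-2))\neq 0$. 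In your situation the equality $R^if_*\mathcal O_{\widetilde U}=R^if_*\omega_{\widetilde U}$ is exactly the rationality you want to prove, so it cannot be obtained this way.

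What your data actually give is a splitting. By Grauert--Riemenschneider, $\mathbf Rf_*\omega_{\widetilde U}\cong f_*\mathcal O_{\widetilde U}(E)=\mathcal O_U$, so the composite $\mathcal O_U\to\mathbf Rf_*\mathcal O_{\widetilde U}\to\mathbf Rf_*\mathcal O_{\widetilde U}(E)\cong\mathcal O_U$ is the identity. Apply $\mathbf R\mathcal{H}om(-,\omega_U^\bullet)$ and Grothendieck duality. Then $\omega_U^\bullet$ becomes a direct summand of $\mathbf Rf_*\omega_{\widetilde U}[2n]\cong\mathcal O_U[2n]$, with the trace map as retraction. Since $U$ is connected, $\mathcal O_U[2n]$ has no nontrivial direct summands, so the trace is an isomorphism and $\omega_U^\bullet\cong\mathcal O_U[2n]$. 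Hence $U$ is Cohen--Macaulay and Gorenstein, and dualizing back shows $\mathcal O_U\to\mathbf Rf_*\mathcal O_{\widetilde U}$ is an isomorphism. This is Kov\'acs' splitting criterion specialized to your case, and it replaces both the faulty sentence and Step 4.
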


Therefore, by the Gorenstein property, $\mathcal{O}_{X}$ is the dualizing sheaf on $X$, and by the definition of rational singularities, we have $\mathbf{R}\pi_{*}\mathcal{O}_{\widetilde{{X}}}=\mathcal{O}_{X}$, or equivalently, $R^{>0}\pi_{*}\mathcal{O}_{\widetilde{X}}=0$. The natural map $\mathcal{O}_{X}\rightarrow\mathcal{O}_{\widetilde{X}}$ is an isomorphism. Since $X$ is assumed to be affine, this is also equivalent to saying $H^{>0}(\mathcal{O}_{\widetilde{X}})=0$ and $H^{0}(\mathcal{O}_{\widetilde{X}})=H^{0}(\mathcal{O}_{{X}})$.

There are many examples of symplectic resolutions:

\begin{example}[ADE surface singularities]
There is a well known family of symplectic singularities: the \textit{rational double points} or \textit{ADE singularities}, and they are constructed as follows. Suppose $X=\mathbb{C}^2/G$ where $G$ is a finite group in $SL(2,\mathbb{C})=Sp(2,\mathbb{C)}$. Then $X$ inherits a symplectic form from $\mathbb{C}^2$, and there is one isolated singular point on $X$ that is symplectic. In \cite{ItoNakamura1996}, a canonical resolution is given as the $G$-Hilbert scheme: 
$$
G\text{-Hilb}(\mathbb{C}^2)\rightarrow\mathbb{C}^2/G
$$

\end{example}
\begin{example}
The generalization of ADE resolutions are the symplectic resolutions given by Nakajima quiver varieties constructed in \cite{Nakajima1994Instantons}. There Nakajima defined a resolution $\mathcal{M}^{\theta}_\lambda(v,w)\rightarrow  \mathcal{M}^{0}_\lambda(v,w)$ for each quiver $Q$ and dimension vector $v,w$ by the natural map between the symplectic reductions of ordinary quiver varieties (from GIT quotient to categorical quotient). When taking $Q$ to be the McKay graph of a finite group $G$ in $SL(2,\mathbb{C})$ and take $v$ to be the dimension vector of different irreducible representations of $G$, this resolution gives the ADE surface resolution above. One can refer \cite{Ginzburg2010Nakajima} for more detailed constructions or \cite{xu2026constructingmaximalcohenmacaulaysheaves} for a much shorter introduction.
\end{example}

\begin{example}[Springer resolution]
Let $G$ be a connected complex semisimple algebraic group with Lie algebra $\mathfrak{g}$, and let $\mathcal{N}\subset\mathfrak{g}$ be the nilpotent cone. Let $\mathcal{B}$ be the flag variety of $G$. The cotangent bundle $T^*\mathcal{B}$ carries its canonical symplectic form and admits a proper $G$-equivariant morphism
\[
\pi:T^*\mathcal{B}\to\mathcal{N},
\qquad
(x,\mathfrak{b})\mapsto x,
\]
where we identify
\[
T^*\mathcal{B}=\{(x,\mathfrak{b})\in\mathfrak{g}\times\mathcal{B}\mid x\in\mathfrak{b},\; x\text{ nilpotent}\}.
\]
The map $\pi$ is proper, birational, and an isomorphism over the regular nilpotent orbit. Hence, $\pi$ is a resolution of singularities of $\mathcal{N}$. Moreover, the symplectic form on $T^*\mathcal{B}$ restricts to the Kostant--Kirillov form on smooth nilpotent orbits, so $\mathcal{N}$ has symplectic singularities in the sense of Beauville, and $\pi$ is a symplectic resolution.
\end{example}

The $A_n$ type Springer resolution can be constructed as Nakajima quiver varieties of $A_n$ Dynkin quiver with one single framing vector space at one end, the details can be found in the well-known paper \cite{Nakajima1994Instantons}.

\subsection{Minimal nilpotent orbit closures}\label{miniorbit}
Minimal orbits closures $\overline{\mathcal{O}_{\min}}$ in simple Lie algebras are important examples and standard analytic model among all isolated symplectic singularities with smooth projective cones, as the following theorem from \cite{Beauville2000SymplecticSingularities} shows,
\begin{proposition}\label{standardisosymp}
Let $(X,p)$ be an isolated symplectic singularity whose projective tangent cone is smooth.
Then $(X,p)$ is analytically isomorphic to $(\overline{\mathcal{O}}_{\min},0)$
for some simple complex Lie algebra.
\end{proposition}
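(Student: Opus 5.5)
This statement is Beauville's theorem (\cite{Beauville2000SymplecticSingularities}, Theorem 2). The plan follows his argument: deform the singularity to its tangent cone, and then identify tangent cones that are cones over smooth projective varieties with a contact structure. The setup is as follows. Let $(X,p)$ be isolated symplectic and let $Y=\mathbb{P}(C_pX)$ be the smooth projective tangent cone.

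\textbf{Step 1: degenerate to the cone.} Blow up $X$ at $p$. Since the projective tangent cone is smooth, the exceptional divisor is $Y$, and the tangent cone $C_pX$ is the affine cone over $Y$ under the embedding given by the conormal bundle. Since $X$ is symplectic, hence rational Gorenstein by Proposition \ref{Gorat}, one shows:
\begin{itemize}
\item the symplectic form on $X_{\mathrm{reg}}$ extends and degenerates to a homogeneous symplectic form on $C_pX\setminus\{0\}$;
\item this form has weight equal to the degree $1$ of the $\mathbb{C}^*$-action, the weight being forced by the extension condition (c) and the canonical class computation on the blow-up.
\end{itemize}
A homogeneous symplectic form of weight $1$ on a cone minus its vertex descends to a holomorphic contact structure on $Y$. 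Its contact line bundle $L$ is $\mathcal{O}_Y(1)$ of the given embedding.

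\textbf{Step 2: classify the contact varieties.} Next we use the known structure of projective contact manifolds $Y$ with $L$ very ample such that the cone over $Y$ has symplectic (in particular normal, rational) singularities. The plan is to show that the linear system $|L|$ is the one whose sections give Hamiltonian vector fields. Then $H^0(Y,L)$ closes under the Poisson bracket into a finite-dimensional Lie algebra $\mathfrak{g}$, and $Y$ is its orbit in $\mathbb{P}(\mathfrak{g}^*)$. Beauville's argument shows that $\mathfrak{g}$ is simple: the cone is normal with an isolated singularity, so $\mathfrak{g}$ acts irreducibly. It also shows that $Y$ is the closed orbit in $\mathbb{P}\mathfrak{g}$, which is $\mathbb{P}(\mathcal{O}_{\min})$. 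Thus $C_pX\cong\overline{\mathcal{O}_{\min}}$.

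\textbf{Step 3: the analytic isomorphism.} Finally we pass from the cone back to $(X,p)$. The singularity $\overline{\mathcal{O}_{\min}}$ is rigid in the graded sense: the relevant graded pieces of $T^1$ in negative degrees vanish. This is computed from the cohomology $H^1(Y,T_Y\otimes L^{k})$, which is a Bott-vanishing-type statement on the homogeneous space $Y$. Given that vanishing, a Grauert–Schlessinger style argument shows that a singularity with this tangent cone and compatible symplectic structure is analytically isomorphic to the cone.

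The main obstacle is Step 2, showing that the contact Fano $Y$ is homogeneous, i.e.\ that $\mathfrak{g}=H^0(L)$ is large enough. I would try the Hamiltonian vector field argument above, using normality of the cone to ensure the sections of $L$ separate points.
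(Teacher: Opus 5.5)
The paper does not prove this statement; it only quotes Beauville's Theorem~2. So your outline has to stand on its own, and it has a genuine gap in Step~1, not Step~2.

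\textbf{The gap in Step~1.} Let $\varepsilon\colon\hat X\to X$ be the blow-up, $E=Y$, $\dim X=2r$, $\hat\sigma$ the extended $2$-form, and $a=\operatorname{ord}_E(\hat\sigma^{r})\geq 0$. Then $K_Y=\mathcal O_Y(-(a+1))$ and $Y$ is Fano. The leading part $\sigma_w$ of $\hat\sigma$ along $E$ is a closed form of weight $w$ on the total space of $N_E=\mathcal O_Y(-1)$. Its weight satisfies $w\geq 1$, because a weight-$0$ part would be a holomorphic $2$-form on the Fano $Y$, hence zero. If $\sigma_w$ is nondegenerate, then $Y$ is contact with $L=\mathcal O_Y(w)$ and $a+1=rw$.

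This relation is compatible with every $w$, so condition (c) and the canonical class do not force $w=1$. Your justification never uses that $p$ is singular, yet at a smooth point ($\mathbb C^{2r}$ with its constant form) one gets $w=2$ and $L=\mathcal O_{\mathbb P^{2r-1}}(2)$. The weight can be repaired with Kobayashi--Ochiai: $rw\le 2r$, with equality only for a linear $\mathbb P^{2r-1}$, i.e.\ $p$ smooth.

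The more serious problem is that nothing you invoke rules out $\sigma_w$ being degenerate, i.e.\ $\theta\wedge(d\theta)^{r-1}\equiv 0$ for $\theta=\iota_\xi\sigma_w$ with $\xi$ the Euler field. In that case $\hat\sigma^r$ simply vanishes to order $>rw-1$ along $E$, and no contact structure on $Y$ appears. Everything after Step~1 depends on this nondegeneracy.

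By contrast, Step~2 is short once $L=\mathcal O_Y(1)$ is known. Then $L$ is very ample, and the moment map embeds the cone in $H^0(Y,L)^*$ as a Poisson subvariety that is symplectic off the vertex. Such a subvariety is a single conic coadjoint orbit.

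\textbf{The missing idea: the Poisson bracket on $\mathcal O_{X,p}$.} The bracket is defined on $\mathcal O_{X,p}$ by normality.
\begin{enumerate}
\item Hamiltonian vector fields are tangent to $\operatorname{Sing}X$, so they vanish at the isolated singular point. Hence $\{\mathfrak m,\mathfrak m\}\subset\mathfrak m$, and by Leibniz $\{\mathfrak m^i,\mathfrak m^j\}\subset\mathfrak m^{i+j-1}$.
\item Take coordinates $t$, $y_k=g_k/t$ near $E$ with $t,g_k\in\mathfrak m$. Then $\operatorname{ord}_E\{t,y_k\}\ge 0$ and $\operatorname{ord}_E\{y_k,y_l\}\ge -1$. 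So the bivector $\Pi=\hat\sigma^{-1}$ has weight $\ge -1$ along $E$, with $t$ of weight $1$.
\item Meanwhile $\hat\sigma$ has weight $\ge 1$. The weight-$0$ part of $\hat\sigma\circ\Pi=\mathrm{id}$ is therefore $\sigma_1\circ\Pi_{-1}=\mathrm{id}$. Hence the leading term has weight exactly $1$ and is nondegenerate.
\end{enumerate}
The same fact makes $\mathfrak g=\mathfrak m/\mathfrak m^2$ a Lie algebra, and makes $C_pX\subset T_pX=\mathfrak g^*$ a Poisson subvariety that is symplectic off $0$. This gives Step~2 directly for $\mathfrak g$. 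It also removes a point you skip: $Y\subset\mathbb P(T_pX)$ could a priori be only a linear projection of its $|L|$-embedding.

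\textbf{Cautions on Step~3.} The Rees degeneration $f_d+sf_{d+1}+\cdots$ only involves perturbations by higher-order terms, i.e.\ $H^1(Y,T_Y\otimes L^k)$ with $k\ge 1$. The part of $T^1$ of the opposite sign does not vanish: for $\mathfrak{sl}_{n+1}$, the family $\{\operatorname{rk}\le 1,\ \operatorname{tr}=c\}$ is a nontrivial deformation of $\overline{\mathcal O_{\min}}$. You also need $\operatorname{gr}_{\mathfrak m}\mathcal O_{X,p}$ itself, not just its $\operatorname{Proj}$, to equal $\mathbb C[\overline{\mathcal O_{\min}}]$.
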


The constructions of Nakajima quiver varieties of $A_n$ type Dynkin quiver gives symplectic resolutions for all nilpotent orbit closures in $\mathfrak{sl}_{n+1}$. In particular, the minimal orbits $\mathcal{N}_{n+1,1}$, which consists of all nilpotent $(n+1)\times(n+1)$ matrices of rank $\leq 1$ has a symplectic resolution. The resolution can be easily written as follows
$$T^*\mathbb{P}^n\rightarrow\mathcal{N}_{n+1,1},\quad (i,j)\mapsto i j 
$$
where we identify $T^*\mathbb{P}^n$ by all pairs $\{(i,j)|i:\mathbb{C}\rightarrow\mathbb{C}^{n+1}$, $j:\mathbb{C}^{n+1}\rightarrow\mathbb{C}$, $ji=0\}$. Now those nilpotent rank 1 matrices have a single Jordan canonical form $E_{12}$, so all such matrices are conjugate to each other. Therefore, there is a single $GL(n)$-orbit. Using the language of partitions, we see that this orbit corresponds to $[2,1^{n-2}]$. Then we see that $\mathcal{N}_{n+1,1}-\{0\}$ is smooth and $0$ is an isolated singularity admitting a symplectic resolution.

It is natural to ask whether the minimal orbit closures of other types of simple Lie algebra also have symplectic resolutions. The answer is negative. The conditions for nilpotent orbit closures of types B,C,D admitting symplectic resolution are summarized in \cite{Fu2003Symplectic} as follows:

\begin{proposition}\label{orbitsymp}[\cite{Fu2003Symplectic} Proposition 3.21, 3.20]
Suppose $\mathfrak{g}$ is a complex simple Lie algebra of type $B,C,D$ and $\mathcal{O}$ be a nilpotent orbit in $\mathfrak{g}$ corresponding to the partition $\mathbf{d} = [d_1, \dots, d_N]$. Then $\overline{\mathcal{O}}$ admits a symplectic resolution if and only if
\begin{enumerate}
\item $\mathfrak{g}=\mathfrak{sp}_{2n}$ and there exists an even number $q \ge 0$ such that the first $q$ parts $d_1, \dots, d_q$ are odd and the other parts are even.
\item $\mathfrak{g}=\mathfrak{so}_{2n+1}$ and there exists an odd number $q \ge 0$ such that the first $q$ parts $d_1, \dots, d_q$ are odd and the other parts are even.
\item $\mathfrak{g}=\mathfrak{so}_{2n}$ and either there exists some even number $q \ne 2$ such that the first $q$ parts of $\mathbf{d}$ are odd and the others are even, or there exist exactly two odd parts which are at the positions $2k-1$ and $2k$ in the partition $\mathbf{d}$ for some $k$;
\end{enumerate}
\end{proposition}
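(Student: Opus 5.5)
The plan is to follow Fu's strategy. Step one reduces the existence of a symplectic resolution of $\overline{\mathcal{O}}$ to the existence of a \emph{polarization} of $\mathcal{O}$. The key input is Fu's theorem: any symplectic resolution of a nilpotent orbit closure $\overline{\mathcal{O}}$ is isomorphic to a Springer-type map
$$\mu_P: T^*(G/P)=G\times_P\mathfrak{n}_P\longrightarrow \overline{\mathcal{O}}$$
for some parabolic $P$ with nilradical $\mathfrak{n}_P$, and $\mu_P$ must be birational. To prove this, start from a symplectic resolution $\pi:\widetilde{X}\to\overline{\mathcal{O}}$, which is semismall and crepant by Proposition \ref{Gorat}. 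Use the $\mathbb{C}^*$-scaling action and the fact that the central fiber is Lagrangian. Produce a $G$-equivariant fibration $\widetilde{X}\to G/P$ whose fibers are linear, which identifies $\widetilde{X}$ with $T^*(G/P)$. The converse is immediate: whenever $\mu_P$ is birational onto $\overline{\mathcal{O}}$, it is a symplectic resolution. So $\overline{\mathcal{O}}$ admits a symplectic resolution if and only if $\mathcal{O}$ is Richardson for some $P$ and the generic degree $\deg\mu_P$ equals $1$.

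Step two is to make this explicit in types $B$, $C$, $D$. A parabolic in $Sp(2n)$ or $SO(N)$ is the stabilizer of an isotropic flag. It is encoded by a sequence $(p_1,\dots,p_k;q)$ with Levi factor $GL(p_1)\times\cdots\times GL(p_k)\times G'(q)$, where $G'(q)$ is of the same type. The Richardson orbit $\operatorname{Ind}_{\mathfrak{l}}^{\mathfrak{g}}(0)$ is computed inductively by the Kempken--Spaltenstein rule. Inducing from $\mathfrak{gl}(p)\times\mathfrak{g}'$ adds $2$ to the first $p$ parts of the partition, extended by zeros, and then takes the $B/C/D$-collapse. I will check, type by type, which partitions arise from such an iteration.

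Step three computes $\deg\mu_P$ via Hesselink's formula. The degree equals the index $[A(\mathcal{O}):A_P(\mathcal{O})]$ of component groups. Concretely, at each induction step the degree doubles exactly when the collapse is not needed but the parity of a certain part changes, namely when $p$ is a boundary between parts of different parity. I would record this as a lemma: a step is birational if and only if no collapse occurs and the step does not separate two odd parts of $\mathbf{d}$ in the orthogonal case, or two even parts in the symplectic case.

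Step four is the combinatorics, and this is where I expect the main obstacle. One must show that a chain of birational, collapse-free inductions starting from $0$ exists exactly when the parity conditions of the statement hold. For $\mathfrak{sp}_{2n}$ these are: an even number $q$ of odd parts at the beginning, followed by even parts. For $\mathfrak{so}$ the count is odd or even as stated, together with the exceptional $D$-configuration of two odd parts at positions $2k-1,2k$. For the ``if'' direction, build the flag greedily by peeling off columns of the Young diagram, using the sum of adjacent column pairs so that parity is preserved. For the ``only if'' direction, argue by induction on $n$. The last induction step must remove a set of parts of equal parity without collapsing, which forces odd parts to precede even ones. The delicate cases are $q=2$ in type $D$, where the degree is $2$ unless the two odd parts are adjacent in the stated sense, and the parity distinction between $B$ and $D$. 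I would handle these by directly computing $A(\mathcal{O})$ (a product of $\mathbb{Z}/2$'s indexed by distinct odd parts, modulo the determinant condition) and comparing it with $A_P(\mathcal{O})$.
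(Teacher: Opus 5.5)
Your overall architecture is Fu's. The paper gives no proof of its own and simply cites \cite{Fu2003Symplectic}. Using Fu's theorem in Step 1 as a black box is fine. Note, though, that it concerns the normalization of $\overline{\mathcal{O}}$, and that lifting the $G$-action to $\widetilde X$ is where the real work lies.

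\textbf{The gap is the birationality criterion in Step 3.} It fails in both directions, and Step 4 built on it would prove a different classification.

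\emph{Collapse-free steps are always birational.} In such a step one always has $d_p=f_p+2\geq f_{p+1}+2=d_{p+1}+2$, and the step is birational whatever the parities of $d_p$ and $d_{p+1}$. For example, let $P$ be the stabilizer of an isotropic line in $SO(N)$ and induce from $0\in\mathfrak{so}_{N-2}$. This gives $[3,1^{N-3}]$, a step that separates the odd parts $3$ and $1$. A point $\ell$ of the fiber over $x$ must satisfy $\ell\subseteq\ker x$ and $x(\ell^\perp)\subseteq\ell$. Since $\dim\ker x=N-2<\dim\ell^\perp$, this forces $\ell=\operatorname{im}x^2$, so $T^*Q^{N-2}\to\overline{\mathcal{O}_{[3,1^{N-3}]}}$ is birational. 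Under your lemma the last step would have to be collapse-free, and $p=1$ is the only such step producing $[3,1^{2n-2}]$. So your lemma would deny this orbit in $\mathfrak{so}_{2n+1}$ a symplectic resolution, contradicting (2) with $q=2n-1$. The symplectic case fails the same way: $[2,2]\mapsto[4,2]$ in $\mathfrak{sp}_6$ with $p=1$ separates two even parts, yet it is birational, because the fiber is forced to be $\operatorname{im}x^2\cap\ker x$. Your heuristic is backwards: degree $2$ comes \emph{from} the collapse. The model case is $[3,1]\to[2,2]$, i.e.\ $T^*\mathbb{P}^3\to\overline{\mathcal{O}_{[2,2]}}\subset\mathfrak{sp}_4$, where the fiber is the two isotropic lines of a nondegenerate quadratic plane.

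\emph{Collapse does not always destroy birationality.} Your clause ``collapse $\Rightarrow$ not birational'' fails at exactly the step behind the second alternative of (3). Inducing from $0$ on $\mathfrak{gl}_p\subset\mathfrak{so}_{2p}$ with $p$ odd requires the collapse $[2^p]\to[2^{p-1},1,1]$. However, the fiber over $x$ consists of the two Lagrangians between $\operatorname{im}x$ and $\ker x$, and these lie in different $SO(2p)$-families. So for a fixed parabolic the map has degree $1$. For $p=3$ this is $T^*\mathbb{P}^3\to\overline{\mathcal{O}_{\min}(\mathfrak{sl}_4)}=\overline{\mathcal{O}_{[2,2,1,1]}}\subset\mathfrak{so}_6$. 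Following this step by collapse-free steps yields exactly the partitions with two odd parts at positions $2k-1,2k$, such as $[2,2,1,1]$ and $[4,4,3,3]$, and your lemma would exclude all of them. Also, your ``delicate case $q=2$'' is not delicate: first two parts odd and the rest even is just the case $k=1$.

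\emph{The input you need.} This is Hesselink's criterion, as used by Fu: a step is birational if and only if $\mathbf f+[2^p]$ is already an $\epsilon$-partition, except for the type-$D$ step with $\mathfrak g'=0$ and $p$ odd. With it, Step 4 is short.
\begin{enumerate}
\item Collapse-free steps add $2$ to prefixes of $[1^{q}]$, where $q$ is the dimension of the innermost $\mathfrak g'$. So $q$ is even in types $C$ and $D$ and odd in type $B$, and the $q$ odd parts stay in front.
\item Conversely, let $\mathbf d$ be an $\epsilon$-partition of that shape. Subtracting $2$ from all parts $\geq 2$ is a collapse-free inverse step that preserves the shape. Induction then gives (1), (2) and the first alternative of (3).
\item The exceptional type-$D$ step gives the second alternative of (3).
\end{enumerate}
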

One checks easily that the minimal orbits of type $B_n, C_n, D_n$ correspond to the partitions $(2,2,1^{2n-3})$, $(2,1^{2n-2})$, $(2,2,1^{2n-4})$ respectively, and none of these partitions satisfy the conditions above, so they do not admit symplectic resolution.
    
Indeed, isolated symplectic singularities are all analytically look like ADE surface singularities or $A_n$ singularities, as the following result shows.
\begin{proposition}\label{sympdim}[\cite{ChoMiyaokaShepherdBarron2002}, Theorem 8.3]
Let $\hat{Z}$ be a normal projective variety of dimension $2n$
with a single isolated singularity.
Assume that there exists a symplectic resolution
$\pi : Z \to \hat{Z}$; in other words, $\pi$ is a birational morphism
from the smooth projective complex symplectic variety $Z$ onto $\hat{Z}$.
Then:
\begin{enumerate}
\item The exceptional locus $E \subset \hat{Z}$ of $\pi$
is a union of Lagrangian submanifolds isomorphic to $\mathbb{P}^n$.
\item When $n = 1$, the exceptional locus $E$ is a tree of smooth $\mathbb{P}^1$'s
with configuration of one of the ADE singularities.
\item If $n \ge 2$, then $E$ consists of a single smooth $\mathbb{P}^n$,
and $\pi : Z \to \hat{Z}$ is analytically locally a standard resolution.
\end{enumerate}
\end{proposition}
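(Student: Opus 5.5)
The plan for Proposition~\ref{sympdim} is to first pin down the dimension and isotropy of the exceptional locus, then recognize each component as $\mathbb{P}^n$ through its rational curves, and finally rigidify a neighbourhood of it.

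\textbf{Step 1: dimension and isotropy.} Let $p$ be the singular point and $E=\pi^{-1}(p)$ with its reduced structure. A symplectic resolution is semismall (Kaledin), and the fibre sits over a point of codimension $2n$, so $\dim E\le n$. Moreover $E$ is isotropic for the symplectic form $\omega$ on $Z$. To see this, take a resolution $\widetilde{E}\to E$. The restriction of $\omega$ to $\widetilde{E}$ is a holomorphic $2$-form; it is pulled back from a form defined near $p$ on $\hat Z$, and $E$ maps to a point, so it vanishes.

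\textbf{Step 1, continued: pure dimension $n$.} Next I would show every component of $E$ has dimension exactly $n$. Since $\pi$ is crepant and $Z$ is symplectic, $K_Z$ is trivial. The rational-singularity vanishing $R^{>0}\pi_*\mathcal{O}_Z=0$ from Proposition~\ref{Gorat}, combined with a Hodge-theoretic or local-cohomology argument in the style of Wierzba, gives $H^{2n}_E(Z)\neq 0$ on each component. Together with the bound $\dim E\le n$, this forces each component to be Lagrangian.

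\textbf{Step 2: each component is $\mathbb{P}^n$.} The fibre $E$ is rationally chain connected, so each component $E_i$ is uniruled. Take a minimal rational curve $C\subset E_i$. Since $K_Z\cdot C=0$, the deformation space of $C$ in $Z$ has dimension at least $-K_Z\cdot C+2n-3=2n-3$. Every such deformation stays inside $E$, because $\pi$ contracts it. Using the symplectic isomorphism $N_{E_i/Z}\cong\Omega_{E_i}$ on the smooth locus, I would translate this into a family of rational curves on $E_i$ through a general point that is large enough to give $-K_{E_i}\cdot C=n+1$ for the minimal curves. The characterization of projective space by lengths of rational curves (Cho--Miyaoka--Shepherd-Barron) then shows that the normalization of $E_i$ is $\mathbb{P}^n$. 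I expect the main obstacle to be upgrading from the normalization to smoothness of $E_i$ itself. I would attack it through the tangent map of the family of minimal curves: show it is an embedding, so that no curves get identified. This gives part~(1); part~(2) is the classical case $n=1$, where crepant resolutions of surface singularities are the ADE trees.

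\textbf{Step 3: $n\ge 2$, one component and local standard form.} Suppose two Lagrangian components $P_1\cong P_2\cong\mathbb{P}^n$ met. Restricting to a line in $P_1$ that meets $P_2$, the normal bundle $\Omega_{P_1}$ restricted to a line, $\mathcal{O}(-2)\oplus\mathcal{O}(-1)^{n-1}$, would conflict with the intersection numbers computed on the other component. Hence the components are disjoint. By Zariski's main theorem $E$ is connected, so it is a single $\mathbb{P}^n$.

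For the local model, the normal bundle is $T^*\mathbb{P}^n$. The obstructions to identifying the formal neighbourhood with that of the zero section in $T^*\mathbb{P}^n$ lie in groups such as $H^1\!\left(\mathbb{P}^n, T_Z|_{\mathbb{P}^n}\otimes \mathrm{Sym}^k T_{\mathbb{P}^n}\right)$, which vanish by Bott vanishing. So the formal neighbourhoods agree, and Grauert's theorem for exceptional sets with negative normal bundle upgrades this to an analytic isomorphism. Hence $\pi$ is analytically locally the standard contraction $T^*\mathbb{P}^n\to\mathcal{N}_{n+1,1}$.
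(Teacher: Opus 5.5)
The paper does not prove this proposition; it quotes it from \cite{ChoMiyaokaShepherdBarron2002}, Theorem 8.3. Your outline has the right overall shape: semismallness and isotropy, covering families of minimal rational curves in the fibre, the length characterization of $\mathbb{P}^n$, and Grauert-type rigidity. However, two steps do not work as written.

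\textbf{Step 2 is one dimension short.} Bend-and-break for the unsplit covering family $V$ of $E_i$ gives $\dim V=\dim E_i+\dim V_x-1\le 2\dim E_i-2$. With your bound $\dim V\ge 2n-3$ and $\dim E_i\le n$, this forces $\dim E_i=n$. But it only gives $\dim V_x\ge n-2$, i.e.\ $-K_{E_i}\cdot C\ge n$. That is the numerology of a quadric $Q^n$ swept by lines, not of $\mathbb{P}^n$, so the length characterization cannot be applied.

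The identification $N_{E_i/Z}\cong\Omega_{E_i}$ does not close this gap by itself. If $T_{E_i}|_C\cong\mathcal{O}(2)\oplus\mathcal{O}(1)^{p}\oplus\mathcal{O}^{q}$, the normal bundle adds $q=h^0(\Omega_{E_i}|_C)$ first-order deformations of $C$ in $Z$. These are exactly the ones that fail to integrate, since every deformation stays in $E_i$. Your inequality then yields only $q\le 1$.

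The missing ingredient is the symplectic improvement of the deformation bound, due to Ran. In a holomorphic symplectic manifold of dimension $2n$, every component of the deformation space of a rational curve has dimension at least $2n-2$. The reason is that $\omega$ induces a surjection $H^1(C,N_{C/Z})\to H^1(C,\Omega_C)\cong\mathbb{C}$ whose kernel contains all obstructions. With this bound you get $2n-2\le\dim V\le 2\dim E_i-2\le 2n-2$. Hence $\dim E_i=n$ and $\dim V_x=n-1$, i.e.\ $-K_{E_i}\cdot C=n+1$, at once.

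This also makes your pure-dimensionality paragraph unnecessary, and that paragraph is circular as stated. Indeed $H^{2n}_E(Z)\cong H_{2n}(E)$ is spanned by the fundamental classes of the $n$-dimensional components, so ``nonzero on each component'' is just the claim itself.

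\textbf{Step 3, and two smaller points.} For $n\ge 2$ a second component $P_2$ has codimension $n\ge 2$. So a line $\ell\subset P_1$ has no intersection number with $P_2$. Moreover the splitting $N_{P_1/Z}|_\ell\cong\mathcal{O}(-2)\oplus\mathcal{O}(-1)^{n-1}$ is intrinsic to $P_1$ and carries no information about $P_2$. The mechanism you invoke is the surface one, where components are divisors, and even there the analogous computation is compatible with meeting components (the ADE trees). So disjointness of the components still needs a genuine argument; together with connectedness of $E$, it is what yields a single $\mathbb{P}^n$.

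Likewise, the upgrade from ``normalization $\cong\mathbb{P}^n$'' to ``$E_i$ is smooth'' is only announced. It is needed before your formal-neighbourhood and Grauert step, which is fine in itself but presupposes a smooth $\mathbb{P}^n$ with normal bundle $\Omega_{\mathbb{P}^n}$.

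Finally, your isotropy argument is incorrect. The form $\omega$ is not the pullback of a form near $p$ on $\hat Z$; already for $\mathbb{C}^2/\pm 1$ the symplectic form is not the restriction of any ambient $2$-form. The standard proof goes as follows. Take a Stein neighbourhood $U$ of $p$. The class of $\overline{\omega}$ in $H^2(\pi^{-1}(U),\mathcal{O}_Z)=H^0(U,R^2\pi_*\mathcal{O}_Z)=0$ restricts to the Dolbeault class of $\overline{\omega|_{\widetilde E}}$. Hodge theory on the smooth projective $\widetilde E$ then forces $\omega|_{\widetilde E}=0$.
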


We will also use the dimensional information about exceptional fibers in symplectic resolutions of isolated singularities in the following contexts. 

\subsection{Maximal Cohen-Macaulay sheaves}\label{MCMonsymplecticresolution}
The family of maximal Cohen-Macaulay modules is an important feature of a singularity. Suppose $R$ is a Cohen-Macaulay local ring with maximal ideal $\mathfrak{m}$. An $R$ module $M$ is called Cohen-Macaulay if $\dim(\text{Supp}(M))=\text{depth} M$. It is called \textit{maximal Cohen-Macaulay} if both of those two numbers equal $\dim R$. We have the following homological characterization (cf. \cite{Hartshorne1977}, III \S 3 Exercise 3.4)
\begin{proposition}\label{locohMCM}
Suppose $R$ is a Cohen-Macaulay local ring with maximal ideal $\mathfrak{m}$ and $M$ is a finitely generated $R$-module. Then $M$ is maximal Cohen-Macaulay if and only if $H_\mathfrak{m}^i(M)=0$ for $i< \dim R$.
\end{proposition}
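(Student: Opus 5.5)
The plan is to combine Grothendieck's local duality with the standard comparison between local cohomology and Ext, which is the route the cited Hartshorne exercise has in mind. Let $d=\dim R$. Since $R$ is Cohen--Macaulay, we may complete $R$ without changing local cohomology, depth, or dimension, so we assume $R$ is complete. Then $R$ is a quotient of a regular local ring and has a canonical module $\omega_R$.

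For the forward direction, suppose $M$ is maximal Cohen--Macaulay, so $\operatorname{depth}M=d$. The key input is the characterization
\[
\operatorname{depth}M=\min\{i\mid H^i_{\mathfrak m}(M)\neq 0\}.
\]
I would prove this by induction on the depth, using a nonzerodivisor $x\in\mathfrak m$ on $M$ and the long exact sequence of local cohomology attached to $0\to M\xrightarrow{x}M\to M/xM\to 0$.
\begin{itemize}
\item For the base case, if $\operatorname{depth}M=0$ then $\mathfrak m$ is an associated prime of $M$, so $H^0_{\mathfrak m}(M)=\Gamma_{\mathfrak m}(M)\neq 0$.
\item For the inductive step, every element of $H^i_{\mathfrak m}(M)$ is killed by a power of $x$. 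Hence $H^{i}_{\mathfrak m}(M)=0$ for $i<k$ is equivalent to $H^{i}_{\mathfrak m}(M/xM)=0$ for $i<k-1$.
\end{itemize}
With this in hand, $\operatorname{depth}M=d$ immediately gives $H^i_{\mathfrak m}(M)=0$ for $i<d$.

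For the converse, suppose $H^i_{\mathfrak m}(M)=0$ for all $i<d$. By the same characterization, $\operatorname{depth}M\ge d$. We always have $\operatorname{depth}M\le\dim\operatorname{Supp}M\le d$, so equality holds throughout. This is exactly the definition of maximal Cohen--Macaulay given before the proposition.

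Alternatively, the vanishing can be rephrased via local duality, $H^i_{\mathfrak m}(M)^\vee\cong\operatorname{Ext}^{d-i}_R(M,\omega_R)$. This form is what the subsequent sections will actually use when applying the criterion to $\pi_*\mathcal F$.

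The main obstacle is the base and inductive steps of the depth characterization. In particular, one must check that $H^i_{\mathfrak m}$ is the derived functor of $\Gamma_{\mathfrak m}$, so that it is supported at $\mathfrak m$ and multiplication by $x$ is locally nilpotent on it. I would handle this by computing $H^i_{\mathfrak m}$ via the Čech complex on a system of parameters. Since this is textbook material (Hartshorne III, Ex. 3.4, and Bruns--Herzog 3.5.7), I would simply cite it rather than reprove it in the paper.
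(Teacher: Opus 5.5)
Your proposal is correct and takes essentially the paper's route: the paper gives no proof beyond citing Hartshorne III, Ex.~3.4. That exercise is exactly the characterization $\operatorname{depth}M=\min\{i\mid H^i_{\mathfrak m}(M)\neq 0\}$, which you establish by induction on depth using $0\to M\xrightarrow{x}M\to M/xM\to 0$. The completion step and the local-duality aside are unnecessary, as is the Cohen--Macaulay hypothesis on $R$; also, like the statement itself, your converse tacitly assumes $M\neq 0$.
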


For Gorenstein rings, Grothendieck local duality implies
\begin{proposition}\label{GorenMCM}
Suppose $R$ is a Gorenstein local ring and $M$ a finitely generated $R$-module. Then $M$ is maximal Cohen-Macaulay if and only if $\operatorname{Ext}^{i}_R(M, R)=0$ for all $i>0$.
\end{proposition}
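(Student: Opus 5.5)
We derive the statement from Proposition \ref{locohMCM} together with Grothendieck local duality over the Gorenstein local ring $R$. Set $d=\dim R$. Since $R$ is Gorenstein, $R$ is its own canonical module, so $\omega_R\cong R$ and the injective hull $E=E(R/\mathfrak{m})$ satisfies $H^d_\mathfrak{m}(R)\cong E$. Local duality then gives, for every finitely generated $R$-module $M$ and every $i$, a natural isomorphism
\[
H^i_\mathfrak{m}(M)\cong \operatorname{Hom}_R\bigl(\operatorname{Ext}^{d-i}_R(M,R),E\bigr)
\]
after $\mathfrak{m}$-adic completion. It is cleanest to pass to $\widehat{R}$ from the start: $\widehat{R}$ is again Gorenstein local of dimension $d$; the depth of $M$ is unchanged by completion; $H^i_\mathfrak{m}(M)\cong H^i_{\widehat{\mathfrak{m}}}(\widehat M)$; and by flatness $\operatorname{Ext}^j_R(M,R)\otimes\widehat R\cong\operatorname{Ext}^j_{\widehat R}(\widehat M,\widehat R)$. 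Faithful flatness of completion lets us test the vanishing of $\operatorname{Ext}$ over $\widehat R$. So we may assume $R$ is complete.

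\emph{Matlis duality.} For complete $R$, $\operatorname{Hom}_R(-,E)$ is faithful: a finitely generated module $N$ vanishes if and only if $\operatorname{Hom}_R(N,E)=0$, because $E$ is an injective cogenerator. Combining this with local duality, $H^i_\mathfrak{m}(M)=0$ if and only if $\operatorname{Ext}^{d-i}_R(M,R)=0$. As $i$ ranges over $0\le i<d$, the index $j=d-i$ ranges over $1\le j\le d$.

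\emph{Indices beyond $d$.} Because $R$ is Gorenstein, $\operatorname{injdim}_R R=d$. Hence $\operatorname{Ext}^j_R(M,R)=0$ for all $j>d$ and all $M$, with no condition on $M$.

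\emph{Conclusion.} Putting these together, $H^i_\mathfrak{m}(M)=0$ for all $i<d$ if and only if $\operatorname{Ext}^j_R(M,R)=0$ for all $j>0$. By Proposition \ref{locohMCM}, the first condition is exactly the statement that $M$ is maximal Cohen-Macaulay.

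The only step needing care is stating local duality in the right generality and handling completion; the remaining steps are formal.
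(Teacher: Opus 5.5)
Your argument is correct and follows the route the paper indicates. The paper gives no proof beyond citing Grothendieck local duality, and you fill this in by combining local duality (with $\omega_R\cong R$), the fact that $\operatorname{Hom}_R(-,E)$ kills no nonzero module, the bound $\operatorname{injdim}_R R=d$, and Proposition \ref{locohMCM}. The reduction to the complete case is harmless but unnecessary: local duality already holds over any Cohen--Macaulay local ring with a canonical module, and $E$ is an injective cogenerator over any local ring.
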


For a Cohen-Macaulay variety $X$, we call a coherent sheaf $\mathcal{F}$ maximal Cohen-Macaulay if for any point $p\in X$, the stalk $\mathcal{F}_p$ is a maximal Cohen-Macaulay $\mathcal{O}_{X,p}$ module. One can easily write down the sheaf version of the previous two propositions.

To consider maximal Cohen-Macaulay sheaves on singular variety with a symplectic resolution, Proposition \ref{GorenMCM} is better since we have Grothendieck coherent duality:

\begin{lemma}\label{specsequence}
Suppose $\pi:\widetilde{X}\rightarrow X=\operatorname{Spec}\mathcal{O}$ is a symplectic resolution. For any coherent sheaf $\mathcal{F}$ on $\widetilde{X}$, there is a converging spectral sequence
$$E_2^{i,j}=\operatorname{Ext}_{\mathcal{O}}^i(R^{-j}\pi_*\mathcal{F},\mathcal{O})\Rightarrow \operatorname{Ext}_{\mathcal{\widetilde{O}}}^{i+j}(\mathcal{F}
,\mathcal{\widetilde{O}})
$$
where $\widetilde{\mathcal{O}}$ is the structure sheaf of $\widetilde{X}$.
\end{lemma}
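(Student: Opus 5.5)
This is Grothendieck duality for the proper morphism $\pi$, combined with crepancy and the Grothendieck spectral sequence for a composite of derived functors.

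The first step is to write down duality. For $\pi:\widetilde{X}\to X$ proper and $\mathcal{F}$ coherent on $\widetilde{X}$ there is a natural isomorphism
$$\mathbf{R}\pi_*\mathbf{R}\mathcal{H}om_{\widetilde{X}}(\mathcal{F},\pi^!\mathcal{O}_X)\cong \mathbf{R}\mathcal{H}om_X(\mathbf{R}\pi_*\mathcal{F},\mathcal{O}_X).$$

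The second step is to identify $\pi^!\mathcal{O}_X$.
\begin{itemize}
\item By Proposition~\ref{Gorat}, $X$ is Gorenstein, so $\omega_X$ is a line bundle and $\mathcal{O}_X$ is a dualizing complex up to shift.
\item Then $\pi^!\mathcal{O}_X\cong \omega_{\widetilde{X}}\otimes\pi^*\omega_X^{-1}$, placed in degree $0$ since $\pi$ is birational and $\dim\widetilde{X}=\dim X$.
\item A symplectic resolution is crepant, so $\omega_{\widetilde{X}}\cong\pi^*\omega_X$.
\item Hence $\pi^!\mathcal{O}_X\cong\mathcal{O}_{\widetilde{X}}=\widetilde{\mathcal{O}}$.
\end{itemize}
Concretely, the symplectic form $\varphi$ gives $\varphi^{n}$, a nowhere-vanishing section of $\omega_{\widetilde{X}}$, so $\omega_{\widetilde{X}}\cong\widetilde{\mathcal{O}}$. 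Likewise $\omega_X\cong\mathcal{O}_X$: it is reflexive of rank one and trivialized on $X^{reg}$, whose complement has codimension $\ge 2$ by normality.

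The third step is to take global sections. Since $X$ is affine, $\Gamma(X,-)$ is exact on quasi-coherent sheaves. Applying $\mathbf{R}\Gamma$ to the duality isomorphism gives
$$\mathbf{R}\operatorname{Hom}_{\widetilde{X}}(\mathcal{F},\widetilde{\mathcal{O}})\cong \mathbf{R}\operatorname{Hom}_{\mathcal{O}}(\mathbf{R}\pi_*\mathcal{F},\mathcal{O}).$$
Here the left side uses $\mathbf{R}\Gamma\circ\mathbf{R}\pi_*=\mathbf{R}\Gamma_{\widetilde{X}}$. The right side becomes a complex of $\mathcal{O}$-modules, where $\mathcal{O}=\Gamma(X,\mathcal{O}_X)$.

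The fourth step is the hypercohomology spectral sequence for $\mathbf{R}\operatorname{Hom}(-,\mathcal{O})$ applied to $\mathbf{R}\pi_*\mathcal{F}$. This complex has cohomology sheaves $R^{q}\pi_*\mathcal{F}$ in degrees $q\ge 0$, so the sequence reads
$$E_2^{i,j}=\operatorname{Ext}^i_{\mathcal{O}}(H^{-j}(\mathbf{R}\pi_*\mathcal{F}),\mathcal{O})=\operatorname{Ext}^i(R^{-j}\pi_*\mathcal{F},\mathcal{O})\Rightarrow \operatorname{Ext}^{i+j}(\mathcal{F},\widetilde{\mathcal{O}}).$$

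Convergence holds because the complex is bounded. Only finitely many $R^q\pi_*\mathcal{F}$ are nonzero, since the fibers of $\pi$ have bounded dimension. Also $\operatorname{Ext}^i_{\mathcal{O}}(-,\mathcal{O})$ vanishes for $i>\dim X$, because $\mathcal{O}$ is Gorenstein and hence has finite injective dimension. The sequence is therefore a bounded second-quadrant sequence and converges.

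The main care point is the identification $\pi^!\mathcal{O}_X\cong\widetilde{\mathcal{O}}$ with the correct shift. The shift is zero because the relative dimension is $0$, and the line bundle is trivial because both canonical sheaves are trivialized by the symplectic volume form.
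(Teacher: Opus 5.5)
Your proposal is correct and follows the same route as the paper. Both arguments use Grothendieck duality for $\pi$, then the identification $\pi^!\mathcal{O}\cong\widetilde{\mathcal{O}}$ (the paper says $\pi^!$ carries the dualizing complex $\mathcal{O}[n]$ to $\widetilde{\mathcal{O}}[n]$, which is the same as your crepancy argument), then global sections on the affine $X$, and finally the hyper-Ext spectral sequence for $\mathbf{R}\pi_*\mathcal{F}$. Your explicit convergence argument is a useful addition that the paper omits. One small slip: with $i\ge 0$ and $j\le 0$ the spectral sequence sits in the fourth quadrant, not the second.
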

\begin{proof}
Since this resolution is symplectic, we see by Proposition \ref{Gorat} that $X$ is Gorenstein and hence the dualizing complex of $X$ is $\mathcal{O}[n]$. On the other hand, that $\widetilde{X}$ is symplectic implies the dualizing complex of $\widetilde{X}$ is $\mathcal{\widetilde{O}}[n]$. Now $\pi^!$ pulls back dualizing complex to dualizing complex, we have $\pi^!(\mathcal{O}[n])=\mathcal{\widetilde{O}}[n]$. 
Suppose $\dim X=\dim \widetilde{X}=n$. By Grothendieck duality, we have
$$\mathbf{R}\mathcal{H}om_X(\mathbf{R}\pi_* \mathcal{F},\mathcal{O}[n])=\mathbf{R}\pi_*\mathbf{R}\mathcal{H}om_{\widetilde{X}}(\mathcal{F},\pi^!(\mathcal{O}[n]))
$$
Taking derived pushforward to a point and twisting both sides by $[-n]$, we get a global version
$$\mathbf{R}\operatorname{Hom}_X(\mathbf{R}\pi_* \mathcal{F},\mathcal{O}[n])=\mathbf{R}\operatorname{Hom}_{\widetilde{X}}(\mathcal{F},\pi^!(\mathcal{O}[n]))
$$
Finally, taking the cohomology, the Grothendieck-Serre spectral sequence gives you the result.
\end{proof}

Using this spectral sequence, one can easily conclude the following Corollary.

\begin{corollary}\label{VanishingthenMCM}
Suppose $\pi:\widetilde{X}\rightarrow X$ is a symplectic resolution. $\mathcal{F}$ is a vector bundle on $\widetilde{X}$ such that
$$R^{>0}\pi_*\mathcal{F}=R^{>0}\pi_*\mathcal{F}^\vee=0$$
then $M=\pi_*\mathcal{F}$ is maximal Cohen-Macaulay.
\end{corollary}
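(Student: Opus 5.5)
The plan is to apply Lemma \ref{specsequence} to $\mathcal{F}$ and then use Proposition \ref{GorenMCM}. Since $\mathcal{F}$ is a vector bundle on the smooth variety $\widetilde{X}$, we have
\[
\operatorname{Ext}^{k}_{\widetilde{\mathcal{O}}}(\mathcal{F},\widetilde{\mathcal{O}})=H^k(\widetilde{X},\mathcal{F}^\vee).
\]
The statement is local on $X$, so we may assume $X=\operatorname{Spec}\mathcal{O}$ is affine, as in the setting of the lemma. Then $H^k(\widetilde{X},\mathcal{F}^\vee)=H^0(X,R^k\pi_*\mathcal{F}^\vee)$, and by hypothesis this vanishes for $k>0$. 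So the abutment of the spectral sequence is concentrated in total degree $0$, where it equals $\Gamma(\pi_*\mathcal{F}^\vee)$.

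Next, the hypothesis $R^{>0}\pi_*\mathcal{F}=0$ forces $E_2^{i,j}=\operatorname{Ext}^i_{\mathcal{O}}(R^{-j}\pi_*\mathcal{F},\mathcal{O})$ to vanish unless $j=0$. The spectral sequence therefore collapses to a single row, and
\[
E_2^{i,0}=\operatorname{Ext}^i_{\mathcal{O}}(\pi_*\mathcal{F},\mathcal{O})\cong \operatorname{Ext}^{i}_{\widetilde{\mathcal{O}}}(\mathcal{F},\widetilde{\mathcal{O}}).
\]
Combined with the first step, $\operatorname{Ext}^i_{\mathcal{O}}(\pi_*\mathcal{F},\mathcal{O})=0$ for all $i>0$. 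These modules are finitely generated, so the vanishing localizes: $\operatorname{Ext}^i_{\mathcal{O}_{X,p}}((\pi_*\mathcal{F})_p,\mathcal{O}_{X,p})=0$ for every $p$ and every $i>0$. Since $X$ is Gorenstein by Proposition \ref{Gorat}, Proposition \ref{GorenMCM} shows that each stalk $(\pi_*\mathcal{F})_p$ is maximal Cohen-Macaulay. Coherence of $\pi_*\mathcal{F}$ follows from properness of $\pi$.

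The only delicate point is bookkeeping the indexing and shifts in Lemma \ref{specsequence}, in particular confirming that the lemma's normalization makes the degree-$0$ row match $\operatorname{Ext}^i(\pi_*\mathcal{F},\mathcal{O})$ with no shift. The derivation is essentially the duality $\mathbf{R}\mathcal{H}om(\mathbf{R}\pi_*\mathcal{F},\mathcal{O})\cong\mathbf{R}\pi_*\mathbf{R}\mathcal{H}om(\mathcal{F},\widetilde{\mathcal{O}})$, which uses $\pi^!\mathcal{O}=\widetilde{\mathcal{O}}$, i.e. crepancy. With both vanishing hypotheses, this reads $\mathbf{R}\mathcal{H}om(\pi_*\mathcal{F},\mathcal{O})\cong\pi_*\mathcal{F}^\vee$, a complex concentrated in degree $0$. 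This sheaf-level form gives the conclusion directly and avoids any shift ambiguity.
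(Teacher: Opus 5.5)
Your proposal is correct and follows essentially the same route as the paper: reduce to $X$ affine, use $R^{>0}\pi_*\mathcal{F}=0$ to collapse the spectral sequence of Lemma \ref{specsequence} to $\operatorname{Ext}^i_{\mathcal{O}}(\pi_*\mathcal{F},\mathcal{O})\cong H^i(\widetilde{X},\mathcal{F}^\vee)$, note that this vanishes for $i>0$ by the hypothesis on $\mathcal{F}^\vee$, and conclude with Proposition \ref{GorenMCM}. Your extra care in passing the Ext vanishing to stalks, and your sheaf-level form $\mathbf{R}\mathcal{H}om(\pi_*\mathcal{F},\mathcal{O})\cong\pi_*\mathcal{F}^\vee$, are only refinements of the same argument.
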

\begin{proof}
since the statement is local, we may assume $X=\operatorname{Spec}\mathcal{O}$ is affine. If $R^{>0}\pi_*\mathcal{F}=0$, we see that the spectral sequence in Lemma \ref{specsequence} collapses to the $0^{th}$ column, so it gives you $\text{Ext}_{\mathcal{O}}^i(M,\mathcal{O})=\text{Ext}_{\mathcal{O}}^i(\pi_*\mathcal{F},\mathcal{O})=\text{Ext}^i_\mathcal{\widetilde{O}}(\mathcal{F},\mathcal{\widetilde{O}})=H^i(\mathcal{F}^\vee)=0$. Since $X$ is affine, by Proposition \ref{GorenMCM} we see that $M$ is maximal Cohen-Macaulay.
\end{proof}

Now we state one of the main theorems in this chapter. For isolated symplectic singularities, the condition of vanishing cohomology in Corollary \ref{VanishingthenMCM} is not far from an equivalent condition for maximal Cohen-Macaulay. In order to introduce our result in a clean, elegant way, we need first the definition of a dualizing map.

Suppose $X=\operatorname{Spec}\mathcal{O}$ is an affine singular variety of dimension $2n$ and $\pi:\widetilde{X}\rightarrow X$ is a symplectic resolution. Assume further that all fibers at closed points have dimension $\leq n$. For a vector bundle $\mathcal{F}$ on $\widetilde{X}$, the spectral sequence in Lemma \ref{specsequence} has $2n$ columns (By Gorenstein property) and $n$ rows, and there is an edge morphism
$$\operatorname{Ext}_{\mathcal{O}}^{2n}(R^{n}\pi_*\mathcal{F},\mathcal{O})=E_2^{2n,-n}\rightarrow E_\infty^{2n,-n}\hookrightarrow \operatorname{Ext}_{\mathcal{\widetilde{O}}}^{n}(\mathcal{F}
,\mathcal{\widetilde{O}})=H^n(\mathcal{F}^\vee)
$$
Now for any closed point $P$ on $X$, by Proposition \ref{Gorat} we know that $\mathcal{O}_P$ is Gorenstein, so there is $\operatorname{Ext}_{\mathcal{O}_P}^{2n}(\mathbb{C},\mathcal{O}_P)=\mathbb{C}$. Then for any $\mathcal{O}_{P}$-module $M_P$, a morphism from $M_P$ to $\mathbb{C}$ induces a map $\mathbb{C}=\operatorname{Ext}_{\mathcal{O}_P}^{2n}(\mathbb{C},\mathcal{O}_P)\rightarrow \operatorname{Ext}{2n}_{\mathcal{O}_P}(M_P,\mathcal{O}_P)$, so there is a natural dualizing map $\operatorname{Hom}_{\mathcal{O}_P}(M_P,\mathbb{C})\rightarrow \operatorname{Ext}{2n}_{\mathcal{O}_P}(M_P,\mathcal{O}_P)$. Let $M=(R^{n}\pi_*\mathcal{F})_P$, we get a map 
$$\operatorname{Hom}_{\mathcal{O}_P}((R^{n}\pi_*\mathcal{F})_P,\mathbb{C})\rightarrow \operatorname{Ext}_{\mathcal{O}_P}^{2n}((R^{n}\pi_*\mathcal{F})_P,\mathcal{O}_P)
$$

When $X$ only has isolated symplectic singularities $\{P_1,P_2,...,P_N\}$, we see that $R^{n}\pi_*\mathcal{F}$ is supported only on those closed points, so $R^{n}\pi_*\mathcal{F}=\oplus_{k=1}^{N}(R^{n}\pi_*\mathcal{F})_{P_k}$. Since $\pi$ is proper, $(R^{n}\pi_*\mathcal{F})_{P_k}$ is a finite generated, hence finite length $\mathcal{O}_{P_k}$ module. Then the map above is  $(R^{n}\pi_*\mathcal{F})'_{P_k}\rightarrow \operatorname{Ext}_{\mathcal{O}_{P_k}}^{2n}((R^{n}\pi_*\mathcal{F})_{P_k},\mathcal{O}_{P_k})$ and is an isomorphism for each $k$. Here $(R^{n}\pi_*\mathcal{F})'$ is the dual vector space of $R^{n}\pi_*\mathcal{F}$. Summing up all $k$, one gets an isomorphism
$$
(R^{n}\pi_*\mathcal{F})'\xrightarrow{\sim} \operatorname{Ext}_{\mathcal{O}}^{2n}(R^{n}\pi_*\mathcal{F},\mathcal{O})
$$
Composing this isomorphism and the edge morphism above, we get a dualizing map
$$e_\mathcal{F}:(R^{n}\pi_*\mathcal{F})'\rightarrow R^n\pi_*\mathcal{F}^\vee
$$
Our theorem characterizes maximal Cohen-Macaulay using this dualizing map. This generalizes Corollary 4.3 in \cite{xu2026constructingmaximalcohenmacaulaysheaves}.

\begin{theorem}\label{isolatedsymplecticMCM}
Suppose $X$ is a $2n$-dimensional variety with only isolated singularities and $\pi:\widetilde{X}\rightarrow X$ is a symplectic resolution. For a vector bundle $\mathcal{F}$ on $\widetilde{X}$, $\pi_*\mathcal{F}$ is maximal Cohen-Macaulay if and only if
\begin{enumerate}
\item $R^i\pi_*\mathcal{F}=R^i\pi_*\mathcal{F}^\vee=0$ for $1\leq i\leq n-1$,
\item $e_{\mathcal{F}}:(R^{n}\pi_*\mathcal{F})'\rightarrow R^n\pi_*\mathcal{F}^\vee$ is an isomorphism.
\end{enumerate}
\end{theorem}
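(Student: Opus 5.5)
Since being maximal Cohen--Macaulay is a local property, and $\pi$ is an isomorphism away from the finitely many singular points, I will assume $X=\operatorname{Spec}\mathcal{O}$ is affine. Put $M=\pi_*\mathcal{F}$. By Proposition \ref{GorenMCM}, $M$ is maximal Cohen--Macaulay if and only if $\operatorname{Ext}^i_{\mathcal{O}}(M,\mathcal{O})=0$ for all $i>0$. The plan is to read both implications off the spectral sequence of Lemma \ref{specsequence}, after first showing that almost all of its $E_2$-page vanishes.

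\textbf{Step 1: shape of the $E_2$-page.} I write $E_2^{i,-q}=\operatorname{Ext}^i_{\mathcal{O}}(R^q\pi_*\mathcal{F},\mathcal{O})$, which converges to $\operatorname{Ext}^{i-q}_{\widetilde{\mathcal{O}}}(\mathcal{F},\widetilde{\mathcal{O}})=H^{i-q}(\mathcal{F}^\vee)=R^{i-q}\pi_*\mathcal{F}^\vee$.
\begin{itemize}
\item By Proposition \ref{sympdim} (or semismallness of symplectic resolutions), the fibers of $\pi$ have dimension $\le n$. Hence $R^{q}\pi_*$ vanishes for $q>n$, and so $H^k(\mathcal{F}^\vee)=0$ for $k>n$.
\item For $q\ge 1$ the sheaf $R^q\pi_*\mathcal{F}$ is supported on the isolated singular points, so it has finite length.
\item $\mathcal{O}$ is Gorenstein of dimension $2n$, so $\operatorname{Ext}^i(R^q\pi_*\mathcal{F},\mathcal{O})=0$ for $i\neq 2n$, and $\operatorname{Ext}^{2n}(R^q\pi_*\mathcal{F},\mathcal{O})\cong (R^q\pi_*\mathcal{F})'$ by the local duality isomorphism recalled before the theorem.
\end{itemize}
So the only possibly nonzero terms are the row $E_2^{i,0}=\operatorname{Ext}^i(M,\mathcal{O})$ for $0\le i\le 2n$, and the column entries $E_2^{2n,-q}\cong(R^q\pi_*\mathcal{F})'$ for $1\le q\le n$.

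The only possibly nonzero differentials are $d_{q+1}\colon \operatorname{Ext}^{2n-q-1}(M,\mathcal{O})\to E^{2n,-q}$. The term $E^{2n,-q}$ contributes to total degree $2n-q$. In total degree $k$, the piece $E_\infty^{2n,2n-k}$ is the deepest filtration step, i.e.\ a subobject of $H^k(\mathcal{F}^\vee)$, while $E_\infty^{k,0}$ is the quotient. This is exactly the edge inclusion used to define $e_\mathcal{F}$.

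\textbf{Step 2: MCM implies (1) and (2).} If $\operatorname{Ext}^{>0}(M,\mathcal{O})=0$, then every differential has zero source, because its source has degree $2n-q-1\ge n-1\ge 1$ when $n\ge 2$. For $n=1$ the source $\operatorname{Hom}(M,\mathcal{O})$ could map by $d_2$ to $E^{2,-1}$; I will check this separately, and expect it to vanish for depth reasons, since $\operatorname{Hom}(M,\mathcal{O})$ is reflexive and $d_2$ lands in a finite-length module.

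Thus $E_2=E_\infty$, and this gives:
\begin{itemize}
\item $H^{2n-q}(\mathcal{F}^\vee)\cong(R^q\pi_*\mathcal{F})'$ for $1\le q\le n$.
\item For $q\le n-1$ the degree $2n-q>n$, so the left side is $0$; hence $R^q\pi_*\mathcal{F}=0$ for $1\le q\le n-1$.
\item For $1\le k\le n-1$, no $E_\infty$ term has total degree $k$, since $2n-q=k$ would need $q>n$. Hence $R^k\pi_*\mathcal{F}^\vee=0$.
\item In degree $n$ the only piece is $E^{2n,-n}$, so the edge map is an isomorphism, i.e.\ $e_\mathcal{F}$ is an isomorphism.
\end{itemize}

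\textbf{Step 3: (1) and (2) imply MCM.} By (1), the rows $q=1,\dots,n-1$ vanish. The only remaining differential is $d_{n+1}\colon\operatorname{Ext}^{n-1}(M,\mathcal{O})\to E_2^{2n,-n}$, and the edge map is $E_2^{2n,-n}\twoheadrightarrow\operatorname{coker}d_{n+1}=E_\infty^{2n,-n}\hookrightarrow H^n(\mathcal{F}^\vee)$.
\begin{itemize}
\item For $i\ge1$ with $i\notin\{n-1,n\}$, $\operatorname{Ext}^i(M,\mathcal{O})=E_\infty^{i,0}=H^i(\mathcal{F}^\vee)$. This is $0$ by (1) if $i<n$, and by the fiber-dimension bound if $i>n$.
\item Injectivity of $e_\mathcal{F}$ forces $d_{n+1}=0$. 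Then $\operatorname{Ext}^{n-1}(M,\mathcal{O})=E_\infty^{n-1,0}\cong H^{n-1}(\mathcal{F}^\vee)$, which is $0$ by (1) when $n\ge2$. When $n=1$ this is just $\operatorname{Hom}$ and needs no vanishing.
\item Surjectivity of $e_\mathcal{F}$ makes $E_\infty^{2n,-n}$ all of $H^n(\mathcal{F}^\vee)$, so the quotient $E_\infty^{n,0}=\operatorname{Ext}^n(M,\mathcal{O})$ is $0$.
\end{itemize}
Hence $\operatorname{Ext}^{>0}(M,\mathcal{O})=0$ and $M$ is maximal Cohen--Macaulay.

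I expect the main obstacle to be bookkeeping rather than ideas. I must verify that the edge morphism in the definition of $e_\mathcal{F}$ really is the filtration subobject $F^{2n}H^n$, so that injectivity of $e_\mathcal{F}$ is equivalent to $d_{n+1}=0$ and surjectivity is equivalent to $E_\infty^{n,0}=0$. I must also handle the degenerate case $n=1$, where the rows $E^{\bullet,0}$ and $E^{\bullet,-n}$ interact more tightly.
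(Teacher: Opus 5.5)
Your argument is essentially the paper's. You reduce to an affine (or local) $X$ and observe that the $E_2$-page of the spectral sequence in Lemma~\ref{specsequence} consists only of the row $\operatorname{Ext}^i(\pi_*\mathcal F,\mathcal O)$ and the column $E_2^{2n,-q}\cong(R^q\pi_*\mathcal F)'$, $1\le q\le n$. The only differentials are the $d_{q+1}$ from the row to the column, and you read both implications off the abutment filtration. The paper splices these pieces into one long exact sequence instead. Your Step~3, which first uses (1) to kill rows $1,\dots,n-1$ so that only $d_{n+1}$ survives, is a slightly cleaner version of the same bookkeeping. For $n\ge 2$ both directions are correct as written; the one slip is an index: the column piece in total degree $k$ is $E_\infty^{2n,k-2n}$, not $E_\infty^{2n,2n-k}$.

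What you should drop is the $n=1$ aside in Step~2. The hope that $d_2\colon\operatorname{Hom}(M,\mathcal O)\to(R^1\pi_*\mathcal F)'$ vanishes for depth reasons is unfounded: a reflexive module can map onto a finite-length module, e.g.\ $\mathcal O\to\mathcal O/\mathfrak m$. In fact the forward implication is false for $n=1$. Take $X$ the $A_1$ surface singularity, $\widetilde X=T^*\mathbb P^1$, $E$ the zero section and $\mathcal F=\mathcal O_{\widetilde X}(E)$. By normality $\pi_*\mathcal F=\mathcal O_X$, which is maximal Cohen--Macaulay. But $0\to\mathcal O_{\widetilde X}\to\mathcal F\to\mathcal O_E(E)\cong\mathcal O_{\mathbb P^1}(-2)\to0$ together with rationality gives $R^1\pi_*\mathcal F\cong\mathbb C$. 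On the other hand, $0\to\mathcal F^\vee\to\mathcal O_{\widetilde X}\to\mathcal O_E\to0$ gives $\pi_*\mathcal F^\vee=\mathfrak m$ and $R^1\pi_*\mathcal F^\vee=0$. So $e_{\mathcal F}\colon\mathbb C\to0$ is not an isomorphism, and correspondingly $d_2\colon\mathcal O_X\to\mathbb C$ is nonzero with kernel $H^0(\mathcal F^\vee)=\mathfrak m$. The theorem must therefore be read with $n\ge 2$. The paper's proof also uses this implicitly, when it deduces $\operatorname{Ext}^{n-1}(\pi_*\mathcal F,\mathcal O)=0$ from the maximal Cohen--Macaulay hypothesis. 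For $n=1$ your spectral-sequence analysis instead shows that $\pi_*\mathcal F$ is maximal Cohen--Macaulay if and only if $e_{\mathcal F}$ is surjective.
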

\begin{proof}
Suppose $\mathcal{O}$ and $\mathcal{\widetilde{O}}$ are the structure sheaves of $X$ and $\widetilde{X}$ respectively. Similar to what we have discussed above, suppose $\{P_1,P_2,...,P_N\}$ are all singularities of $X$, then $R^{i}\pi_*\mathcal{F}$ is supported only on those closed points for any $i>0$, so $R^{i}\pi_*\mathcal{F}=\oplus_{k=1}^{N}(R^{i}\pi_*\mathcal{F})_{P_k}$. Since $\pi$ is proper, $(R^{i}\pi_*\mathcal{F})_{P_k}$ is a finite generated, hence finite length $\mathcal{O}_{P_k}$ module. Then by Proposition \ref{Gorat}, the Gorenstein property implies $\operatorname{Ext}_{\mathcal{O}_{P_k}}^{j}((R^{i}\pi_*\mathcal{F})_{P_k},\mathcal{O}_{P_k})=0$ for $j\neq2n$ and  $\operatorname{Ext}_{\mathcal{O}_{P_k}}^{2n}((R^{i}\pi_*\mathcal{F})_{P_k},\mathcal{O}_{P_k})\cong(R^{i}\pi_*\mathcal{F})'_{P_k}$. By Proposition \ref{sympdim}, all exceptional fibers have dimension $n$, so $R^{>n}\pi_*\mathcal{F}=0$ by the Theorem on Formal Functions (cf. \cite{Hartshorne1977}, Theorem 11.1). Applying the spectral sequence in \ref{specsequence} to the local resolution $\widetilde{X}_{P_k}\rightarrow X_{P_k}=\operatorname{Spec}\mathcal{O}_{P_k}$ where $\widetilde{X}_{P_k}=\widetilde{X}\times_X X_{P_k}$, we get the following second page,
\[
\begin{tikzcd}[row sep=0.5em, column sep=0.7em]
(\pi_*\mathcal F)^\vee
  & \operatorname{Ext}_{\mathcal{O}_{P_k}}^1(\pi_*\mathcal F,\mathcal{O}_{P_k})
  & \cdots
  & \operatorname{Ext}_{\mathcal{O}_{P_k}}^{2n-1}(\pi_*\mathcal F,\mathcal{O}_{P_k})
  & \operatorname{Ext}_{\mathcal{O}_{P_k}}^{2n}(\pi_*\mathcal F,\mathcal{O}_{\mathcal{O}_{P_k}})
\\
0 & 0 & \cdots & 0 & (R^1\pi_*\mathcal F)'
\\
\vdots & \vdots & \ddots & \vdots & \vdots
\\
0 & 0 & \cdots & 0 & (R^n\pi_*\mathcal F)'
\end{tikzcd}
\]

One can check that for any $2\leq m \leq n+1$, the only nonzero $m$-step differential $d^{m}$ is from $\operatorname{Ext}^{2n-m}_{\mathcal{O}_{P_k}}(\pi_*\mathcal F,\mathcal{O}_{P_k})$ to $(R^{m-1}\pi_*\mathcal{F})'$ and that the spectral sequence stabilizes after $n+1$ steps. The pattern of the differentials is like
\[
\begin{tikzcd}[row sep=1em, column sep=1em]
\ast & \ast & \cdots & \ast\arrow[rrrrrdddd,"d_{n+1}"] & \cdots\arrow[rrrrddd,"\cdots"]& \ast\arrow[rrrdd,"d_3"] & \ast\arrow[rrd,"d_2"] & \ast & \ast \\
0 & 0 & \cdots & 0 & \cdots &   &   &   & \ast \\
0 & 0 & \cdots & 0 & \cdots &   &   &   & \ast \\
\vdots & \vdots & \ddots & \vdots & \ddots & \vdots & \vdots & \vdots & \vdots\\
0 & 0 & \cdots & 0 & \cdots & 0 & 0 & 0 & \ast
\end{tikzcd}
\]

For $2\leq m\leq n+1$, the differentials give us exact sequences:
$$0\rightarrow \ker \,d^m\rightarrow \operatorname{Ext}^{2n-m}_{\mathcal{O}_{P_k}}(\pi_*\mathcal F,\mathcal{O}_{P_k})\rightarrow \text{im} \,d^m\rightarrow 0
$$
Denote the structure sheaf on $\widetilde{X}_{P_k}$ by $\widetilde{\mathcal{O}}_{P_k}$, we have an identification of $\operatorname{Ext}_{\widetilde{\mathcal{O}}_{P_k}}^{2n-m+1}(\mathcal{F},\widetilde{\mathcal{O}}_{P_k})$ with $H^{2n-m+1}(\mathcal{F}^\vee,{\widetilde{X}_{P_k}})=(R^{2n-m+1}\pi_*\mathcal{F}^\vee)_{P_k}$ . For $3\leq m\leq n+1$, the convergence in \ref{specsequence} gives us exact sequences 
$$0\rightarrow(R^{m-1}\pi_*\mathcal{F})'_{P_k}/\text{im} \,d^m\rightarrow (R^{2n-m+1}\pi_*\mathcal{F}^\vee)_{P_k}\rightarrow \ker d^{m-1}\rightarrow 0
$$
and for $m=2$, a single short exact sequence
$$
0\rightarrow(R^{1}\pi_*\mathcal{F})'_{P_k}/\text{im} \,d^2\rightarrow (R^{2n-1}\pi_*\mathcal{F}^\vee)_{P_k}\rightarrow\operatorname{Ext}^{2n-1}_{\mathcal{O}_{P_k}}(\pi_*\mathcal F,\mathcal{O}_{P_k}) \rightarrow 0
$$
One also gets $\ker d^{n+1}=(R^{n-1}\pi_*\mathcal{F}^\vee)_{P_k}$ considering the $i+j=n-1$ diagonal. 
Now one can actually integrate these short exact sequences into a long exact sequence
\[
\begin{tikzcd}[row sep=1.3em, column sep=2.4em]
0 \arrow[r]
& (R^{n-1}\pi_*\mathcal{F}^\vee)_{P_k} \arrow[r]
& \operatorname{Ext}^{n-1}_{\mathcal{O}_{P_k}}(\pi_*\mathcal F,\mathcal{O}_{P_k}) \arrow[r,"d^{n+1}"]
& (R^{n}\pi_*\mathcal{F})' \arrow[r,"(e_{\mathcal{F}})_{P_k}"]
& (R^{n}\pi_*\mathcal{F}^\vee)_{P_k}\arrow[lld,out=0,in=180,looseness=1.1]
\\
& 
& \operatorname{Ext}^{n}_{\mathcal{O}_{P_k}}(\pi_*\mathcal F,\mathcal{O}_{P_k}) \arrow[r,"d^{n}"]
& (R^{n-1}\pi_*\mathcal{F})' \arrow[r]
& (R^{n+1}\pi_*\mathcal{F}^\vee)_{P_k}\arrow[lld,out=0,in=180,looseness=1.1]
\\
&
&
\quad\quad\qquad\cdots\qquad\quad \arrow[r]
& \cdots \arrow[r]
& \quad\qquad\cdots\qquad\ \arrow[lld,out=0,in=180,looseness=1.1]
\\
&
&
\operatorname{Ext}^{2n-2}_{\mathcal{O}_{P_k}}(\pi_*\mathcal F,\mathcal{O}_{P_k}) \arrow[r,"d^2"]
& (R^{1}\pi_*\mathcal{F})' \arrow[r]
& (R^{2n-1}\pi_*\mathcal{F}^\vee)_{P_k}\arrow[lld,out=0,in=180,looseness=1.1]
\\
&
&
\operatorname{Ext}^{2n-1}_{\mathcal{O}_{P_k}}(\pi_*\mathcal F,\mathcal{O}_{P_k}) \arrow[r]
& 0
&
\end{tikzcd}
\]
We also have 
$\operatorname{Ext}^{2n}_{\mathcal{O}_{P_k}}(\pi_*\mathcal F,\mathcal{O}_{P_k})\cong (R^{2n}\pi_*\mathcal{F}^\vee)_{P_k}$
and $\operatorname{Ext}^{i}_{\mathcal{O}_{P_k}}(\pi_*\mathcal F,\mathcal{O}_{P_k})\cong (R^{i}\pi_*\mathcal{F}^\vee)_{P_k}$ for $1\leq i\leq n-2$ from that convergence of spectral sequence. Again by the Theorem on Formal Functions, $R^{>n}\pi_*\mathcal{F}^\vee=0$, so $\operatorname{Ext}^{2n}_{\mathcal{O}_{P_k}}(\pi_*\mathcal F,\mathcal{O}_{P_k})=\operatorname{Ext}^{2n-1}_{\mathcal{O}_{P_k}}(\pi_*\mathcal F,\mathcal{O}_{P_k})=0$ and $d^m:\operatorname{Ext}^{2n-m}_{\mathcal{O}_{P_k}}(\pi_*\mathcal F,\mathcal{O}_{P_k})\rightarrow(R^{m-1}\pi_*\mathcal{F})'$ are isomorphisms for $3\leq m\leq n-1$.

By Proposition \ref{GorenMCM}, $\pi_*\mathcal{F}$ is maximal Cohen-Macaulay if and only if $\mathcal{E}xt^{>0}_{\mathcal{O}}(\pi_*\mathcal F,\mathcal{O})=0$. Now $\mathcal{F}$ is a vector bundle on smooth locus, so these $\mathcal{E}xt$ sheaves only support on $P_1, P_2,...,P_n$. Hence the maximal Cohen-Macaulay condition is also equivalent to $\operatorname{Ext}^{>0}_{\mathcal{O}_{P_k}}(\pi_*\mathcal F,\mathcal{O}_{P_k})=0$ for any $k$. Now assume this holds, we see that $(R^{i}\pi_*\mathcal{F}^\vee)_{P_k}=\operatorname{Ext}^{i}_{\mathcal{O}_{P_k}}(\pi_*\mathcal F,\mathcal{O}_{P_k})=0$ for $1\leq i\leq n-2$. From the long exact sequence, we see that $(R^{n-1}\pi_*\mathcal{F}^\vee)_{P_k}=0$, $(R^{i}\pi_*\mathcal{F})_{P_k}$ for $1\leq i\leq n-1$ and $(e_{\mathcal{F}})_{P_k}:(R^{n}\pi_*\mathcal{F})'_{P_k}\rightarrow(R^{n}\pi_*\mathcal{F}^\vee)_{P_k}$
is an isomorphism. This works for all $P_k$, hence (1), (2) hold.

Conversely, suppose (1), (2) hold, then $\operatorname{Ext}^{i}_{\mathcal{O}_{P_k}}(\pi_*\mathcal F,\mathcal{O}_{P_k})\cong (R^{i}\pi_*\mathcal{F}^\vee)_{P_k}=0$ for $1\leq i\leq n-2$, $\operatorname{Ext}^{n-1}_{\mathcal{O}_{P_k}}(\pi_*\mathcal F,\mathcal{O}_{P_k})=\ker (e_{\mathcal{F}})_{P_k}=0$, $\operatorname{Ext}^{n}_{\mathcal{O}_{P_k}}(\pi_*\mathcal F,\mathcal{O}_{P_k})=\operatorname{coker} (e_{\mathcal{F}})_{P_k}=0$, and $\operatorname{Ext}^{i}_{\mathcal{O}_{P_k}}(\pi_*\mathcal F,\mathcal{O}_{P_k})=(R^{2n-1-i}\pi_*\mathcal{F})'_{P_k}=0$ for $n\leq i+1\leq 2n-2$. Since we already have $\operatorname{Ext}^{2n}_{\mathcal{O}_{P_k}}(\pi_*\mathcal F,\mathcal{O}_{P_k})=\operatorname{Ext}^{2n-1}_{\mathcal{O}_{P_k}}(\pi_*\mathcal F,\mathcal{O}_{P_k})=0$, we conclude that $\operatorname{Ext}^{>0}_{\mathcal{O}_{P_k}}(\pi_*\mathcal F,\mathcal{O}_{P_k})=0$, so $\pi_*\mathcal{F}$ is maximal Cohen-Macaulay. 

In summary, we have proven that $\pi_*\mathcal{F}$ is maximal Cohen-Macaulay if and only if (1), (2) hold.
\end{proof}
\begin{corollary}\label{affinevectorMCM}
Suppose $X$ is an affine $2n$-dimensional variety with only isolated singularities and $\pi:\widetilde{X}\rightarrow X$ is a symplectic resolution. If $\mathcal{F}$ is a vector bundle on $\widetilde{X}$ such that $\pi_*\mathcal{F}$ is maximal Cohen-Macaulay, then $H^i(\mathcal{F})=H^{i}(\mathcal{F}^\vee)=0$ for $1\leq i\leq n-2$ and $\dim_{\mathbb{C}}H^n(\mathcal{F})=\dim_{\mathbb{C}}H^n(\mathcal{F}^\vee)$.
\end{corollary}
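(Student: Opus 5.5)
The plan is to deduce the corollary directly from Theorem \ref{isolatedsymplecticMCM} by translating statements about the higher direct images $R^i\pi_*$ into statements about global cohomology. This is possible because $X$ is affine.

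\emph{Step 1: identify cohomology with global sections of direct images.} Since $X=\operatorname{Spec}\mathcal{O}$ is affine, the Leray spectral sequence for $\pi$ degenerates. Quasi-coherent sheaves on an affine scheme have no higher cohomology, so
\[
H^i(\widetilde{X},\mathcal{F})=\Gamma(X,R^i\pi_*\mathcal{F}), \qquad H^i(\widetilde{X},\mathcal{F}^\vee)=\Gamma(X,R^i\pi_*\mathcal{F}^\vee)
\]
for all $i$ (see \cite{Hartshorne1977}, III.8.5). A coherent sheaf on an affine scheme vanishes if and only if its global sections vanish.

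\emph{Step 2: apply the theorem.} If $\pi_*\mathcal{F}$ is maximal Cohen-Macaulay, condition (1) of Theorem \ref{isolatedsymplecticMCM} gives $R^i\pi_*\mathcal{F}=R^i\pi_*\mathcal{F}^\vee=0$ for $1\leq i\leq n-1$. By Step 1 this yields $H^i(\mathcal{F})=H^i(\mathcal{F}^\vee)=0$ in that range, which in particular covers the stated range $1\leq i\leq n-2$.

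\emph{Step 3: the dimension equality.} Condition (2) says $e_\mathcal{F}:(R^n\pi_*\mathcal{F})'\to R^n\pi_*\mathcal{F}^\vee$ is an isomorphism. Here we use the structure recalled before the theorem: $R^n\pi_*\mathcal{F}$ is supported on the finitely many singular points $P_k$, and its stalks are finite-length modules. Hence $R^n\pi_*\mathcal{F}$ is a finite-dimensional $\mathbb{C}$-vector space equal to $\bigoplus_k(R^n\pi_*\mathcal{F})_{P_k}$, and the same holds for $\mathcal{F}^\vee$. Taking global sections and using Step 1, we get
\[
\dim H^n(\mathcal{F})=\dim (R^n\pi_*\mathcal{F})'=\dim R^n\pi_*\mathcal{F}^\vee=\dim H^n(\mathcal{F}^\vee),
\]
which is finite.

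\emph{Main obstacle.} The only delicate point is bookkeeping: the symbols $R^n\pi_*\mathcal{F}$ and $(R^n\pi_*\mathcal{F})'$ must be read as their modules of global sections, which are finite-dimensional. This is justified by properness of $\pi$ together with isolatedness of the singularities, exactly as in the discussion preceding the theorem. No further spectral sequence arguments are needed.
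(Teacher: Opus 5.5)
Your proposal is correct and is essentially the intended argument. The paper gives no separate proof and treats this corollary as an immediate consequence of Theorem~\ref{isolatedsymplecticMCM}, using exactly your two observations: on affine $X$ one has $H^i(\widetilde{X},-)=\Gamma(X,R^i\pi_*-)$, and $R^n\pi_*\mathcal{F}$, $R^n\pi_*\mathcal{F}^\vee$ are finite-dimensional and supported at the singular points, so the isomorphism $e_{\mathcal{F}}$ forces equal dimensions. Your argument in fact yields vanishing for $1\leq i\leq n-1$, which is what condition (1) provides.
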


This corollary and Corollary \ref{VanishingthenMCM} above are extremely useful for us to determine maximal Cohen-Macaulay divisors on symplectic resolutions, as we will show in the following contexts.

For isolated singularities on affine varieties, It's also a good way to determine maximal Cohen-Macaulay sheaves using local cohomology. Recall that a coherent sheaf is called \textit{reflexive} if the natural map $\mathcal{F}\rightarrow\mathcal{F}^{\vee\vee}$ is an isomorphism. There are well-known properties for reflexivity: For a reflexive sheaf $\mathcal{F}$ on a normal Cohen-Macaulay scheme $X$, we have
\begin{enumerate}
\item $F$ is torsion free. Furthermore, $\mathcal{F}$ is reflexive if and only if $\mathcal{F}$ satisfies the $S^2$ condition, i.e., $\operatorname{depth}\mathcal{F}_P\geq\min\{2,\operatorname{codim \overline{P}}\}$ for any point $P\in X$. So one can see maximal Cohen-Macaulay sheaves are reflexive when $\dim X\geq 2$.
\item For any closed subscheme $Y$ with $\operatorname{codim}_X Y\geq 2$, we have $\mathcal{F}=i_*(\mathcal{F}|_{X\setminus Y})$, where $i: X\setminus Y\hookrightarrow X$ is the open embedding. 
\item For any coherent sheaf $\mathcal{G}$, its dual $\mathcal{F}=\mathcal{G}^\vee$ is reflexive. 
\item Reflexive sheaves of rank 1 are in one-to-one correspondence with the Weil divisors on $X$.
\end{enumerate}

\begin{proposition}\label{localseqMCM}
Suppose $X$ is a Cohen-Macaulay affine variety with dimension $n\geq 2$ and $P\in X$ is a closed point. For a reflexive sheaf $\mathcal{F}$ on $X$, $\mathcal{F}$ is maximal Cohen-Macaulay if and only if 
$H^i(\mathcal{F},X\setminus P)=0$ for $1\leq i\leq n-2$.
\end{proposition}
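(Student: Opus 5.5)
The plan is to pass to local cohomology at $P$ and then use the excision long exact sequence to translate depth conditions into cohomology of $U=X\setminus P$. Write $R=\mathcal{O}(X)$ and $M=\Gamma(X,\mathcal{F})$, and let $\mathfrak{m}$ be the maximal ideal of $P$.

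First I would reduce to the stalk at $P$. The statement is implicitly about $P$ being the only point where maximal Cohen-Macaulayness can fail; in the application $P$ is the isolated singularity, and $\mathcal{F}$ is locally free (hence maximal Cohen-Macaulay) away from $P$. I would make this hypothesis explicit. By Proposition \ref{locohMCM}, $\mathcal{F}$ is then maximal Cohen-Macaulay if and only if $H^i_{\mathfrak m}(M_{\mathfrak m})=0$ for $i<n$. Since $\mathfrak m$-torsion modules are unchanged by localizing at $\mathfrak m$, this equals $H^i_P(X,\mathcal{F})$.

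Next I would use the long exact sequence of local cohomology for the closed point $P$ in the affine variety $X$:
$$0\to H^0_P(\mathcal F)\to H^0(X,\mathcal F)\to H^0(U,\mathcal F)\to H^1_P(\mathcal F)\to H^1(X,\mathcal F)\to\cdots$$
Because $X$ is affine, $H^i(X,\mathcal{F})=0$ for $i>0$. This gives $H^{i+1}_P(\mathcal F)\cong H^i(U,\mathcal F)$ for $i\ge 1$. In degrees $0$ and $1$ there is the exact sequence $0\to H^0_P\to M\to H^0(U,\mathcal F)\to H^1_P\to 0$.

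The low degrees are handled by reflexivity. By property (1), $\mathcal{F}$ is torsion free, so $H^0_P=0$. By property (2), applied with $Y=P$ (which has codimension $n\ge 2$), $M\to H^0(U,\mathcal F)$ is an isomorphism, so $H^1_P=0$. It follows that $H^i_P(\mathcal F)=0$ for all $i<n$ if and only if $H^{i}(U,\mathcal F)=0$ for $1\le i\le n-2$. This is exactly the statement.

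The main obstacle is only bookkeeping. One point is identifying $H^i_P(X,-)$ with $H^i_{\mathfrak m}$ of the localized module; this is standard via excision and flatness of localization. The other is making sure the case $n=2$ is consistent: there the condition is vacuous, and reflexive sheaves on normal surfaces are indeed maximal Cohen-Macaulay by the $S_2$ characterization in property (1).
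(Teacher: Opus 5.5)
Your proposal is correct and follows the paper's proof essentially step by step. Affineness kills $H^{>0}(X,\mathcal F)$; reflexivity gives $H^0(X,\mathcal F)\cong H^0(X\setminus P,\mathcal F)$, so $H^0_P=H^1_P=0$; and the excision sequence then yields $H^{i+1}_P(X,\mathcal F)\cong H^i(X\setminus P,\mathcal F)$ for $1\le i\le n-2$. Your explicit assumption that $\mathcal F$ is maximal Cohen--Macaulay away from $P$ is a fair sharpening: the paper's argument likewise only proves the equivalence at $P$, and the statement as written needs this assumption, which holds automatically in its applications because $\mathcal F$ is locally free on $X\setminus P$ there.
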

\begin{proof}
Since $X$ is affine, we have $H^{>0}(X,\mathcal{F})=0. $ Consider the long exact sequence (cf. \cite{Hartshorne1977} III \S 2, Exercise 2.3 (e)):
\[
\begin{tikzcd}[column sep=1.4em, row sep=1em]
0 \arrow[r]
& H_P^0(X,\mathcal F) \arrow[r]
& H^0(X,\mathcal F) \arrow[r]
& H^0(X\setminus P,\mathcal F) \arrow[lld,out=0,in=180,looseness=1.2]& \ \\
& H_P^1(X,\mathcal F) \arrow[r]
& 0 \arrow[r]
& H^1(X\setminus P,\mathcal F) \arrow[lld,out=0,in=180,looseness=1.2]& \ \\
& H_P^2(X,\mathcal F) \arrow[r]
& \cdots &\ \\
&\  & \cdots&\qquad\qquad\qquad \arrow[lld,out=0,in=180,looseness=1.2]& \  \\
& H_P^n(X,\mathcal F) \arrow[r]
& 0 \arrow[r]
& H^n(X\setminus P,\mathcal F) \arrow[r] & 0
\end{tikzcd}
\]
By Proposition \ref{locohMCM}, $\mathcal{F}$ is maximal Cohen-Macaulay at $P$ if and only if $H^{<n}_P(X,\mathcal{F})=0$. Now $F$ is reflexive, so $\mathcal{F}=i_*(\mathcal{F}|_{X\setminus P})$, where $i: X\setminus P\hookrightarrow X$ is the open embedding. Since $X$ is affine, we conclude that  $H^0(X,\mathcal{F})\rightarrow H^0(X\setminus P,\mathcal{F})$ is an isomorphism, then $H_P^0(X,\mathcal F)=H_P^1(X,\mathcal F)=0$ and $H_P^i(X,\mathcal F)\cong H^{i-1}(X\setminus P,\mathcal{F})$ for $2\leq i\leq n-1$ by the long exact sequence above. Hence $\mathcal{F}$ is maximal Cohen-Macaulay if and only if $H^i(\mathcal{F},X\setminus P)=0$ for $1\leq i\leq n-2$.
\end{proof}

\subsection{Maximal Cohen-Macaulay divisors on $\mathcal{N}_{n+1,1}$}\label{MCMdivisoronA_n}
We now check our theory on the rank 1 reflexive sheaves with respect to the Weil divisors on $\mathcal{N}_{n+1,1}$, the minimal nilpotent orbit in $sl_{n+1}$ parameterizing nilpotent $(n+1)\times(n+1)$ matrices of rank $\leq 1$. We assume $n\geq 2$ since $n=1$ case is completely classified in \cite{ArtinVerdierEsnault1985}.

From the constructions in section \ref{miniorbit}, we know $\mathcal{N}_{n+1}$ admits a symplectic resolution $\pi: T^*\mathbb{P}^n\rightarrow\mathcal{N}_{n+1,1}$. So for $n\geq 2$, we have the following relations of Picard groups
$$\operatorname{Cl}\mathcal{N}_{n+1}=\operatorname{Cl}(\mathcal{N}_{n+1}\setminus 0)=\operatorname{Cl}(T^*\mathbb{P}^n\setminus s(\mathbb{P}^n))=\operatorname{Cl}(T^*\mathbb{P}^n)=\operatorname{Pic}\mathbb{P}^n=\mathbb{Z}
$$
where $s$ is the zero section. Now for a reflexive sheaf $\mathcal{F}$ on $\mathcal{N}_{n+1,1}$, define $\overline{\mathcal{F}}=(\pi^*\mathcal{F})^\vee$. From \cite{xu2026constructingmaximalcohenmacaulaysheaves} Lemma 3.4, we see that $\pi_*\overline{\mathcal{F}}=\mathcal{F}$. So if we suppose $D$ is the (positive) generator of $\operatorname{Pic}\mathcal{N}_{n+1}$, we have $(\pi^*\mathcal{O}(mD))^{\vee\vee}=\overline{\mathcal{O}(mD)}=\mathcal{O}_{T^*\mathbb{P}^n}(1)$ and $\pi_*\mathcal{O}_{T^*\mathbb{P}^n}(m)=\mathcal{O}(mD)$, where $\mathcal{O}_{T^*\mathbb{P}^n}(m)$ is the pullback of $\mathcal{O}_{\mathbb{P}^n}(m)$ along the projection. By Theorem \ref{isolatedsymplecticMCM} (or Corollary \ref{affinevectorMCM}), if $\mathcal{O}(mD)$ is maximal Cohen-Macaulay, we should have $H^i(\mathcal{O}_{T^*\mathbb{P}^n}(m))=H^1(\mathcal{O}_{T^*\mathbb{P}^n}(-m))=0$ for $1\leq i\leq n-1$ and $\dim_\mathbb{C} H^n(\mathcal{O}_{T^*\mathbb{P}^n}(m))=\dim_\mathbb{C}H^n(\mathcal{O}_{T^*\mathbb{P}^n}(-m))$.  Now we have the following computational lemma:
\begin{lemma}\label{MCMWD} For any integer $m$, we have 
\begin{enumerate}
\item $H^i(\mathcal{O}_{T^*\mathbb{P}^n}(m))=0$ for $1\leq i \leq n-2$,
\item $H^{n-1}(\mathcal{O}_{T^*\mathbb{P}^n}(m))=0$ if and only if $m\geq -n$, and in this case, we also have $H^n(\mathcal{O}_{T^*\mathbb{P}^n}(m))=0$.
\end{enumerate}
\end{lemma}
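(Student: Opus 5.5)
The approach is to reduce to cohomology of twisted symmetric powers of the tangent bundle on $\mathbb{P}^n$, and then to line bundles on $\mathbb{P}^n$ via the Euler sequence. The projection $p:T^*\mathbb{P}^n\to\mathbb{P}^n$ is affine, with $p_*\mathcal{O}_{T^*\mathbb{P}^n}=\operatorname{Sym}^\bullet T_{\mathbb{P}^n}=\bigoplus_{k\geq 0}\operatorname{Sym}^kT_{\mathbb{P}^n}$. By the projection formula, and since cohomology on a noetherian space commutes with direct sums, we get
$$H^i(\mathcal{O}_{T^*\mathbb{P}^n}(m))=\bigoplus_{k\geq 0}H^i(\mathbb{P}^n,\operatorname{Sym}^kT_{\mathbb{P}^n}(m)).$$
So it suffices to understand each summand separately.

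Write $V=\mathbb{C}^{n+1}$. The Euler sequence $0\to\mathcal{O}\to V\otimes\mathcal{O}(1)\to T_{\mathbb{P}^n}\to 0$ has a line subbundle on the left. Taking symmetric powers therefore gives, for $k\geq 1$, a short exact sequence
$$0\to \operatorname{Sym}^{k-1}V\otimes\mathcal{O}(k-1+m)\to\operatorname{Sym}^kV\otimes\mathcal{O}(k+m)\to\operatorname{Sym}^kT_{\mathbb{P}^n}(m)\to 0.$$
The $k=0$ summand is just $\mathcal{O}(m)$. Line bundles on $\mathbb{P}^n$ have cohomology only in degrees $0$ and $n$, and $H^n(\mathcal{O}(j))\neq 0$ exactly when $j\leq -n-1$. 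The long exact sequence then shows immediately that $H^i(\operatorname{Sym}^kT(m))=0$ for $1\leq i\leq n-2$, which proves (1). It also identifies
$$H^{n-1}(\operatorname{Sym}^kT(m))=\ker\bigl(\operatorname{Sym}^{k-1}V\otimes H^n(\mathcal{O}(k-1+m))\to\operatorname{Sym}^kV\otimes H^n(\mathcal{O}(k+m))\bigr),$$
and shows that $H^n(\operatorname{Sym}^kT(m))$ is a quotient of $\operatorname{Sym}^kV\otimes H^n(\mathcal{O}(k+m))$.

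For (2), first suppose $m\geq -n$. For $k\geq 1$ we have $k-1+m\geq -n$ and $k+m\geq -n$, so both line bundle terms have vanishing $H^n$. Hence $H^{n-1}$ and $H^n$ of every summand vanish, and the $k=0$ summand $\mathcal{O}(m)$ contributes nothing in degrees $n-1$ and $n$ either. This gives both $H^{n-1}=0$ and $H^n=0$.

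Conversely, suppose $m\leq -n-1$. The main point is choosing the right summand. A first guess would be $k=1$, but via Serre duality that map is dual to a multiplication map that is surjective for $m\leq -n-2$, so $k=1$ does not detect anything in general. Instead take $k=-m-n\geq 1$. Then $k-1+m=-n-1$ and $k+m=-n$. The target $H^n(\mathcal{O}(-n))$ vanishes, so the kernel is all of $\operatorname{Sym}^{k-1}V\otimes H^n(\mathcal{O}(-n-1))\cong\operatorname{Sym}^{k-1}V\neq 0$. Thus $H^{n-1}(\mathcal{O}_{T^*\mathbb{P}^n}(m))\neq 0$, completing the equivalence.

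I expect the only delicate points to be the bookkeeping of this choice of $k$ and the justification of the direct-sum decomposition for the infinite sum; everything else is routine.
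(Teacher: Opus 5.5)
Your proposal is correct and follows the paper's proof essentially step for step. It uses the same decomposition $H^i(\mathcal{O}_{T^*\mathbb{P}^n}(m))=\bigoplus_{k}H^i(\operatorname{Sym}^kT_{\mathbb{P}^n}(m))$ via the affine projection, the same symmetrized Euler sequence and long exact sequence, and the same choice $k=-m-n$ to show $H^{n-1}\neq 0$ when $m\leq -n-1$ (your side remark about why $k=1$ fails is accurate but not needed).
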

\begin{proof}
Firstly, there is a graded decomposition 
$$
H^i(\mathcal{O}_{T^*\mathbb{P}^2}(m))=H^i(\oplus_{j=0}^{\infty}S^j(T_{\mathbb{P}^2})(m))=\oplus_{j=0}^{\infty}H^i(S^j(T_{\mathbb{P}^2})(m))
$$
Now we compute each degree. Taking symmetric product of Euler sequence and twisting by $\mathcal{O}_{\mathbb{P}^2}(m)$, we get a short exact sequence:
$$
0\rightarrow\mathcal{O}_{\mathbb{P}^n}(j-1+m)^{\binom{j+n-1}{n}}\rightarrow\mathcal{O}_{\mathbb{P}^n}(j+m)^{\binom{j+n}{n}}\rightarrow S^j(T_{\mathbb{P}^n})(m)\rightarrow 0
$$
Take the long exact sequence of cohomology and use the vanishing of the middle cohomology for line bundles, we see easily that $H^i(S^j(T_{\mathbb{P}^n})(m))=0$ for any $1\leq i\leq n-2$. We also have the following exact sequence for $H^{n-1}$ and $H^n$,
$$0\rightarrow H^{n-1}(S^j(T_{\mathbb{P}^n})(m))\rightarrow H^n(\mathcal{O}_{\mathbb{P}^n}(j-1+m))^{\binom{j+n-1}{n}}\rightarrow H^n(\mathcal{O}_{\mathbb{P}^n}(j+m))^{\binom{j+n-1}{n}}\rightarrow H^n(S^j(T_{\mathbb{P}^n})(m))\rightarrow 0
$$
If $H^{n-1}(\mathcal{O}_{T^*\mathbb{P}^2}(m))=0$, we see that $H^{n-1}(S^j(T_{\mathbb{P}^n})(m))=0$ for all $j\geq 0$. Suppose conversely $m\leq-n-1$, then $-n-m\geq 1$. Now let $j=-n-m$, we have $H^n(\mathcal{O}_{\mathbb{P}^n}(j+m))^{\binom{j+n}{n}}=H^n(\mathcal{O}_{\mathbb{P}^n}(-n))^{\binom{-m}{n}}=0$ while $H^n(\mathcal{O}_{\mathbb{P}^n}(j-1+m))^{\binom{j+n-1}{n}}=H^n(\mathcal{O}_{\mathbb{P}^n}(-n-1))^{\binom{-m-1}{n}}=\mathbb{C}^{\binom{-m-1}{n}}$ where $-m-1\geq n+1-1=n$, so $\binom{-m-1}{n}\geq 1$. Hence $H^{n-1}(S^{-n-m}(T_{\mathbb{P}^n})(m))=H^n(\mathcal{O}_{\mathbb{P}^n}(-n-1))^{\binom{-m-1}{n}}=\mathbb{C}^{\binom{-m-1}{n}}\neq 0$, a contradiction. So we must have $m\geq -n$.

When $m\geq -n$, we see that $j-1+m\geq -n$ for all $j\geq 1$, so $H^{n-1}(S^j(T_{\mathbb{P}^n})(m))=0$ for any $j\geq 1$. For $j=0$, this is also $0$ since $\binom{j+n-1}{n}=\binom{n-1}{n}=0$, so $H^{n-1}(\mathcal{O}_{T^*\mathbb{P}^2}(m))=0$. Finally, in this case $j+m\geq 0-n=-n$, so $H^n(\mathcal{O}_{\mathbb{P}^n}(j+m))^{\binom{j+n-1}{n}}=0$, which implies $H^n(S^j(T_{\mathbb{P}^n})(m))=0$. Therefore, $H^{n}(\mathcal{O}_{T^*\mathbb{P}^2}(m))=0$.
\end{proof}
This lemma and Theorem \ref{isolatedsymplecticMCM} imply,
\begin{corollary}\label{A_nMCMdivisors}
The maximal Cohen-Macaulay Weil divisors on $\mathcal{N}_{n+1,1}$ are exactly the divisors corresponding to  reflexive sheaves $\mathcal{O}(mD)$ for $-n\leq m\leq n$, where $D$ is a generator for the Weil divisor group of $\mathcal{N}_{n+1,1}$.
\end{corollary}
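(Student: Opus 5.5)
The plan is to reduce the statement to the cohomological criterion of Theorem \ref{isolatedsymplecticMCM} and read off the answer from Lemma \ref{MCMWD}.

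\textbf{Reduction to line bundles.} Since $\operatorname{Cl}\mathcal{N}_{n+1,1}=\mathbb{Z}$ with generator $D$, every rank one reflexive sheaf has the form $\mathcal{O}(mD)$ for some $m\in\mathbb{Z}$. By the discussion before Lemma \ref{MCMWD}, $\mathcal{O}(mD)=\pi_*\mathcal{O}_{T^*\mathbb{P}^n}(m)$. It therefore suffices to decide, for the line bundle $\mathcal{F}=\mathcal{O}_{T^*\mathbb{P}^n}(m)$ with dual $\mathcal{F}^\vee=\mathcal{O}_{T^*\mathbb{P}^n}(-m)$, when conditions (1) and (2) of Theorem \ref{isolatedsymplecticMCM} hold. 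Here $X=\mathcal{N}_{n+1,1}$ is affine of dimension $2n$, and its only singular point is $0$. Since $X$ is affine, $R^i\pi_*\mathcal{F}=H^i(T^*\mathbb{P}^n,\mathcal{F})$.

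\textbf{Condition (1).} By Lemma \ref{MCMWD}(1), the groups $H^i$ vanish automatically for $1\le i\le n-2$, for both $\mathcal{F}$ and $\mathcal{F}^\vee$. The remaining requirement is
\[
H^{n-1}(\mathcal{O}(m))=H^{n-1}(\mathcal{O}(-m))=0.
\]
By Lemma \ref{MCMWD}(2), this holds exactly when $m\ge -n$ and $-m\ge -n$, that is, when $-n\le m\le n$. So if $|m|>n$, condition (1) fails and $\mathcal{O}(mD)$ is not maximal Cohen-Macaulay.

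\textbf{Condition (2).} Suppose $-n\le m\le n$. Lemma \ref{MCMWD}(2) gives $H^n(\mathcal{O}(m))=0$ and $H^n(\mathcal{O}(-m))=0$. Then $e_\mathcal{F}$ is a map between zero spaces, hence an isomorphism. Theorem \ref{isolatedsymplecticMCM} now shows that $\mathcal{O}(mD)$ is maximal Cohen-Macaulay. In fact, once (1) holds, all higher direct images $R^{>0}\pi_*$ of both $\mathcal{F}$ and $\mathcal{F}^\vee$ vanish, so Corollary \ref{VanishingthenMCM} would already suffice for this direction.

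\textbf{Expected difficulty.} Both implications follow directly, so there is no real obstacle. The only points needing care are the identification $\pi_*\mathcal{O}_{T^*\mathbb{P}^n}(m)=\mathcal{O}(mD)$, which is quoted from the earlier lemma, and the fact that dualizing $\mathcal{F}$ corresponds to replacing $m$ by $-m$.
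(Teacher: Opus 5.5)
Your proposal is correct and follows the paper's own argument. Both identify $\mathcal{O}(mD)=\pi_*\mathcal{O}_{T^*\mathbb{P}^n}(m)$, use Lemma \ref{MCMWD} to show that condition (1) of Theorem \ref{isolatedsymplecticMCM} holds exactly when $-n\le m\le n$, and then observe that $e_{\mathcal{F}}$ is a map between zero spaces. Your indexing by $H^{n-1}$ and $H^n$ is the correct one for general $n$; the paper's write-up uses $R^1$ and $R^2$, which matches only the case $n=2$.
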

\begin{proof}
Since $\mathcal{O}(mD)=\pi_*\mathcal{O}_{T^*\mathbb{P}^n}(m)$, it suffices to check the conditions in Theorem \ref{isolatedsymplecticMCM}. Firstly $R^1\pi_*\mathcal{O}_{T^*\mathbb{P}^n}(m)=R^1\pi_*\mathcal{O}_{T^*\mathbb{P}^n}(-m)=0$ implies $m\geq -n$, $-m\geq -n$, so $-n\leq m\leq n$. While in this case, by Lemma \ref{MCMWD} above, $R^2\pi_*\mathcal{O}_{T^*\mathbb{P}^n}(m)=R^2\pi_*\mathcal{O}_{T^*\mathbb{P}^n}(-m)=0$, so $e_{\mathcal{O}_{T^*\mathbb{P}^n}(m)}$ is a trivial isomorphism. Then we see that $\mathcal{O}(mD)=\pi_*\mathcal{O}_{T^*\mathbb{P}^n}(m)$ are all maximal Cohen-Macaulay Weil divisors.
\end{proof}
\section{Equivariant Maximal Cohen-Macaulay sheaves on minimal orbit closure of $\mathfrak{sl}_{n+1}$}
In \cite{xu2026constructingmaximalcohenmacaulaysheaves}, we construct some maximal Cohen-Macaulay sheaves on $\mathcal{N}_{3,1}$, the variety of $3\times 3$ nilpotent matrices with rank $\leq1$, but little classification work is done here. One can see the reason why the classification is so difficult: even for rank 2 sheaves, there are indecomposable maximal Cohen-Macaulay sheaves $\pi_*f^*V$ where $\pi:T^*\mathbb{P}^2\rightarrow \mathcal{N}_{3,1}$ is the symplectic resolution, $f:T^*\mathbb{P}^2\rightarrow \mathbb{P}^2$ is the projection map and $V$ is a vector bundle on $\mathbb{P}^2$ belonging to one of the following families
\begin{enumerate}
\item $T_{\mathbb{P}^2}(-1)$, $T_{\mathbb{P}^2}(-2)$.
\item Nontrivial extension of $I_x$ by $\mathcal{O}_{\mathbb{P}^2}$, where $I_x$ is the ideal sheaf of a closed point $x$ on $\mathbb{P}^2$.
\item Stable 2-bundles with $c_1=0$ and $c_2=1$.
\end{enumerate}
One can also construct indecomposable maximal Cohen-Macaulay modules of any rank $r\geq 2$ using Steiner bundles on $\mathbb{P}^2$. In this way one can also construct maximal Cohen-Macaulay sheaves on $\mathcal{N}_{n+1,1}$ with various rank. Details can be found in \cite{xu2026constructingmaximalcohenmacaulaysheaves}.

These constructions imply the high complexity of the classification problem of maximal Cohen-Macaulay sheaves on higher dimensional symplectic singularities. However, if we restrict our attention to those sheaves having a compatible $SL(n+1)$ action with the conjugation $SL(n+1)$ action on $\mathcal{N}_{n+1,1}$, namely the \textit{$SL(n+1)$-equivariant sheaves}, we will see that when the corresponding representation of the stabilizer subgroup is irreducible, the Cohen-Macaulay condition can be fully described using the characters of that representation. 
\subsection{Equivariant Sheaves and equivariant divisors}
We first introduce definition and basic properties of the objects we are going to study.
\begin{definition}
Let an algebraic group $G$ act on a scheme $X$, with action map $a:G\times X\to X$ and projection $p_2:G\times X\to X$. A \textit{$G$-equivariant coherent sheaf} on $X$ is a coherent sheaf $\mathcal{F}$ together with an isomorphism
$$
\phi:a^*\mathcal{F}\xrightarrow{\sim} p_2^*\mathcal{F}
$$
such that on $G\times G\times X$ one has
$$
(\mu\times \mathrm{id}_X)^*\phi
=
(\mathrm{id}_G\times p_2)^*\phi\circ (\mathrm{id}_G\times a)^*\phi,
$$
where $\mu:G\times G\to G$ is the multiplication map.
\end{definition}

For an algebraic group $G$ acting on a normal scheme $X$, we call a Weil divisor on $X$ \textit{$G$-equivariant} if the corresponding reflexive sheaf admits a $G$-equivariant structure.

As sheaves, one can find locally free resolutions for equivariant sheaves, as the following theorem from \cite{ChrissGinzburg1997} implies.
\begin{proposition}[\cite{ChrissGinzburg1997} Proposition 5.1.26]
Let $X$ be a normal quasi-projective $G$-variety and $\mathcal{L}$ be a very ample $G$-equivariant Cartier divisor.
Then any $G$-equivariant coherent sheaf $\mathcal{F}$ on $X$ is a quotient of some
$G$-equivariant locally free sheaf of the form $V\otimes(\mathcal{L}^\vee)^{\otimes n}$ for some finite dimensional $G$-representation $V$ and integer $n$.
\end{proposition}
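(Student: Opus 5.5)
We follow the standard argument from Chriss--Ginzburg. The plan has three steps: produce enough equivariant sections of a twist of $\mathcal{F}$, package them into an equivariant surjection, and check that this surjection respects the equivariant structures.

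First, we fix the $G$-linearization on $\mathcal{L}$. Since $\mathcal{L}$ is very ample, there is a $G$-stable, locally closed embedding $X\hookrightarrow \mathbb{P}(W)$ for a finite-dimensional representation $W\subset H^0(X,\mathcal{L})$. Replacing $X$ by its closure and $\mathcal{F}$ by its pushforward, we reduce to the projective case. We would note, and at worst prove as a lemma, that the pushforward of an equivariant coherent sheaf under a $G$-equivariant open immersion is quasi-coherent and equivariant. It is the union of its coherent equivariant subsheaves, each of which can be chosen to restrict to $\mathcal{F}$, by the standard extension argument. Normality of $X$ is used here, as in Sumihiro-type results, to guarantee that a linearization of some power of $\mathcal{L}$ exists; we will assume the given one.

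Second, on the projective $G$-variety $\overline{X}$ we use Serre's theorem. For $n\gg0$ the sheaf $\mathcal{F}\otimes\mathcal{L}^{\otimes n}$ is globally generated, and $H^0(\overline X,\mathcal{F}\otimes\mathcal{L}^{n})$ is finite dimensional. The equivariant structure $\phi$ makes this space a rational $G$-representation. In the quasi-projective, non-closure setting the space of sections is only a locally finite $G$-module. In that case, choose finitely many sections generating $\mathcal{F}\otimes\mathcal{L}^n$ at every point. This is possible by quasi-compactness, since $\mathcal{F}\otimes\mathcal{L}^n$ is generated by global sections for $n$ large in the affine-chart sense. Then let $V$ be the finite-dimensional $G$-submodule they span, which exists by local finiteness of rational $G$-modules.

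Third, the evaluation map $V\otimes\mathcal{O}_X\to\mathcal{F}\otimes\mathcal{L}^{n}$ is surjective and $G$-equivariant. Equivariance holds because the $G$-action on $V$ is induced by $\phi$: checking the compatibility on $G\times X$ amounts to the identity $a^*(\mathrm{ev})=p_2^*(\mathrm{ev})$ under $\phi$, which is the definition of the action on sections. Tensoring with $(\mathcal{L}^\vee)^{\otimes n}$ gives the desired equivariant surjection $V\otimes(\mathcal{L}^\vee)^{\otimes n}\to\mathcal{F}$.

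The main obstacle is the local finiteness and rationality of the $G$-action on global sections, together with verifying that $\phi$ really induces an algebraic representation. For this we would use the coaction $H^0(\mathcal{F}\otimes\mathcal{L}^n)\to\mathbb{C}[G]\otimes H^0(\mathcal{F}\otimes\mathcal{L}^n)$ obtained by composing $a^*$ with $\phi$ and using flat base change for $p_2$. The cocycle condition translates exactly into the comodule axioms, and local finiteness then follows formally for comodules.
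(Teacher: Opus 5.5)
Your argument is correct and is the standard proof behind this result. The paper gives no proof of its own and only cites Chriss--Ginzburg; the cited argument also twists until $\mathcal{F}\otimes\mathcal{L}^{\otimes n}$ is globally generated, uses that $H^0(X,\mathcal{F}\otimes\mathcal{L}^{\otimes n})$ is a locally finite $\mathbb{C}[G]$-comodule to find a finite-dimensional $G$-stable generating subspace $V$, and untwists the equivariant evaluation map $V\otimes\mathcal{O}_X\to\mathcal{F}\otimes\mathcal{L}^{\otimes n}$. The detour through the projective closure is unnecessary, since your direct argument on $X$ already suffices. That detour also costs more than it saves: it needs the heavier fact that $j_*\mathcal{F}$ is a union of $G$-equivariant coherent subsheaves, and only a sufficiently large such subsheaf restricts to $\mathcal{F}$, not each one as you wrote. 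Likewise, normality plays no role once $\mathcal{L}$ is given together with its $G$-linearization.
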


For an affine variety, the structure sheaf is very ample, so we can perform free resolutions of coherent sheaves. Then the higher syzygies are maximal Cohen-Macaulay $G$-equivariant sheaves. As we have a primary interest in classifying/constructing natural objects appearing in free resolutions, it worth studying  maximal Cohen-Macaulay $G$-equivariant sheaves in details.

Now for equivariant sheaves on homogeneous space, we have the following result.

\begin{proposition}\label{GEQUITHENVECT}
Suppose $G$ is an algebraic group and $H$ is its algebraic subgroup. The $G$-equivariant sheaves on $G/H$ are finite dimensional vector bundles and are one-to-one correspond to finite dimensional representations of $H$.
\end{proposition}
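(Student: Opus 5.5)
The plan is to build the two functors between $G$-equivariant coherent sheaves on $G/H$ and finite dimensional $H$-representations, show they are mutually inverse, and read off local freeness along the way. Write $q:G\to G/H$ for the quotient map and $x_0=eH$ for the base point.

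\emph{From representations to sheaves.} Given a finite dimensional representation $V$ of $H$, form the associated bundle $G\times_H V=(G\times V)/H$, where $H$ acts by $h\cdot(g,v)=(gh^{-1},hv)$. Since $q$ is a principal $H$-bundle, locally trivial in the fppf (indeed étale) topology, descent shows that $G\times_H V\to G/H$ is a vector bundle of rank $\dim V$. Left multiplication by $G$ on the first factor supplies the equivariant structure $\phi$. The cocycle condition holds because it already holds on $G\times V$ and descends.

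\emph{From sheaves to representations.} Given a $G$-equivariant coherent sheaf $\mathcal{F}$, take the fiber $V=\mathcal{F}\otimes k(x_0)$. Restricting $\phi$ to $H\times\{x_0\}\subset G\times G/H$ gives an isomorphism $\mathcal{O}_H\otimes V\cong\mathcal{O}_H\otimes V$. This is an algebraic family of linear maps, and the cocycle condition makes it a representation of $H$.

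\emph{Local freeness.} To see that $\mathcal{F}$ is locally free, pull it back along the action map. Consider $\sigma=a\circ(\mathrm{id}\times x_0):G\to G/H$, which is just $q$. Then $\phi$ gives
\[
q^*\mathcal{F}=a^*\mathcal{F}|_{G\times x_0}\cong p_2^*\mathcal{F}|_{G\times x_0}=\mathcal{O}_G\otimes V,
\]
so $q^*\mathcal{F}$ is free. Since $q$ is faithfully flat, $\mathcal{F}$ is flat and finitely presented, hence locally free. Moreover, under this trivialization the descent datum on $q^*\mathcal{F}$ is exactly the $H$-action on $V$. Faithfully flat descent along $q$ therefore identifies $\mathcal{F}\cong G\times_H V$, equivariantly. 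Conversely, the fiber of $G\times_H V$ at $x_0$ is $V$ with its original $H$-action.

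Both composites are thus naturally isomorphic to the identity, and the same argument applied to morphisms gives an equivalence of categories. In particular isomorphism classes correspond bijectively.

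I expect the main obstacle to be this last step: checking carefully that the descent datum induced from $\phi$ on $q^*\mathcal{F}=\mathcal{O}_G\otimes V$ coincides with the one defining $G\times_H V$. This is a diagram chase with the cocycle identity on $G\times G\times X$ restricted to $G\times H\times\{x_0\}$. I would carry it out by evaluating on points and using that everything is linear in $V$. Alternatively, one can cite the standard equivalence $\mathrm{Coh}^G(G/H)\simeq\mathrm{Rep}(H)$, for instance as in \cite{ChrissGinzburg1997}, rather than redo the descent.
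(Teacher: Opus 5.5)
Your proof is correct. The two constructions you use, the associated bundle and the fiber at $x_0$ with the $H$-action obtained by restricting $\phi$ to $H\times\{x_0\}$, are the same as the paper's. The one difference is that the paper uses the dual convention: $\mathcal{E}_V=G\times_H V^\vee$, and the representation is the \emph{dual} of the fiber at $[e]$. This only composes your bijection with the duality involution on representations, so it does not affect the statement, but it is the convention the later weight computations depend on.

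Where you genuinely differ is local freeness. The paper argues by generic local freeness plus transitivity. Over $\mathbb{C}$ the homogeneous space $G/H$ is a smooth variety, so $\mathcal{F}$ is locally free on a dense open $U$. Restricting $\phi$ to $\{g\}\times G/H$ gives $g^*\mathcal{F}\cong\mathcal{F}$, so the translates $gU$ cover $G/H$. You instead restrict $\phi$ to $G\times\{x_0\}$ to trivialize $q^*\mathcal{F}\cong\mathcal{O}_G\otimes V$, and then descend flatness along the faithfully flat map $q$.

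The paper's route is more elementary: it uses only generic freeness on a reduced variety and translation by closed points. But it leaves implicit why the two constructions are mutually inverse. Your route needs descent theory but is uniformly scheme-theoretic. Moreover, the same trivialization, combined with the cocycle identity on $G\times H\times\{x_0\}$, gives the descent datum identifying $\mathcal{F}\cong G\times_H V$ equivariantly. So your argument actually proves the bijection, which the paper only asserts.
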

\begin{proof}
The vector bundle statement follows from the generic local freeness and transitivity of the $G$-action. Given an $H$-representation $V$, denote its dual representation by $V^]\vee$, one can define a $G$-equivariant bundle on $G/H$ by 
$$G\times_H V^\vee=\{(g,f)|g\in G, f\in V^\vee\}/\sim$$
where $(g,f)\sim(gh^{-1},h.f)=(gh^{-1},f\circ h^{-1})$ for all $h\in H$. Conversely, for a $G$-equivariant vector bundle on $G/H$, one obtains a $H$ representation by taking the dual space of the fiber at $[e]$, the class of the identity element.
\end{proof}

The Borel--Weil--Bott theorem is the most useful tool computing cohomology on the flag varieties, which is the homogeneous space of $G$ quotient by a Borel subgroup. This relation actually exists for all parabolic groups. The following version of BWB theorem is cited from \cite{Akhiezer} \S 4.3.

\begin{theorem}[Borel--Weil--Bott for $G/P$]\label{BWBparabolic}Suppose $G$ is an complex semisimple group and $P$ is its parabolic subgroup.
Let $\mathcal{E}_{\lambda}=G\times_{P}V_\lambda^\vee$ be the $G$-equivariant vector bundle
over $Y=G/P$ corresponding to an irreducible representation
$P$ of highest weight $\lambda$.

\begin{enumerate}
\item[(i)] If $\lambda+\rho$ is singular, then $H^q(Y,\mathcal{E}_\lambda)=0$ for all $q\ge 0$.
\item[(ii)] If $\lambda+\rho$ is regular, then $H^q(Y,\mathcal{E}_\lambda)$ is nonzero if and only if $q=q_{\lambda}$,  in which case it is the dual of the irreducible representation of $G$ of highest weight $w(\lambda+\rho)-\rho$, where $w$ is the unique element of $W$ that makes $w(\lambda+\rho)$ dominant.
\end{enumerate}
\end{theorem}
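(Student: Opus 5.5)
This is the standard Borel--Weil--Bott theorem for parabolic quotients, cited from \cite{Akhiezer}. My plan is to reduce it to the Borel case $G/B$ and then apply the classical Borel--Weil--Bott theorem there.

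Fix a Borel subgroup $B\subset P$ and consider the fibration $p:G/B\to G/P$. Its fiber $P/B$ is the flag variety of the Levi factor $L$ of $P$. Since $\lambda$ is the highest weight of an irreducible $P$-representation, $\lambda$ is $L$-dominant, and the unipotent radical of $P$ acts trivially on $V_\lambda$. Let $\mathcal{L}_\lambda=G\times_B\mathbb{C}_{-\lambda}$ be the associated line bundle on $G/B$, with sign conventions chosen to match $\mathcal{E}_\lambda=G\times_P V_\lambda^\vee$.

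The first step is to apply Borel--Weil for the reductive group $L$ fiberwise. Restricted to each fiber $P/B=L/(B\cap L)$, we get $H^0(P/B,\mathcal{L}_\lambda)\cong V_\lambda^\vee$ as $P$-modules, and $H^{>0}(P/B,\mathcal{L}_\lambda)=0$ by Kempf vanishing for $L$-dominant weights. By cohomology and base change along the $G$-equivariant locally trivial fibration $p$, this gives $p_*\mathcal{L}_\lambda\cong\mathcal{E}_\lambda$ and $R^{>0}p_*\mathcal{L}_\lambda=0$.

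The Leray spectral sequence then collapses, so $H^q(G/P,\mathcal{E}_\lambda)\cong H^q(G/B,\mathcal{L}_\lambda)$ for every $q$. This reduces both (i) and (ii) to the classical Bott theorem on $G/B$. If $\lambda+\rho$ is singular, all cohomology vanishes. If $\lambda+\rho$ is regular, cohomology is concentrated in degree $q_\lambda=\ell(w)$, where $w$ is the element with $w(\lambda+\rho)$ dominant, and it equals the dual of the irreducible $G$-module of highest weight $w(\lambda+\rho)-\rho$.

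For the $G/B$ statement itself I would follow Demazure's short proof. Use induction on length via the $\mathbb{P}^1$-fibrations $G/B\to G/P_\alpha$ for simple roots $\alpha$. The key computation is on $\mathbb{P}^1$: there $H^0(\mathcal{O}(a))$ and $H^1(\mathcal{O}(-a-2))$ are isomorphic up to a twist. This gives $H^q(\mathcal{L}_\mu)\cong H^{q+1}(\mathcal{L}_{s_\alpha\cdot\mu})$ whenever $\langle\mu+\rho,\alpha^\vee\rangle>0$. It also gives total vanishing when $\langle\mu+\rho,\alpha^\vee\rangle=0$.

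I expect the main obstacle to be this $SL_2$ step: verifying that the isomorphism is $G$-equivariant, keeping track of the $\rho$-shifts, and keeping the dualization conventions for $V^\vee$ consistent with the statement. Since the paper cites this theorem rather than proving it, I would in practice state the reduction above and defer the $G/B$ case to the reference.
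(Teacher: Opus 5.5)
Your proposal is correct, and it is not really comparable to the paper's treatment, because the paper gives no proof of this statement. It quotes the theorem from \cite{Akhiezer} and uses it as a black box in all the later computations.

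What you supply is the standard argument behind that citation:
\begin{enumerate}
\item Since $U_P\subset B$ is normal in $P$ and acts trivially on the fibre of $\mathcal{L}_\lambda$, you get $H^0(P/B,\mathcal{L}_\lambda)\cong H^0(L/(B\cap L),\mathcal{L}_\lambda)\cong V_\lambda^\vee$ as $P$-modules.
\item Higher cohomology on the fibre vanishes, so $p_*\mathcal{L}_\lambda\cong\mathcal{E}_\lambda$ and $R^{>0}p_*\mathcal{L}_\lambda=0$.
\item Leray then gives $H^q(G/P,\mathcal{E}_\lambda)\cong H^q(G/B,\mathcal{L}_\lambda)$, where $\lambda$ need only be $L$-dominant and $\rho$, $W$ and regularity are all those of $G$.
\end{enumerate}
Your route gives a self-contained passage from the classical $G/B$ theorem to the parabolic version. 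It also pins down the quantity $q_\lambda$, which the statement leaves undefined, as $\ell(w)$. The paper's citation buys only brevity.

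On the conventions you flagged, the Borel $B\subset P$ must be the one of the positive roots. This holds for the block upper triangular parabolics used throughout the paper. With $\mathcal{L}_\nu=G\times_B\mathbb{C}_{-\nu}$, it is exactly what gives $H^0(G/P,\mathcal{E}_\lambda)\cong V_G(\lambda)^\vee$ for dominant $\lambda$, matching the ``dual'' in (ii).

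The $\mathbb{P}^1$ step you worried about also resolves cleanly. Serre duality with $K_{P_\alpha/B}\cong\mathcal{L}_{-\alpha}$ shows that $H^0(P_\alpha/B,\mathcal{L}_\mu)$ and $H^1(P_\alpha/B,\mathcal{L}_{s_\alpha\cdot\mu})$ are irreducible $L_\alpha$-modules with the same weights. So the $\rho$-shifted action absorbs the twist.

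If you do write out the $G/B$ case, remember that the Demazure induction only moves cohomology between Weyl chambers. You still need the dominant base case: Borel--Weil plus $H^{>0}(G/B,\mathcal{L}_\mu)=0$ for dominant $\mu$. This follows from Kodaira vanishing, since $\mathcal{L}_\mu\cong K_{G/B}\otimes\mathcal{L}_{\mu+2\rho}$ with $\mathcal{L}_{\mu+2\rho}$ ample. Your sketch omits this base case, though it is part of the $G/B$ theorem you defer to the reference.
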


However, our minimal orbits $\mathcal{O}_{\min}$ are not proper quotients of the corresponding group, so there should be some justification for computing the cohomology of equivariant sheaves there. 

\subsection{Equivariant divisors on minimal orbit closure of $\mathfrak{sl}_3$}
Let's start with the simplest case $G=SL(3)$ and $\overline{\mathcal{O}_{\min}}=\mathcal{N}_{3,1}$. In this case, for a $G$-equivariant maximal Cohen-Macaulay sheaf $\mathcal{F}$, by the property of reflexive sheaves we see $\mathcal{F}=i_*\mathcal{F}|_{\mathcal{O}_{\min}}$, where $i:\mathcal{O}_{\min}=\mathcal{N}_{3,1}\setminus 0\rightarrow\mathcal{N}_{3,1}$ is the open embedding. Now we know $\mathcal{O}_{\min}$ is a homogeneous space, so by Proposition \ref{GEQUITHENVECT}, $\mathcal{F}|_{\mathcal{O}_{\min}}$ is a vector bundle and hence obviously maximal Cohen-Macaulay, so it suffices to check the maximal Cohen-Macaulay condition at the singular point $0$. Now by Proposition \ref{localseqMCM}, $\mathcal{F}$ is maximal Cohen-Macaulay at $0$ if and only if
$$H^1(\mathcal{F|_{\mathcal{O}_{\min}}})=H^2(\mathcal{F|_{\mathcal{O}_{\min}}})=0
$$
In order to compute the cohomology, we analyze the orbit structure carefully: all $3\times 3$ nilpotent matrices of rank 1 are conjugate to 
$$\begin{bmatrix}
    0 & 0 & 1\\
    0 & 0 & 0\\
    0 & 0 & 0
\end{bmatrix}
$$
and the stabilizer of this element in $SL(3)$ is 
$$H=\left\{\left.\begin{bmatrix}
    a & b & c\\
    0 & \frac{1}{a^2} & d\\
    0 & 0 & a
\end{bmatrix}\right | a,b,c,d\in\mathbb{C}, \ a\neq 0\right\}
$$
Again by \ref{GEQUITHENVECT}, the vector bundles on $\mathcal{O}_{\min}=G/H$ are in one-to-one correspondence with $H$-representations. We are mainly interested in those \textit{irreducible} representations of $H$. Since $H$ is solvable in this case, all irreducible representations $V$ of $H$ are 1-dimensional, so they correspond to divisors on $\mathcal{O}_{\min}$, hence Weil divisors on $\mathcal{N}_{3,1}$.

We know how to compute maximal Cohen-Macaulay divisors in the previous section. The question is, if there exists a representation theoretic way to compute this. The answer is positive. One needs to notice that $H$ is "one step from parabolic". We denote by $S$ the subgroup in $SL(3)$ of the form
$$S=\left\{\left.\begin{bmatrix}
    x & 0 & 0\\
    0 & 1 & 0\\
    0 & 0 & \frac{1}{x}
\end{bmatrix}\right | x\in\mathbb{C}^\times\right\}
$$
One sees that $S\cong\mathbb{G}_m$, $H\cap S=\{\operatorname{diag}(1,1,1),\operatorname{diag}(-1,1,-1)\}\cong\mu_2$ and $S$ normalizes $H$. It's also easy to see that $H$ and $S$ generate the Borel subgroup $P$ of $SL(3)$ consisting of all upper triangular matrices, so we have 
$$P=(S\ltimes H)/\mu_2
$$
There is a natural quotient map $q:G/H\rightarrow G/P$. The fiber at the identity class $[1]$ is $P/H$. The injection $S\hookrightarrow P$ induces an isomorphism $S/\mu_2\cong P/H$. Now $S/\mu_2\cong\mathbb{G}_m/\mu_2\cong \mathbb{G}_m$, so $q$ is actually an affine map, and we can compute the cohomology on $G/H$ by pushing forward to $G/P$:
Given any equivariant line bundle $\mathcal{E}_V$ that corresponds to some irreducible 1-dimensional representation $\rho: H\rightarrow GL(V)$ of $H$, we have
$$H^k(\mathcal{E}_V,G/H)=H^{k}(q_*\mathcal{E}_V,G/P)
$$
Now we need to figure out what the $P$-representation corresponding to the equivariant bundle $q_*\mathcal{E}_V$ is. By the base change theorem, we have
$$(q_*\mathcal{E}_V)_{[1]}=H^0(\mathcal{E}_V,P/H)=H^0(\mathcal{E}_V,S/\mu_2)
$$
where on the closed subscheme $P/H\cong S/\mu_2$, we also identify $\mathcal{E}_V$ as the equivariant bundle of the restriction $\rho|_{\mu_2}:\mu_2\rightarrow GL(V)$. Denote by $\operatorname{Mor}(S,V^\vee)$ the algebraic maps from $S$ to $V^\vee$ and $\operatorname{Mor}(S,V^\vee)_{\mu_2}$ the "balancing" subset consisting of functions $f$ satisfying $(h.f)(g)=\rho^\vee(h)^{-1}f(gh)$ for $h\in\mu_2$ and $g\in S$. There is a natural identification
$$
H^0(\mathcal{E}_V,S/\mu_2)=\operatorname{Mor(S,V)}^{\mu_2}
$$
Then if we write $S=\operatorname{Spec}\mathbb{C}[t,t^{-1}]$, $\operatorname{Mor}(S,V)$ can be identified with $\mathbb{C}[t,t^{-1}]\otimes V^\vee$, where $(t^{j}\otimes \phi)(x)=x^j\phi$ for $x\in S=\mathbb{G}_m$ and $\phi\in V^\vee$. By computing $\mu_2$ balancing elements, one sees that 
$$H^0(\mathcal{E}_V,S/\mu_2)=\begin{cases}
    \displaystyle \bigoplus_{i\in\mathbb{Z}} \mathbb{C}t^{2i}\otimes V^\vee, & \rho|_{\mu_2}\  \text{is trivial}\\
    \displaystyle \bigoplus_{i\in\mathbb{Z}} \mathbb{C}t^{2i+1}\otimes V^\vee, & \rho|_{\mu_2} \ \text{is nontrivial}
\end{cases}
$$
While we know this cohomology group is also $H^0(\mathcal{E}_V,P/H)$, which can be identified with $\operatorname{Mor}(P,V^\vee)^H$. Then $t^j\otimes \phi$ can be regarded as an balancing function from $P$ to $V^\vee$ mapping $yh\in P$ to $y^j\rho^\vee(h)^{-1}\phi$. The natural $P$ action becomes
$$(yh.(t^j\otimes \phi))(x)=(t^j\otimes \phi)(h^{-1}y^{-1}x)=(y^{-1}x)^j\rho^\vee((y^{-1}x)^{-1}h(y^{-1}x))\phi
$$
Since $V$ is irreducible, the unipotent of $H$ acts trivially, so $\rho$ factors as $D=H/U(H)\rightarrow GL(V)$, where $D$ is also the diagonal group $\{\operatorname{diag}(a,a^{-2},a)|a\in\mathbb{C}^\times\}$. Then by commutativity of diagonal elements, one sees that $\rho^\vee((y^{-1}x)^{-1}h(y^{-1}x))=\rho^\vee(h)$ for all $x,y\in S$ and $h\in H$. Then the $P$-action is simply $(yh.(t^j\otimes v))(x)=y^{-j}x^j\rho^\vee(h)v$, so $yh.(t^j\otimes v)=t^j\otimes y^{-j}\rho^\vee(h) v$. Then the splitting above is actually a splitting of $P$-representations, i.e., each summand $\mathbb{C}t^{j}\otimes V^\vee$ is $P$-stable.

Now we discuss the weight of each summand in $H^0(\mathcal{E}_{V},P/H)=H^0(\mathcal{E}_V,S/\mu_2)$. Suppose $V=V(\lambda)$ is given by $H$-character $\lambda: H\rightarrow\mathbb{C}^\times$ sending $\operatorname{diag}(a,a^{-2},a)$ to $a^\lambda$. Then the $P$ action on $\mathbb{C}t^{-j}\otimes V^\vee$ is given by
$$\begin{bmatrix}
    ay & b & c\\
    0 & \frac{1}{a^2} & d\\
    0 & 0 & \frac{a}{y}
\end{bmatrix}.(t^j\otimes v^*)=y^{j}a^{-\lambda} (t^j\otimes v^*)=(ay)^{-\frac{\lambda-j}{2}}\left(\frac{a}{y}\right)^{-\frac{\lambda+j}{2}}(t^j\otimes v^*)
$$
One should notice that $\frac{\lambda\pm j}{2}$ are both integers because $\rho_{\mu_2}$ is nontrivial if and only if $\lambda$ is odd. Therefore, $\mathbb{C}t^{-j}\otimes V^\vee$ is an irreducible $P$-representation with weight $(\frac{\lambda-j}{2},0,\frac{\lambda+j}{2})$ in $P$'s weight lattice $\mathbb{Z}^3/\mathbb{Z}(1,1,1)$. Therefore, as a $P$-representation, we have
$$(q_*\mathcal{E}_{V(\lambda)})_{[1]}=H^0(\mathcal{E}_{V(\lambda)},S/\mu_2)=\bigoplus_{j\equiv\lambda (\operatorname{mod} 2)}V(-\frac{\lambda-j}{2},0,-\frac{\lambda+j}{2})    
$$
Then by our convention of taking dual, we see that the $P$-representation corresponding to the $G$-equivariant bundle $q_*\mathcal{E}_V$  is exactly the infinite direct sum $\bigoplus_{j\equiv\lambda (mod 2)}V({\frac{\lambda-j}{2},0,\frac{\lambda+j}{2}})$, so
$$H^k(\mathcal{E}_{V(\lambda)},G/H)=H^k(q_*\mathcal{E}_{V(\lambda)},G/P)=\bigoplus_{j\equiv\lambda (\operatorname{mod} 2)}H^k(\mathcal{E}_{V(\frac{\lambda-j}{2},0,\frac{\lambda+j}{2})}, G/P) 
$$
In order to let $i_*\mathcal{E}_{V(\lambda)}$ maximal Cohen-Macaulay on $\mathcal{N}_{3,1}$, as we discussed above, this is the same as requiring $H^k(\mathcal{E}_{V(\frac{\lambda-j}{2},0,\frac{\lambda+j}{2})}, G/P)=0$ for $k=1,2$ and all $j\equiv\lambda(\operatorname{mod}2)$. Applying the BWB theorem \ref{BWBparabolic}, we see that this is equivalent to  either $(\frac{\lambda-j+4}{2},1,\frac{\lambda+j}{2})$ being singular or the Weyl group element $w$ permuting it to the dominant weight having length $0$ or $3$. In $SL(3)$ case, this is the same as saying that either $(\frac{\lambda-j+4}{2},1,\frac{\lambda+j}{2})$ has strict increasing or decreasing order, or two of these three integers are the same.

Now let $j=\lambda$, the weight is $(2,1,\lambda)$. In order to satisfy the conditions above, we see that $\lambda\leq 2$. Again let $j=\lambda+4$, the weight becomes $(0,1,\lambda+2)$, and we get $\lambda+2\geq 0$, so $\lambda\geq -2$. Conversely, for any $-2\leq \lambda\leq 2$, one can check one of the conditions above occurs for any $j\equiv \lambda(\operatorname{mod} 2)$. So we have shown:

\begin{proposition}\label{SL3GMCM}
For an irreducible representation $V(\lambda)$ of $H$, the corresponding line bundle $\mathcal{E}_{V(\lambda)}$ on $\mathcal{O}_{\min}$ pushes forward to a maximal Cohen-Macaulay sheaf on $\mathcal{N}_{3,1}$ if and only if $-2\leq \lambda\leq 2$.
\end{proposition}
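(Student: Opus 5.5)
The plan is to turn the maximal Cohen-Macaulay condition into a cohomology vanishing on $G/P=SL(3)/B$, and then read that vanishing off from Borel--Weil--Bott.

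\textbf{Step 1 (reduction to $\mathcal{O}_{\min}$).} The sheaf $i_*\mathcal{E}_{V(\lambda)}$ is reflexive, since it is the pushforward from the complement of a codimension-$4$ point on the normal variety $\mathcal{N}_{3,1}$. Away from $0$ it is a vector bundle by Proposition \ref{GEQUITHENVECT}. Proposition \ref{localseqMCM} therefore applies with $\dim\mathcal{N}_{3,1}=4$: the sheaf is maximal Cohen-Macaulay if and only if $H^1(G/H,\mathcal{E}_{V(\lambda)})=H^2(G/H,\mathcal{E}_{V(\lambda)})=0$.

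\textbf{Step 2 (passing to $G/P$).} The quotient map $q:G/H\to G/P$ has fibre $P/H\cong S/\mu_2\cong\mathbb{G}_m$, so $q$ is affine. Hence $H^k(G/H,\mathcal{E}_{V(\lambda)})=H^k(G/P,q_*\mathcal{E}_{V(\lambda)})$. By the computation preceding the statement, $q_*\mathcal{E}_{V(\lambda)}$ splits $P$-equivariantly as $\bigoplus_{j\equiv\lambda\ (2)}\mathcal{E}_{V(\frac{\lambda-j}{2},0,\frac{\lambda+j}{2})}$. Cohomology on $G/P$ commutes with this direct sum, because $G/P$ is noetherian and the sum is a filtered colimit of coherent sheaves. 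So the condition becomes: for every $j\equiv\lambda \pmod 2$,
$$H^1\big(G/P,\mathcal{E}_{V(\frac{\lambda-j}{2},0,\frac{\lambda+j}{2})}\big)=H^2\big(G/P,\mathcal{E}_{V(\frac{\lambda-j}{2},0,\frac{\lambda+j}{2})}\big)=0 .$$

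\textbf{Step 3 (Borel--Weil--Bott).} Take $\rho=(2,1,0)$ modulo $(1,1,1)$. Theorem \ref{BWBparabolic} says each summand has vanishing $H^1$ and $H^2$ exactly when $\mu_j:=(\frac{\lambda-j+4}{2},1,\frac{\lambda+j}{2})$ is singular, or is regular with $\ell(w)\in\{0,3\}$. The second case means the triple is strictly decreasing or strictly increasing, since $\dim G/P=3$.

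Write $a=\frac{\lambda-j+4}{2}$ and $c=\frac{\lambda+j}{2}$. As $j$ ranges over $\lambda+2\mathbb{Z}$, the pair $(a,c)$ ranges over all integer pairs with $a+c=\lambda+2$. The triple $(a,1,c)$ is \emph{bad} precisely when $a\neq c$ and $a,c$ lie strictly on the same side of $1$.
\begin{enumerate}
\item \emph{Necessity.} Take $j=\lambda$, giving $\mu_j=(2,1,\lambda)$; this forces $\lambda\le 2$. Take $j=\lambda+4$, giving $\mu_j=(0,1,\lambda+2)$; this forces $\lambda\ge -2$.
\item \emph{Sufficiency.} Suppose $-2\le\lambda\le 2$. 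If $a,c\ge 2$ with $a\ne c$, then $a+c\ge 5$, so $\lambda\ge 3$. If $a,c\le 0$ with $a\neq c$, then $a+c\le -1$, so $\lambda\le -3$. Both contradict the assumption, so no bad $j$ exists.
\end{enumerate}

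\textbf{Main obstacle.} The delicate step is Step 2. There one must justify that the infinite splitting of $q_*\mathcal{E}_V$ really is a splitting into $P$-equivariant line bundles with exactly the stated weights, with the sign and duality conventions for $\mathcal{E}_V=G\times_H V^\vee$ matching those in Theorem \ref{BWBparabolic}. One must also justify that cohomology commutes with the infinite direct sum. Once the weights $\mu_j$ are pinned down, the rest is the elementary check above.
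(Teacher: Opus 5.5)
Your proposal is correct and follows the paper's own argument. The paper likewise reduces via Proposition~\ref{localseqMCM} to $H^1=H^2=0$ on $G/H$, pushes forward along the affine map $q$ to the flag variety, splits $q_*\mathcal{E}_{V(\lambda)}$ into line bundles, and applies Borel--Weil--Bott with the test values $j=\lambda$ and $j=\lambda+4$. Your ``$a\neq c$ strictly on the same side of $1$'' argument makes explicit the sufficiency direction that the paper only asserts with ``one can check.''
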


To compare with our previous result, we should know which Weil divisor $V(\lambda)$ gives. Identify $\mathcal{O}_{\min}$ with $T^*\mathbb{P}^2\setminus s(\mathbb{P}^2)$ where $s$ is the zero section. $SL(3)$ acts on $T^*\mathbb{P}^2\setminus s(\mathbb{P}^2)$ by
$$g. (i,j)=(gi,jg^{-1})
$$
where we identify $T^*\mathbb{P}^2\setminus s(\mathbb{P}^2)$ with pairs $(i,j)$ with $i:\mathbb{C}\rightarrow\mathbb{C}^3$, $j:\mathbb{C}^3\rightarrow\mathbb{C}$, $j\cdot i=0$. Then the natural projection $T^*\mathbb{P}^2\setminus s(\mathbb{P}^2)\rightarrow\mathbb{P}^2$ is $G$-equivariant, it corresponds to the natural quotient $G/H\rightarrow G/Q$ where $Q$ is the parabolic subgroup containing matrices of the following form:
$$\begin{bmatrix}
a & b & c\\
0 & d & e\\
0 & f & g
\end{bmatrix}
$$
On $\mathbb{P}^2$, the character $a$ of $Q$ corresponds to the tautological bundle $G\times_PV_a^\vee=\mathcal{O}_{\mathbb{P}^2}(1)$. Pulling this back to $T^*\mathbb{P}^2\setminus s(\mathbb{P}^2)$, one gets $\mathcal{O}_{T^*\mathbb{P}^2}(1)|_{T^*\mathbb{P}^2\setminus s(\mathbb{P}^2)}$. On the representation side, this gives character $a$ for $H$, so we actually have $i_*\mathcal{E}_{V(1)}=\mathcal{O}(D)$. Then, by the law of taking tensor product and dual, one can easily figure out that $i_*\mathcal{E}_{V(\lambda)}=\mathcal{O}(\lambda D)$. Hence Proposition \ref{SL3GMCM} is the same as saying $\mathcal{O}(mD)$ is maximal Cohen-Macaulay if and only if $-2\leq m\leq 2$. This coincides with our discovery above.

The identification of $i_*\mathcal{E}_{V(\lambda)}$ and $\mathcal{O}(-\lambda D)=\pi_*\mathcal{O}_{T^*\mathbb{P}^2}(-\lambda)$ also helps us compute global sections. From the computation above, we know that
$$H^0(i_*\mathcal{E}_{V(\lambda)},\mathcal{N}_{3,1})=H^0(\mathcal{E}_{V(\lambda)},G/H)=\bigoplus_{j\equiv\lambda (\operatorname{mod} 2)}H^0(\mathcal{E}_{V(\frac{\lambda-j}{2},0,\frac{\lambda+j}{2})}, G/P) 
$$
For a dominant weight $(\lambda_1,\lambda_2,\lambda_3)$, i.e., $\lambda_1>\lambda_2>\lambda_3$, there is a unique corresponding irreducible representation $V_{SL(3)}(\lambda_1,\lambda_2,\lambda_3)$ of $SL(3)$, and by BWB theorem, we have 
$$H^0(\mathcal{E}_{V(\lambda_1,\lambda_2,\lambda_3)}, G/P)=\begin{cases}
V_{SL(3)}(\lambda_1,\lambda_2,\lambda_3)^\vee, & (\lambda_1+2,\lambda_2+1,\lambda_3)\  \text{is dominant}\\
0, & \text{otherwise}
\end{cases}
$$
so we can compute that
$$\bigoplus_{j\equiv\lambda (\operatorname{mod} 2)}H^0(\mathcal{E}_{V(\frac{\lambda-j}{2},0,\frac{\lambda+j}{2})}, G/P) =
\bigoplus_{i\geq\max\{0,\lambda\}}V_{SL(3)}(i,0,\lambda-i)^\vee
$$
On the other hand, 
$$H^0(i_*\mathcal{E}_{V(\lambda)},\mathcal{N}_{3,1})=H^0(\pi_*\mathcal{O}_{T^*\mathbb{P}^2}(-\lambda))=H^0(S^\bullet(T_{\mathbb{P}^2})(-\lambda))
$$
so we conclude that 
$$H^0(S^\bullet(T_{\mathbb{P}^2})(-\lambda))=\bigoplus_{i\geq\max\{0,\lambda\}}V_{SL(3)}(i,0,\lambda-i)^\vee
$$

\subsection{Equivariant sheaves on $\mathfrak{sl}_{n+1}$ minimal orbit closures} The story in the previous section is generalizable to all $A_n$ type minimal orbits.
The action of $SL(n+1)$ on $\mathcal{O}_{\min}=\mathcal{N}_{n+1,1}\setminus 0$ has stabilizer
$$H=\left\{\left.\begin{bmatrix}
a & \alpha^T & c\\
0 & A & \beta\\
0 & 0 & a
\end{bmatrix}\right |\ a\in\mathbb{C}^\times,\ c\in\mathbb{C},\ \alpha, \beta \in \mathbb{C}^{n-1},\ A\in GL(n-1),\ a^2\det A=1\right\}
$$
The irreducible $H$-representations are exactly the irreducible representations of the $G/U(G)$ where $U(G)$ is the unipotent radical consisting of matrices of the form
$$
\begin{bmatrix}
1 & \alpha^T & c\\

0 & \operatorname{Id} & \beta\\
0 & 0 & 1
\end{bmatrix}
$$
Hence the irreducible representations are essentially representations of the group of quasi-diagonal matrices $D=G/U(G)=\{\operatorname{diag}(a, A, a)|\ a\in \mathbb{C}^\times,\ A\in GL(n-1),\ \ a^2\det A=1\}$. 

As before, we consider $S=\{\operatorname{diag}(x, \operatorname{Id},x^{-1})|x\in\mathbb{C}^\times\}$. One sees that $S\cap H=\{\operatorname{diag}(\pm 1, \operatorname{Id}, \pm 1)\}\cong\mu_2$, $S$ normalize $H$ and the product $P=S\cdot H\subset SL(n+1)$ is the parabolic group consisting of matrices of the following form
$$\begin{bmatrix}
    \ast & \ast & \cdots & \ast & \ast\\
    0 & \ast & \cdots & \ast & \ast\\
    \vdots & \vdots  & \vdots & \vdots &\vdots\\
    0 & \ast & \cdots & \ast & \ast\\
    0 & 0 & \cdots & 0 & \ast
\end{bmatrix}
$$
One also sees that elements in $S$ commute with elements in $D$. Again we have an affine morphism $q:G/H\rightarrow G/P$ with fiber  $P/H$ at the identity class $[1]$. The injection $S\hookrightarrow P$ induces an isomorphism $S/\mu_2\rightarrow P/H$. For any irreducible $H$-representation $V$, we have 
$$(q_*\mathcal{E}_V)_{[1]}=H^0(\mathcal{E}_V,P/H)=H^0(\mathcal{E}_V,S/\mu_2)=\begin{cases}
    \displaystyle \bigoplus_{i\in\mathbb{Z}} \mathbb{C}t^{2i}\otimes V^\vee, & \rho|_{\mu_2}\  \text{is trivial}\\
    \displaystyle \bigoplus_{i\in\mathbb{Z}} \mathbb{C}t^{2i+1}\otimes V^\vee, & \rho|_{\mu_2} \ \text{is nontrivial}
\end{cases}
$$
Since $V$ is irreducible, the unipotent of $H$ acts trivially. So if we suppose $\rho:H\rightarrow GL(V)$ is the representation, by the commutativity of $D$ and $S$, we have $\rho(h)=\rho(shs^{-1})$ for any $s\in S,\ h\in H$. Then one sees that $\mathbb{C}t^j\otimes V^\vee$ is $P$-stable and the $P$-action is given by
$yh.(t^j\otimes \phi)=t^j\otimes y^{-j}\rho^\vee(h) \phi$. One also check easily that these are all irreducible $P$ representations.

Now let's discuss the irreducible representations of $D$. It's easy to see that
$$D\cong\begin{cases}
SL(n-1)\times\mathbb{G}_m/\mu_{\frac{n-1}{2}} & n\  \text{is odd}\\
SL(n-1)\times\mathbb{G}_m/\mu_{n-1} & n\  \text{is even}
\end{cases} 
$$
where we identify $SL(n-1)$ with $\operatorname{diag}(1,SL(n-1),1)$ and $\mathbb{G}_m$ with $\{\operatorname{diag}(a^{\frac{n-1}{2}},a^{-1}\operatorname{Id},a^{\frac{n-1}{2}})|a\in\mathbb{C}^\times\}$ for odd $n$, $\{\operatorname{diag}(a^{n-1},a^{-2}\operatorname{Id},a^{n-1})|a\in\mathbb{C}^\times\}$ for even $n$. So the irreducible representations of $D$ are essentially irreducible representations of $SL(n-1)$ with an action of $\mathbb{G}_m$ that is compatible with the $SL(n-1)$ action.
Then one sees that the irreducible representations are determined by the dominant weights in the weight lattice of $D$: $\mathbb{Z}^n/\mathbb{Z}(2,1,1,...,1)$, where we say a weight $(\lambda,\lambda_1,\lambda_2,...,\lambda_{n-1})$ is dominant if $\lambda_1\geq\lambda_2\geq...\geq\lambda_{n-1}$.

Suppose $V$ is an irreducible representation with highest weight $(\lambda,\lambda_1,\lambda_2,...,\lambda_{n-1})$ and corresponding eigenvector $v$. Denote by $v^*$ the lowest vector in $V^\vee$. Then the action of the diagonal torus on $\mathbb{C}t^{-j}\otimes v^*$ is by
\begin{align}\notag
\operatorname{diag}(ax,a_1,a_2,...,a_{n-1},\frac{a}{x}).(t^{-j}\otimes v^*)&=a^{-\lambda} a_1^{-\lambda_1} a_2^{-\lambda_2}...a_{n-1}^{-\lambda_{n-1}}x^{j}(t^j\otimes v)\\\notag
&=(ax)^{-\frac{\lambda-j}{2}}a_1^{-\lambda_1} a_2^{-\lambda_2}...a_{n-1}^{-\lambda_{n-1}}(\frac{a}{x})^{-\frac{\lambda+j}{2}}(t^j\otimes v)
\end{align}
So $\mathbb{C}t^{-j}\otimes V^\vee$ is the dual of the irreducible $P$-representation with highest weight $(\frac{\lambda-j}{2},\lambda_1,\lambda_2,...,\lambda_{n-1},\frac{\lambda+j}{2})$. Therefore, $q_*\mathcal{E}_V=\bigoplus_{j\equiv\lambda (\operatorname{mod} 2)}\mathcal{E}(\frac{\lambda-j}{2},\lambda_1,\lambda_2,...,\lambda_{n-1},\frac{\lambda+j}{2})$ on $G/P$. Since $G/H\rightarrow G/P$ is affine: 
$$H^k(\mathcal{E}_V,G/H)=H^k(q_*\mathcal{E}_V,G/P)=\bigoplus_{j\equiv\lambda (\operatorname{mod} 2)}H^k(\mathcal{E}(\frac{\lambda-j}{2},\lambda_1,\lambda_2,...,\lambda_{n-1},\frac{\lambda+j}{2}),G/P)
$$

Now let $i:G/H\rightarrow \mathcal{N}_{n+1,1}$ be the open embedding. By Proposition \ref{localseqMCM}, $i_*\mathcal{E}_V$ is maximal Cohen Macaulay if and only if $H^k(\mathcal{E}_V,G/H)=0$ for $1\leq k\leq 2n-2$. Applying the BWB theorem \ref{BWBparabolic} for $(SL(n+1),P)$, since we already have $\lambda_1\geq\lambda_2\geq...\geq\lambda_{n-1}$, we see that $(\frac{\lambda-j+2n}{2},\lambda_1+n-1,\lambda_2+n-2,...,\lambda_{n-1}+1,\frac{\lambda+j}{2})$ satisfies one of the following conditions:
\begin{enumerate}
    \item There are two coordinates having the same value in this weight
    \item $\frac{\lambda-j+2n}{2}>\lambda_1+n-1>\lambda_2+n-2>...>\lambda_{n-1}+1>\frac{\lambda+j}{2}$
    \item $\frac{\lambda+j}{2}>\lambda_1+n-1>\lambda_2+n-2>...>\lambda_{n-1}+1>\frac{\lambda-j+2n}{2}$
\end{enumerate}
Let $k=\frac{\lambda-j+2n}{2}$, then $\frac{\lambda+j}{2}=\lambda+n-k$. When $j$ goes over all integers $\equiv\lambda(\operatorname{mod}2)$, $k$ goes over all integers. One should notice that $k=\lambda+n-k$ if and only if $\lambda\equiv n (\operatorname{mod}2)$ and $k=\frac{\lambda+n}{2}$. Now define the following set consisting of "gaps":
$$
A=\{N|N\in\mathbb{Z}, \lambda_{n-1}+1<N<\lambda_{1}+n-1, N\neq\lambda_{i}+n-i\ \text{for any}\ 1\leq i\leq n-1, N\neq\frac{\lambda+n}{2}\} 
$$
We also suppose $M=\max\{i|1\leq i\leq n-1, \lambda_i=\lambda_1\}$ and $m=\min\{i|1\leq i\leq n-1, \lambda_i=\lambda_{n-1}\}$. In order to meet the three conditions above for all $j$, we should have, 
\begin{enumerate} 
\item When $k>\lambda_1+n-1$, we should have $\lambda+n-k\leq\lambda_m+n-m$ or $k=\frac{\lambda+n}{2}$. 
\item When $k=\alpha$ for some $\alpha\in A$, we should have $\lambda+n-k=\lambda_{i}+n-i$ for some $i$. 
\item When $k<\lambda_{n-1}+1$, there should be $\lambda+n-k\geq\lambda_M+n-M$ or $k=\frac{\lambda+n}{2}$. 
\end{enumerate} 
From (2) we know that for each $t$, $\lambda+n-\alpha\in\{\lambda_i+n-i|1\leq i \leq n-1\}$, so $$\lambda\in\displaystyle\bigcap_{\alpha\in A}\{\lambda_i+\alpha-i|1\leq i \leq n-1\}$$ 
From (1) and (3), we derive that when $\lambda_1>\lambda_{n-1}$, we should have $\lambda\leq\lambda_1+\lambda_m+n-m$ and $\lambda\geq\lambda_{n-1}+\lambda_M-M$, since $A\neq\varnothing$, and because of that, the condition $k=\lambda+n-k$ cannot hold in cases $k>\lambda+n-1$ or $k<\lambda_{n-1}+1$. When $\lambda_1=\lambda_2=...=\lambda_{n-1}=\mu$, we could have $\lambda=\lambda_1+\lambda_m+n-m+1=2\mu+n$ or $\lambda=\lambda_{n-1}+\lambda_M-M-1=2\mu-n$. This is because, for example, if $\lambda=2\mu+n$, when $k=\lambda_1+n=\mu+n$, we have $k=\frac{2\mu+2n}{2}=\frac{\lambda+n}{2}$; while when $k>\lambda_1+n$, we still have $\lambda+n-k=\lambda_{i}+n-i$. And one can also check that (3) holds as well. In this case, one can also observe that $A=\varnothing$, so (2) is actually free.

These conditions above are all we need for maximal Cohen-Macaulay. Hence, we have proved: 
\begin{theorem}\label{SLMCM} Suppose $\mathcal{O}_{\min}$ is the minimal orbit in $\mathfrak{sl}_{n+1}$ $(n\geq 2)$ with respect to the conjugation action of $SL(n+1)$ and $H$ is the stabilizer. Given an equivariant bundle $\mathcal{E}_V=SL(n+1)\times_H V^\vee$ corresponding to an irreducible representation $V$ of $H$ determined by a weight $(\lambda,\lambda_1,\lambda_2...\lambda_{n-1})$ in $\mathbb{Z}^n/\mathbb{Z}(0,1,...,1)$ with $\lambda_1\geq\lambda_2\geq...\geq\lambda_{n-1}$, the pushforward $i_*\mathcal{E}_V$ is maximal Cohen-Macaulay on orbit closure $\overline{\mathcal{O}_{\min}}=\mathcal{N}_{n+1,1}$ if and only if 
\begin{enumerate} 
\item If $\lambda_1=\lambda_2=\cdots=\lambda_{n-1}=\mu$, then $2\mu-n\leq\lambda\leq2\mu+n$.
\item If $\lambda_1>\lambda_{n-1}$, there is $\lambda_{n-1}+\lambda_M-M\leq\lambda\leq\lambda_1+\lambda_m+n-m$ and $$\lambda\in\displaystyle\bigcap_{\alpha\in A}\{\lambda_i+\alpha-i|1\leq i \leq n-1\}
$$
\end{enumerate}
where $M=\max\{i|1\leq i\leq n-1, \lambda_i=\lambda_1\}$, $m=\min\{i|1\leq i\leq n-1, \lambda_i=\lambda_{n-1}\}$ and $$A=\{N|N\in\mathbb{Z}, \lambda_{n-1}+1<N<\lambda_{1}+n-1, N\neq\lambda_{i}+n-i\ \text{for any}\ 1\leq i\leq n-1, N\neq\frac{\lambda+n}{2}\} $$ 
\end{theorem}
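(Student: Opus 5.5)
The plan is to reduce the maximal Cohen--Macaulay property to a vanishing statement on $G/P$, and then read that vanishing off combinatorially from the Borel--Weil--Bott theorem.

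\textbf{Step 1: reduction to $G/H$.} Since $\mathcal{E}_V$ is a vector bundle on the smooth orbit, $i_*\mathcal{E}_V$ is reflexive. Proposition \ref{localseqMCM} then applies with $\dim\mathcal{N}_{n+1,1}=2n$. It says that $i_*\mathcal{E}_V$ is maximal Cohen--Macaulay if and only if $H^k(\mathcal{E}_V,G/H)=0$ for $1\leq k\leq 2n-2$.

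\textbf{Step 2: transfer to $G/P$.} Use the torus $S=\{\operatorname{diag}(x,\operatorname{Id},x^{-1})\}$, with $SH=P$ and $S\cap H=\mu_2$. The quotient $q:G/H\to G/P$ is affine with fibre $S/\mu_2\cong\mathbb{G}_m$. Since $S$ commutes with the Levi factor $D$ and the unipotent radical acts trivially on $V$, the pushforward splits as
$$q_*\mathcal{E}_V=\bigoplus_{j\equiv\lambda\ (\mathrm{mod}\ 2)}\mathcal{E}\Bigl(\tfrac{\lambda-j}{2},\lambda_1,\dots,\lambda_{n-1},\tfrac{\lambda+j}{2}\Bigr).$$
Affineness of $q$ gives that $H^k(G/H,\mathcal{E}_V)$ is the direct sum of the $H^k(G/P,\cdot)$ of these summands. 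Hence the condition of Step 1 becomes: for every such $j$, the corresponding line-plus-Levi bundle on $G/P$ has no cohomology in degrees $1,\dots,2n-2$. Note that $\dim G/P=2n-1$.

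\textbf{Step 3: Borel--Weil--Bott.} By Theorem \ref{BWBparabolic}, adding $\rho$ gives the sequence
$$\Bigl(k,\ \lambda_1+n-1,\ \dots,\ \lambda_{n-1}+1,\ \lambda+n-k\Bigr),\qquad k=\tfrac{\lambda-j+2n}{2}.$$
As $j$ ranges over integers $\equiv\lambda\pmod 2$, $k$ ranges over all of $\mathbb{Z}$. The middle entries $b_i=\lambda_i+n-i$ are strictly decreasing. So the vanishing in degrees $1,\dots,2n-2$ means: either the sequence has a repeated entry, or its sorting permutation has length $0$ or $2n-1$. The latter means the sequence is strictly decreasing, or it becomes strictly decreasing after swapping the two end entries.

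\textbf{Step 4: case analysis on $k$.} I would split according to where $k$ lies.

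If $k=b_i$ for some $i$, or $k=\lambda+n-k$, the weight is singular and there is no condition.

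If $k>b_1$, we need $\lambda+n-k<b_{n-1}$ or a repetition; the relevant repetition involves $b_m$.

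If $k<b_{n-1}$, we symmetrically need $\lambda+n-k>b_1$ or a repetition, giving the lower bound via $M$.

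If $k$ lies strictly between $b_{n-1}$ and $b_1$ and is not some $b_i$ (the gap set $A$), the sorting length is strictly between $0$ and $2n-1$. So the only escape is $\lambda+n-k\in\{b_i\}$. This yields exactly $\lambda\in\bigcap_{\alpha\in A}\{\lambda_i+\alpha-i\}$.

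Evaluating the two outer ranges at the extremal $k$, namely $k=b_1+1$ and $k=b_{n-1}-1$, and propagating monotonically in $k$, gives the inequalities $\lambda_{n-1}+\lambda_M-M\leq\lambda\leq\lambda_1+\lambda_m+n-m$. In the degenerate case $\lambda_1=\dots=\lambda_{n-1}=\mu$, the set $A$ is empty. The extra boundary values $\lambda=2\mu\pm n$ are then allowed, because the single bad $k$ is exactly $k=\frac{\lambda+n}{2}$, where the weight is singular. This produces $2\mu-n\leq\lambda\leq 2\mu+n$.

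\textbf{Main obstacle.} I expect the hardest step to be the bookkeeping of the boundary cases in the outer ranges. Several things interact there: the self-coincidence $k=\lambda+n-k$, the tie multiplicities encoded by $M$ and $m$, and the parity constraint defining $A$. Together they decide whether the stated inequalities are sharp. I would first verify the criterion against Proposition \ref{SL3GMCM} at $n=2$, and against Corollary \ref{A_nMCMdivisors}, before writing the general inequalities.
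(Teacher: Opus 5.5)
Your proposal is correct and follows the paper's proof essentially step for step. Reflexivity and Proposition~\ref{localseqMCM} reduce the question to $H^k(G/H,\mathcal{E}_V)=0$ for $1\le k\le 2n-2$. The affine quotient $q\colon G/H\to G/P$ with fibre $S/\mu_2$ splits $q_*\mathcal{E}_V$ into the $P$-irreducibles of weight $(\frac{\lambda-j}{2},\lambda_1,\dots,\lambda_{n-1},\frac{\lambda+j}{2})$. Borel--Weil--Bott with $k=\frac{\lambda-j+2n}{2}$ then leads to the same three-range case analysis: the gaps give the intersection over $A$, the outer ranges give the bounds via $m$ and $M$, and the coincidence $k=\frac{\lambda+n}{2}$ admits $\lambda=2\mu\pm n$.

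When you write out the outer ranges, "evaluate at the extremal $k$ and propagate" should be made precise. The condition must hold for every $k$ in the range, so it says that all integers in $(b_{n-1},\lambda-\lambda_1]$, resp.\ $[\lambda+n-\lambda_{n-1},b_1)$, lie in $\{b_i\}$. For $\lambda_1>\lambda_{n-1}$ this is exactly $\lambda-\lambda_1\le b_m$, resp.\ $\lambda+n-\lambda_{n-1}\ge b_M$.
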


Let's again apply this to the line bundles. The $1$-dimensional representations of $H$ are exactly classified by $(\lambda,\lambda_1,\lambda_2...\lambda_{n-1})$ where $\lambda_1=\lambda_2=...=\lambda_{n-1}=\mu$ for some $\mu\in\mathbb{Z}$. So condition (1) tells you that $-n\leq\lambda-2\mu\leq n$. As argued in the $SL(3)$ case, one see that this coincides with our observation in \ref{A_nMCMdivisors}.

\section{Equivariant Maximal Cohen-Macaulay sheaves on $C_n$ type minimal orbit closures}
In this chapter, we generalize the story of maximal Cohen-Macaulay sheaves on minimal orbit from the previous chapter to the symplectic groups $Sp(2n)$ ($n\geq 2$).

Take a standard basis $e_1,e_2,...,e_n,f_1,f_2,...,f_n$ in a symplectic space, the standard symplectic form has the matrix $J=\begin{bmatrix}
    0 & \operatorname{Id}_n\\
    -\operatorname{Id}_{n} & 0
\end{bmatrix}$, and one typical element in the minimal orbit of $\mathfrak{sp}_{2n}$ is $E_{1,n+1}$ sending $f_1$ to $e_1$ and other basis elements to $0$. However, the stabilizer of this element is not block diagonal. For a smooth generalization of our previous results, we swap $f_1$ and $f_n$. Then the symplectic quadratic form becomes $J'=
\begin{bmatrix}0 & I'\\-I'& 0\end{bmatrix}$ where $I'$ is the permutation matrix $\begin{bmatrix}
    0 & 0 & 1\\
    0 & \operatorname{Id}_{n-1} & 0\\
    1 & 0 & 0
\end{bmatrix}$ and the typical element in the minimal orbit becomes $E_{1,2n}$. For $g=(a_{i,j})\in Sp(2n)$, the condition $g E_{1,2n}=E_{1,2n}g$ is $a_{1,1}=a_{2n,2n}$, $a_{i,1}=a_{2n,j}=0$ for $1\leq i,j\leq 2n$. So all $g\in \operatorname{Stab_{Sp(2n)}}(E_{1,2n})$ can be written as
$$g=\begin{bmatrix}
    a & \alpha  & b\\
    0 & A & \beta\\
    0 & 0 & a
\end{bmatrix},\ a\in\mathbb{C}^\times,\ b\in\mathbb{C},\ \alpha,\beta^T\in\mathbb{C}^{1\times (2n-2)}, A\in GL(2n-2)
$$
Now we need to figure out the necessary relations for $g$ to be in $Sp(2n)$. Write $J'=\begin{bmatrix}
    0 & 0  & 1\\
    0 & J'' & 0\\
    -1 & 0 & 0
\end{bmatrix}$ where 
$$J''=\begin{bmatrix}
    0 & 0 & 0 & \operatorname{Id}_{n-2}\\
    0 & 0 & 1 & 0\\
    0 & -1 & 0 & 0\\
    -\operatorname{Id}_{n-2} & 0 & 0 & 0
\end{bmatrix}
$$
Then $g^TJ'g=J'$ translates into 
$$\begin{bmatrix}
    0 & 0  & a^2\\
    0 & A^TJ''A & A^TJ''\beta+a\alpha^T\\
    -a^2 & -a\alpha+\beta J'' A & \beta^T J''\beta
\end{bmatrix}=\begin{bmatrix}
    0 & 0  & 1\\
    0 & J'' & 0\\
    -1 & 0 & 0
\end{bmatrix}
$$
Since $J''$ is skew-symmetric, we have $A^TJ''\beta+a\alpha^T=-(-a\alpha+\beta J'' A)^T$ and $\beta^T J''\beta=0$. Hence the matrix equation is equivalent to
$$A^TJ''A=J'',\ a^2=1, \ \alpha=\frac{1}{a}\beta^TJ'' A
$$
As in the previous chapter, denote $H= \operatorname{Stab_{Sp(2n)}}(E_{1,2n})$, we see from the relations above that the unipotent radical of $H$ is
$$U(H)=\left\{\left.\begin{bmatrix}
    1 & \beta^TJ''  & b\\
    0 & \operatorname{Id}_{2n-2} & \beta\\
    0 & 0 & 1
\end{bmatrix}\right|\ b\in\mathbb{C},\ \beta\in\mathbb{C}^{(2n-2)\times 1}\right\}
$$
Since $J''$ also serves as a symplectic form on some $2n-2$-dimensional space, we see that $H/U(H)\cong Sp(2n-2)\times\mu_2$, where the components are identified with $Sp(2n-2)=Sp(J'')=\{\operatorname{diag}(1,A,1)|A^TJ''A=J''\}$ and $\mu_2=\{\operatorname{diag}(\pm 1,\operatorname{Id}_{2n-2},\pm 1)\}$.

Consider $S=\{\operatorname{diag}(a,\operatorname{Id}_{2n-2},a^{-1})|a\in\mathbb{C}^\times\}$. As in the previous chapter, one sees that $S\cong\mathbb{G}_m$, $S\cap H\cong \mu_2$, $S$ normalizes $H$ and that $P=S\cdot H$ is parabolic. Indeed, with the induced action of $Sp_{2n}$ on the projectivization of the minimal nilpotent orbit $\mathcal{O}_{\min}(\mathfrak{sp}_{2n})$, $P$ is the stabilizer of the point representing the line $\mathbb{C}e_1$. For an irreducible representation $\rho:H\rightarrow GL(V)$ of $H$, we consider the corresponding vector bundle $\mathcal{E}_V$ on $G/H$. Push it forward along the quotient map $q:G/H\rightarrow G/P$, as before we get 
$$(q_*\mathcal{E}_V)_{[1]}=\begin{cases}
    \displaystyle \bigoplus_{i\in\mathbb{Z}} \mathbb{C}t^{2i}\otimes V^\vee, & \rho|_{\mu_2}\  \text{is trivial}\\
    \displaystyle \bigoplus_{i\in\mathbb{Z}} \mathbb{C}t^{2i+1}\otimes V^\vee, & \rho|_{\mu_2} \ \text{is nontrivial}
\end{cases}
$$
as $P$ representations. Now, an irreducible representation $V$ of $H$ is the same as an irreducible representation of $Sp_{J''}(2n-2)$ and with $\mu_2$ acting by $\pm 1$. Consider the following maximal torus in $Sp_{J''}(2n-2)$
$$T=\{\operatorname{diag}(t_1,t_2,...,t_{n-2},t_{n-1},t_{n-1}^{-1},t_{1}^{-1},t_{2}^{-1},...,t_{n-2}^{-1})|t_{i}\in\mathbb{C}^{\times}, 1\leq i \leq n-1\}
$$
With this maximal torus, the dominant integral weights are the characters $(\lambda_1,\lambda_2,...,\lambda_{n-1})$ of $T$ such that $\lambda_1\geq\lambda_2\geq...\geq \lambda_{n-1}\geq 0$. Then the dominant weights of $H$ can be encoded by $(\lambda_0,\lambda_1,...,\lambda_{n-1})$ in $\mathbb{Z}/2\times\mathbb{Z}^{n-1}$ with $\lambda_1\geq\lambda_2\geq...\geq\lambda_{n-1}\geq 0$. The action of $P$ on $\mathbb{C}t^{-j}\otimes V^\vee$ has lowest weight $\lambda=(j,-\lambda_1,-\lambda_2,...,-\lambda_{n-1})$. So it is the dual representation of the representation with highest weight $(-j,\lambda_1,\lambda_2,...,\lambda_{n-1})$. Hence
$$q_*\mathcal{E}_V=\bigoplus_{j\equiv\lambda_0(\operatorname{mod}2)}\mathcal{E}(-j,\lambda_1,\lambda_2,...,\lambda_{n-1})
$$
Since $q$ is affine, we have
$$H^m(\mathcal{E}_V,G/H)=H^m(q_*\mathcal{E}_V,G/P)=\bigoplus_{j\equiv\lambda_0 (\operatorname{mod} 2)}H^m(\mathcal{E}(-j,\lambda_1,\lambda_2,...,\lambda_{n-1}),G/P)
$$
The half sum of positive roots for $Sp(2n)$ is $(n,n-1,...,1)$. Then $\lambda+\rho=(n-j,\lambda_1+n-1,\lambda_2+n-2,...,\lambda_{n-1}+1)$. As we have supposed $\lambda_1\geq\lambda_2\geq...\geq\lambda_{n-1}\geq 0$, we have $\lambda_1+n-1>\lambda_2+n-2>...>\lambda_{n-1}+1>0$. So we have $\lambda+j$ is singular if and only if $n-j=0$ or $\pm(\lambda_i+n-i)$ for some $1\leq i\leq n-1$. When $|n-j|\notin\{0,\lambda_1+n-1,\lambda_2+n-2,...,\lambda_{n-1}+1\}$, suppose $w$ is the Weyl group element sending $\lambda+\rho$ to a dominant weight, then 
$$l(w)=\begin{cases}
    \#\{i|\lambda_i+n-i>n-j\} & n-j>0\\
    2n-1-\#\{i|\lambda_i+n-i>j-n\} & n-j<0
\end{cases}
$$
By Proposition \ref{localseqMCM}, the pushforward of $\mathcal{E}_V$ on minimal orbit closure is maximal Cohen-Macaulay if and only if $H^m(\mathcal{E}_V,G/H)=\bigoplus_{j\equiv\lambda_0 (\operatorname{mod} 2)}H^m(\mathcal{E}(-j,\lambda_1,\lambda_2,...,\lambda_{n-1}),G/P)=0$ for $1\leq m \leq 2n-2$. By BWB theorem \ref{BWBparabolic}, this is the same as requiring that $\lambda+\rho$ is either singular, or $l(w)\in\{0,2n-1\}$. Again, define the set of gaps
$$A=\{N|N\in\mathbb{Z}, \ |N|<\lambda_{1}+n-1,\  N\neq 0,\   \ N\neq \pm(\lambda_{i}+n-i)\ \  \text{for any}\ \   1\leq i\leq n-1\}
$$
We see that it's equivalent to say that elements in $A$ cannot be congruent to $n-\lambda_0$ modulo $2$. So we have proved
\begin{theorem}\label{Ctypetheorem}
Suppose $\mathcal{O}_{\min}$ is the minimal orbit in $\mathfrak{sp}_{2n}$ $(n\geq 2)$ with respect to the conjugation action of $Sp(2n)$ and $H$ is the stabilizer. An equivariant bundle $\mathcal{E}_V=Sp(2n)\times_H V^\vee$ corresponding to an irreducible representation $V$ of $H$ is determined by the weight $(\lambda_0,\lambda_1,\lambda_2...\lambda_{n-1})$ in $H$'s lattice $\mathbb{Z}/2\times\mathbb{Z}^{n-1}$. The pushforward $i_*\mathcal{E}_V$ is maximal Cohen-Macaulay on the orbit closure $\overline{\mathcal{O}_{\min}}$ if and only if 
for any element $k$ in the following set
$$A=\{N|N\in\mathbb{Z}, \ |N|<\lambda_{1}+n-1,\  N\neq 0,\   \ N\neq \pm(\lambda_{i}+n-i)\ \  \text{for any}\ \   1\leq i\leq n-1\}
$$
we have $k\not\equiv n-\lambda_0 (\operatorname{mod} 2)$.
\end{theorem}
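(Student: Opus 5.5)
The plan is to follow the template of Theorem \ref{SLMCM}: reduce the maximal Cohen-Macaulay condition to a vanishing of cohomology on the open orbit, transfer that cohomology to the partial flag variety $G/P$ through the affine map $q$, and then read off the answer from Theorem \ref{BWBparabolic}. Throughout, $G=Sp(2n)$ and $P=S\cdot H$ is the stabilizer of the line $\mathbb{C}e_1$.

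\textbf{Step 1: reduction to cohomology on the orbit.} The sheaf $i_*\mathcal{E}_V$ is reflexive and is a vector bundle away from $0$. Since $\dim\overline{\mathcal{O}_{\min}}=2n$, Proposition \ref{localseqMCM} says it is maximal Cohen-Macaulay if and only if $H^m(\mathcal{E}_V,G/H)=0$ for $1\leq m\leq 2n-2$.

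\textbf{Step 2: pushing forward to $G/P$.} The fiber of $q$ is $P/H\cong S/\mu_2\cong\mathbb{G}_m$, so $q$ is affine and $H^m(G/H,\mathcal{E}_V)=H^m(G/P,q_*\mathcal{E}_V)$. From the computation above, $q_*\mathcal{E}_V$ splits as the direct sum of the line-twisted bundles $\mathcal{E}(-j,\lambda_1,\dots,\lambda_{n-1})$ over all $j\equiv\lambda_0 \pmod 2$. Two facts are needed here:
\begin{itemize}
\item $S$ commutes with the Levi factor $Sp(2n-2)\times\mu_2$ of $H$, which makes each summand $P$-stable and irreducible;
\item cohomology commutes with this infinite direct sum, because $G/P$ is noetherian.
\end{itemize}
So the condition of Step 1 becomes the vanishing of $H^m(G/P,\mathcal{E}(-j,\lambda_1,\dots,\lambda_{n-1}))$ for every $j\equiv\lambda_0$ and every $1\leq m\leq 2n-2$.

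\textbf{Step 3: applying Borel--Weil--Bott.} With $\rho=(n,n-1,\dots,1)$ we get
\[
\lambda+\rho=(n-j,\ \lambda_1+n-1,\ \dots,\ \lambda_{n-1}+1).
\]
The last $n-1$ entries are strictly decreasing and positive. The Weyl group of type $C_n$ acts by signed permutations, so:
\begin{itemize}
\item $\lambda+\rho$ is singular exactly when $n-j$ equals $0$ or $\pm(\lambda_i+n-i)$ for some $i$;
\item otherwise, the length of the element $w$ making $\lambda+\rho$ dominant is obtained by moving the first entry into place, and sign-flipping it if it is negative.
\end{itemize}
For $N=n-j>0$, the length is $\#\{i:\lambda_i+n-i>N\}$. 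For $N<0$, it is $2n-1-\#\{i:\lambda_i+n-i>|N|\}$; this is the step I would double-check, by counting the inversions of a signed permutation for a root system of type $C$.

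\textbf{Step 4: reading off the answer.} The only cohomology lives in degree $l(w)$, and that degree lies outside $[1,2n-2]$ exactly when $l(w)\in\{0,2n-1\}$. This happens precisely when $|N|>\lambda_1+n-1$. Hence the bad values of $N$ are the nonzero, nonsingular integers with $|N|<\lambda_1+n-1$, which is exactly the set $A$. As $j$ ranges over $j\equiv\lambda_0$, the integer $N=n-j$ ranges over all integers congruent to $n-\lambda_0$ modulo $2$. The condition is therefore that no element of $A$ has that parity, which is the statement of Theorem \ref{Ctypetheorem}.

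The main obstacle is bookkeeping rather than anything conceptual. The two points to verify carefully are the dualization convention in identifying $\mathbb{C}t^{-j}\otimes V^\vee$ with $\mathcal{E}(-j,\lambda_1,\dots)$, including the sign of $j$ and the parity, and the length formula for negative first entries.
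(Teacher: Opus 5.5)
Your proposal is correct and follows the paper's own argument step for step: Proposition \ref{localseqMCM} reduces the question to the vanishing of $H^m$ for $1\leq m\leq 2n-2$; the pushforward along the affine map $q$ splits into the bundles $\mathcal{E}(-j,\lambda_1,\dots,\lambda_{n-1})$; and Borel--Weil--Bott is then applied with exactly the length formula the paper uses. Both points you flagged check out. For $N<0$ the inverted positive roots are all $n-1$ roots $e_1-e_{i+1}$, the root $2e_1$, and those $e_1+e_{i+1}$ with $\lambda_i+n-i<|N|$, which gives $2n-1-\#\{i:\lambda_i+n-i>|N|\}$. The sign convention for $j$ is immaterial because $j$ runs over a full residue class modulo $2$.
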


\section{Equivariant maximal Cohen-Macaulay sheaves on $B_n$, $D_n$ type minimal orbit closures}
In this chapter, we generalize our story to $\mathfrak{so}_{2n}$ and $\mathfrak{so}_{2n+1}$. Let's suppose $n\geq 3$ to avoid the cases we have discussed. 

The corresponding Lie groups are $\operatorname{Spin}(2n)$ and $\operatorname{Spin}(2n+1)$, which are the central extensions of $SO(2n)$ and $SO(2n+1)$ with $\mu_2$ respectively. However, since the center $\mu_2$ acts trivially on the lie algebra $\mathfrak{so}_{2n}$ and $\mathfrak{so}_{2n+1}$, the $G=\operatorname{Spin}(2n)$ or $\operatorname{Spin}(2n+1)$ equivariant sheaves are exactly $G=SO(2n)$ or $SO(2n+1)$ equivariant sheaves with an action of $\mu_2$ on fibers. If we are mainly interested in whether those sheaves are maximal Cohen-Macaulay, it is the same to consider those two equivariant structures. We will use the matrix groups to show the steps, for their advantages of being visible, and state results for  $\operatorname{Spin}$ in the next chapter where we develop method for abstract simple Lie algebras.

\subsection{On the minimal orbit closure of $\mathfrak{so}_{2n}$ }Let's first work on $SO(2n)$. Take a standard basis $e_1,e_2,...,e_n,f_1,f_2,...,f_n$, the standard symmetric form has the matrix $J=\begin{bmatrix}
    0 & \operatorname{Id}_n\\
    \operatorname{Id}_{n} & 0
\end{bmatrix}$. One typical element in the minimal orbit of $\mathfrak{so}_{2n}$ is $E_{1,n+2}-E_{2,n+1}$. However, the stabilizer of this element is not block diagonal. For a smooth generalization of our previous results, we change the order of the basis to $e_1,e_2,...,e_n,f_3,...,f_n, f_1,f_2$. Then the symmetric quadratic form becomes $J'=
\begin{bmatrix}0 & I'\\I'^T& 0\end{bmatrix}$ where $I'$ is the permutation matrix $\begin{bmatrix}
    0 & \operatorname{Id}_2\\
    \operatorname{Id}_{n-2} & 0 
\end{bmatrix}$ and the typical element in the minimal orbit becomes $E_{1,2n}-E_{2,2n-1}$. For $g=(a_{i,j})\in SO(2n)$, the condition $g(E_{1,2n}-E_{2,2n-1})=(E_{1,2n}-E_{2,2n-1})g$ can be computed as 
\begin{enumerate}
    \item $a_{i,1}=a_{i,2}=0$ for $3\leq i\leq 2n$, $a_{2n,j}=a_{2n-1,j}=0$ for $1\leq j \leq 2n-2$
    \item $a_{1,1}=a_{2n,2n}$, $a_{2,2}=a_{2n-1,2n-1}$, $a_{12}=-a_{2n,2n-1}$ and $a_{21}=-a_{2n-1,2n}$
\end{enumerate}
So the matrices in the stabilizer are in the following form
$$\begin{bmatrix}
U & P & B\\
0 & A & Q\\
0 & 0 & (\det U)(U^{-1})^T
\end{bmatrix}
$$
where $U\in GL(2)$, $A\in GL(2n-4)$, $P,Q^T\in \mathbb{C}^{2\times (2n-4)}$, $B\in\mathbb{C}^{2\times 2}$. Write $J'=\begin{bmatrix}
    0 & 0 &\operatorname{Id}_2\\
    0 & J'' & 0\\
    \operatorname{Id}_2 & 0 & 0
\end{bmatrix}$ where $J''=\begin{bmatrix}
    0 & \operatorname{Id}_{n-2}\\
    \operatorname{Id}_{n-2} & 0
\end{bmatrix} $. Then $g^{T}J'g=J'$ translates into 
$$\begin{bmatrix}
    0 & 0 & (\det U)\operatorname{Id}_2\\
    0 & A^TJ''A & A^TJ''Q+(\det U) P^T(U^{-1})^T\\
    (\det U)\operatorname{Id}_2 & (\det U)U^{-1} P+Q^TJ''A & (\det U )U^{-1}B+Q^TJ''Q+(\det U)B^T(U^{-1})^T
\end{bmatrix}=\begin{bmatrix}
    0 & 0 &\operatorname{Id}_2\\
    0 & J'' & 0\\
    \operatorname{Id}_2 & 0 & 0
\end{bmatrix}
$$
We see that $\det U=1$, hence the condition is equivalent to 
$$U\in SL(2), \ A\in SO(J''),\ U^{-1}P+Q^TJ''A=0,\ U^{-1}B+(U^{-1}B)^T+Q^TJ''Q=0
$$
Denote by $H=\operatorname{Stab}_{SO(2n)}(E_{1,2n}-E_{2,2n-1})$. One concludes, as in the previous chapters, the unipotent part $U(H)$ is the group with identities on diagonal blocks and $H/U(H)$ is the group $SL(2)\times SO(J'')\cong SL(2)\times SO(2n-4)$.

Now consider the following copy of $\mathbb{C}^\times$ in $SO(2n)$: $S=\{\operatorname{diag}(x\operatorname{Id}_2, \operatorname{Id}_{2n-4}, x^{-1}\operatorname{Id}_2)|x\in\mathbb{C}^{\times}\}$. One checks easily that $S\cap H=\{\operatorname{diag}(\pm\operatorname{Id}_2, \operatorname{Id}_{2n-4}, \pm\operatorname{Id}_2)|x\in\mathbb{C}^{\times}\}\cong \mu_2$, $S$ normalize $H$ and $P=SH$ is parabolic. Indeed, with the induced action of $P$ on the projectivization of the minimal nilpotent orbit $\mathcal{O}_{\min}(\mathfrak{so}_{2n})$, $P$ is the stabilizer of the line $\mathbb{C}(E_{1,2n}-E_{2,2n-1})$.

Write $S=\operatorname{Spec}\mathbb{C}[t,t^{-1}]$. For an irreducible representation $\rho:H\rightarrow GL(V)$ of $H$, we consider the corresponding vector bundle $\mathcal{E}_V$ on $G/H$. Push it forward along the quotient map $q:G/H\rightarrow G/P$, as before we get 
$$(q_*\mathcal{E}_V)_{[1]}=\begin{cases}
    \displaystyle \bigoplus_{i\in\mathbb{Z}} \mathbb{C}t^{2i}\otimes V^\vee, & \rho|_{\mu_2}\  \text{is trivial}\\
    \displaystyle \bigoplus_{i\in\mathbb{Z}} \mathbb{C}t^{2i+1}\otimes V^\vee, & \rho|_{\mu_2} \ \text{is nontrivial}
\end{cases}
$$
as $P$ representations, and each summand $\mathbb{C}t^j\otimes V$ is irreducible. Now any irreducible representation of $H$ is an irreducible representation of $H/U(H)\cong SL(2)\times SO(2n-4)$, which is can be classified by $W\boxtimes W'$ where $W$ is an irreducible $SL(2)$ module and $W'$ is an irreducible $SO(2n-4)$ module.

Irreducible $SO(2n-4)$ representations are classified by dominant integral weights $(\lambda_1,\lambda_2,...,\lambda_{n-2})$ where $\lambda_i\in\mathbb{Z}$, $\lambda_1\geq\lambda_2\geq...\geq\lambda_{n-3}\geq |\lambda_{n-2}|$. While the $SL(2)$ irreducible representations classified by nonnegative integers, all representations of $V$ can be classified by weight in the lattice $\mathbb{Z}^{n-1}$. For a weight $(\lambda, \lambda_1,\lambda_2,...,\lambda_{n-2})$ where $\lambda\geq 0,\  \lambda_1\geq\lambda_2\geq...\geq\lambda_{n-3}\geq |\lambda_{n-2}|$.

Now consider the following maximal torus in $P$
$$T=\{\operatorname{diag}(t_1,...,t_n, t_3^{-1},...,t_n^{-1},t_1^{-1},t_2^{-1})| t_1,...,t_n\in\mathbb{C}^{\times}\}
$$
Replace $t_1$ with $ax$ and $t_2$ with $a^{-1}x$, the action of $T$ on the lowest weight vector $\mathbb{C}t^{-j}\otimes v^*$ is given by 
$$\operatorname{diag}(ax,a^{-1}x,t_3,....,t_n,t_3^{-1},...,t_n^{-1},a^{-1}x^{-1},ax^{-1}) (t^{-j}\otimes v^*)=x^{j}a^{-\lambda}t_3^{-\lambda_1}t_4^{-\lambda_2}...t_n^{-\lambda_{n-2}}(t^{-j}\otimes v^*)
$$
While $x^{j}a^{-\lambda}=(ax)^{-\frac{\lambda-j}{2}}(a^{-1}x)^{-\frac{-\lambda-j}{2}}$, we see that $\mathbb{C}t^{-j}\otimes V^\vee$ is the dual representation of the irreducible representation with the highest weight $(\frac{\lambda-j}{2},\frac{-\lambda-j}
{2},\lambda_1,...,\lambda_{n-2})$. So 
$$q_*\mathcal{E}_V=\bigoplus_{j\equiv\lambda(\operatorname{mod}2)}\mathcal{E}(\frac{\lambda-j}{2},\frac{-\lambda-j}
{2},\lambda_1,...,\lambda_{n-2})
$$
Now the dimension of minimal orbit of $\mathfrak{so}_{2n}$ is $4n-6$. By Proposition \ref{localseqMCM}, the pushforward of $\mathcal{E}_V$ on minimal orbit closure is maximal Cohen-Macaulay if and only if $$H^m(\mathcal{E}_V,G/H)=\bigoplus_{j\equiv\lambda (\operatorname{mod} 2)}H^m(\mathcal{E}(\frac{\lambda-j}{2},\frac{-\lambda-j}
{2},\lambda_1,...,\lambda_{n-2}),G/P)=0
$$ 
for $1\leq m \leq 4n-8$. By BWB theorem \ref{BWBparabolic}, this is equivalent to that $(\frac{\lambda-j}{2},\frac{-\lambda-j}
{2},\lambda_1,...,\lambda_{n-2})+\rho$ is either singular, or the Weyl group element $w$ sending it to a dominant weight has $l(w)\in\{0,4n-7\}$. Here $\rho$ for $\mathfrak{so}_{2n}$ is $(n-1,n-2,...,0)$. Let $k=\frac{\lambda-j}{2}$, then $\frac{-\lambda-j}{2}=k-\lambda$, when $j$ goes over all integers $\equiv \lambda (\operatorname{mod} 2)$, $k$ goes through all integers, so the weight we are considering is $\Lambda=(k+n-1,k-\lambda+n-2,\lambda_1+n-3,...,\lambda_{n-2})$. 

Since $\lambda\geq 0$, we have $k+n-1> k-\lambda+n-2$. Also we see by $\lambda_1\geq\lambda_2\geq...\geq\lambda_{n-3}\geq |\lambda_{n-2}|$ that $\lambda_1+n-3>...>\lambda_{n-3}+1>|\lambda_{n-2}|$. So when $\Lambda$ is nonsingular, the length of $w$ can be computed by:
\begin{align}\notag
l(w)&=\{\alpha\in\Phi^+|\langle\Lambda,\alpha^\vee\rangle<0\}\\\notag
&=\#\{i|k+n-1<\lambda_i+n-2-i\}+\#\{i|k-\lambda+n-2<\lambda_i+n-2-i\}\\\notag
&\quad+\ \#\{i|k+n-1<-\lambda_i-n+2+i\}+\#\{i|k-\lambda+n-2<-\lambda_i-n+2+i\}\\\notag
&\quad +\  1_{k+n-1+k-\lambda+n-2<0}\\\notag
&\leq 4\times(n-2)+1\\\notag
&=4n-7
\end{align}
The equal sign holds if and only if all four sets have cardinality $n-2$ and $2k+2n-\lambda-3<0$, i.e., $k< -n+\frac{\lambda+3}{2}$. Since we have already supposed $\lambda\geq 0$ and $\lambda_1>\lambda_2\geq...\geq\lambda_{n-3}\geq |\lambda_{n-2}|$, we see that the first condition above is just $k+n-1<-(\lambda_1+n-3)$. When this holds, since $n\geq 2$, we see that 
$$k<-\lambda_1-2n+4=-n+4-\lambda_1-n\leq -n+2
$$
then we have $k\leq-n+1<-n+\frac{\lambda+3}{2}$, so the second condition automatically holds.

On the other hand, we also have $l(w)\geq 0$ with equal sign holds if and only if $k-\lambda+n-2>\lambda_1+n-3$, i.e., $k>\lambda+\lambda_1-1$. In summary, we have that when $\Lambda$ is nonsingular, there should be
\begin{enumerate}
    \item $l(w)=4n-7 \iff k<-\lambda_1-2n+4$.
    \item $l(w)=0 \iff k>\lambda+\lambda_1-1 $.
\end{enumerate}
So for $-\lambda_1-2n+4\leq k \leq \lambda+\lambda_1-1$, $\Lambda$ should be singular, then there is $k+n-1=\pm(\lambda_i+n-2-i)$ or $k-\lambda+n-2=\pm(\lambda_i+n-2-i)$ or $k+n-1+k-\lambda+n-2=0$. Again define the set of gaps
$$A=\{N|N\in\mathbb{Z},\  -\lambda_1-2n+4\leq N\leq \lambda+\lambda_1-1, \ N\neq\lambda_i-i-1,\ N\neq-\lambda_i-2n+3+i\ \text{for all}\ i\}
$$
We see that when $k\in A$, then $k+n-1\neq\pm(\lambda_i+n-2-i)$, so $k-\lambda+n-2=\pm(\lambda_i+n-2-i)$ or $2k+2n-3-\lambda=0$. Then $\lambda\in\{k-\lambda_i+i,k+2n-4+\lambda_i-i,2k+2n-3\}$. So $q_*\mathcal{E}_V$ is maximal Cohen-Macaulay if and only if 
$\lambda$ is in the set $\bigcap_{k\in A}(\bigcup_{i=1}^{n=2}\{k-\lambda_i+i,k+2n-4+\lambda_i-i\}\cup\{2k+2n-3\})$. So in summary,

\begin{theorem}\label{Dtypetheorem}
Suppose $\mathcal{O}_{\min}$ is the minimal orbit in $\mathfrak{so}_{2n}$ $(n\geq 3)$ with respect to the conjugation action of $SO(2n)$ , and $H$ is the stabilizer. An equivariant bundle $\mathcal{E}_V=SO(2n)\times_H V^\vee$ corresponding to an irreducible representation $V$ of $H$ is determined by a dominant weight $(\lambda,\lambda_1,\lambda_2...\lambda_{n-2})$ with $\lambda\geq 0$ and $\lambda_1\geq...\geq\lambda_{n-3}\geq|\lambda_{n-2}|$ in $H$'s lattice $\mathbb{Z}^{n}$. The pushforward $i_*\mathcal{E}_V$ is maximal Cohen-Macaulay on the orbit closure $\overline{\mathcal{O}_{\min}}$ if and only if
$$\lambda\in\bigcap_{k\in A}\left(\bigcup_{i=1}^{n-2}\{k-\lambda_i+i,k+2n-4+\lambda_i-i\}\cup\{2k+2n-3\}\right)$$
where 
$$A=\{N|N\in\mathbb{Z},\  -\lambda_1-2n+4\leq N\leq \lambda+\lambda_1-1, \ N\neq\lambda_i-i-1,\ N\neq-\lambda_i-2n+3+i\ \text{for all}\ i\}
$$
\end{theorem}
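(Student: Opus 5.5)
The proof follows the same template as Theorems \ref{SLMCM} and \ref{Ctypetheorem}; most of the work is in the discussion preceding the statement, and the plan is to assemble it in order.

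\textbf{Reduction to cohomology on $G/H$.} Since $i_*\mathcal{E}_V$ is reflexive and $\overline{\mathcal{O}_{\min}}$ is Gorenstein of dimension $4n-6$ with the isolated singular point $0$, Proposition \ref{localseqMCM} reduces maximal Cohen--Macaulayness to the vanishing
\[
H^m(G/H,\mathcal{E}_V)=0 \quad\text{for } 1\le m\le 4n-8 .
\]
We use the torus $S=\{\operatorname{diag}(x\operatorname{Id}_2,\operatorname{Id}_{2n-4},x^{-1}\operatorname{Id}_2)\}$, which satisfies $S\cap H\cong\mu_2$, normalizes $H$, and has $SH=P$ parabolic. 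Hence $q:G/H\to G/P$ is affine with fibre $S/\mu_2\cong\mathbb{G}_m$.

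\textbf{Decomposing $q_*\mathcal{E}_V$.} The key point is that $S$ commutes with the Levi factor $SL(2)\times SO(2n-4)$ of $H$. Consequently each summand $\mathbb{C}t^{j}\otimes V^\vee$ is $P$-stable and irreducible. Computing torus weights as above, with $t_1=ax$ and $t_2=a^{-1}x$, gives
\[
q_*\mathcal{E}_V=\bigoplus_{j\equiv\lambda \,(\operatorname{mod} 2)}\mathcal{E}\Bigl(\tfrac{\lambda-j}{2},\tfrac{-\lambda-j}{2},\lambda_1,\dots,\lambda_{n-2}\Bigr).
\]
Here one must check that the parity of $\lambda$ matches the $\mu_2$-action, which holds because $-\operatorname{Id}_2\in SL(2)$ acts on the highest-weight-$\lambda$ module by $(-1)^\lambda$. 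Since $q$ is affine, $H^m(G/H,\mathcal{E}_V)$ is the direct sum of the corresponding cohomology groups on $G/P$.

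\textbf{Applying Borel--Weil--Bott.} By Theorem \ref{BWBparabolic}, each summand has vanishing $H^m$ for $1\le m\le 4n-8$ exactly when
\[
\Lambda=(k+n-1,\;k-\lambda+n-2,\;\lambda_1+n-3,\dots,\lambda_{n-2})
\]
is singular, or regular with $l(w)\in\{0,4n-7\}$. Here $k=\tfrac{\lambda-j}{2}$ ranges over all of $\mathbb{Z}$. The length $l(w)$ counts positive $D_n$-roots $e_a\pm e_b$ pairing negatively with $\Lambda$.

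\textbf{The main obstacle: the length bound.} The step I expect to be hardest is a careful count of $l(w)$ showing:
\begin{itemize}
\item[(i)] $l(w)=4n-7$ if and only if $k<-\lambda_1-2n+4$;
\item[(ii)] $l(w)=0$ if and only if $k>\lambda+\lambda_1-1$.
\end{itemize}
Since $\lambda\ge0$ and the last $n-2$ entries of $\Lambda$ are strictly decreasing with $\lambda_{n-3}+1>|\lambda_{n-2}|$, the only roots that can contribute are $e_1\pm e_i$ and $e_2\pm e_i$ for $i\ge3$, together with $e_1+e_2$. This gives at most $4(n-2)+1=4n-7$ contributions. The two extremes force the first two coordinates to be either below $-(\lambda_1+n-3)$ or above $\lambda_1+n-3$. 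One must also verify that in case (i) the condition $2k+2n-3-\lambda<0$ is automatic. The sign subtlety of $\lambda_{n-2}$, which can be negative in type $D$, must be handled by noting that pairings with $e_a\pm e_n$ only involve $|\lambda_{n-2}|$.

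\textbf{Reading off the criterion.} For $k$ in the middle range $-\lambda_1-2n+4\le k\le\lambda+\lambda_1-1$, $\Lambda$ must be singular. Singularity means one of the following holds:
\begin{itemize}
\item[(a)] $k+n-1=\pm(\lambda_i+n-2-i)$ for some $i$;
\item[(b)] $k-\lambda+n-2=\pm(\lambda_i+n-2-i)$ for some $i$;
\item[(c)] $2k+2n-3-\lambda=0$.
\end{itemize}
Removing the $k$ for which (a) holds, namely $k=\lambda_i-i-1$ or $k=-\lambda_i-2n+3+i$, leaves exactly the gap set $A$. For $k\in A$, singularity forces (b) or (c), that is, $\lambda\in\{k-\lambda_i+i,\;k+2n-4+\lambda_i-i,\;2k+2n-3\}$. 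Intersecting over $k\in A$ gives the stated condition, and the converse holds by the same equivalences.
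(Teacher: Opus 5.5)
Your proposal is correct and follows the paper's own proof essentially step by step. The steps are: the reduction via Proposition~\ref{localseqMCM} to $H^m(G/H,\mathcal{E}_V)=0$ for $1\le m\le 4n-8$; the affine map $q\colon G/H\to G/P$ and the splitting of $q_*\mathcal{E}_V$ into $P$-irreducibles indexed by $j\equiv\lambda \pmod 2$; Borel--Weil--Bott; the bound $l(w)\le 4n-7$ with the two extremes $k<-\lambda_1-2n+4$ and $k>\lambda+\lambda_1-1$; and reading off the gap set $A$.

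One caveat you inherit from the paper concerns $n=3$. There the $SO(2)$-weight $\lambda_1$ can be negative, the chain $\lambda_{n-3}+1>|\lambda_{n-2}|$ you invoke is vacuous, and the extremes are governed by $|\lambda_1|$ rather than $\lambda_1$. For instance, $\lambda=0$, $\lambda_1=-5$ gives $A=\varnothing$, yet at $k=0$ the weight $\Lambda=(2,1,-5)$ is regular with $l(w)=2$. So the argument, like the paper's, is valid as written only for $n\ge 4$ or $\lambda_1\ge 0$.
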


\subsection{On the minimal orbit closure of $\mathfrak{so}_{2n+1}$}
Now let's do similar work on $SO(2n+1)$. We use the quadratic form $J=\begin{bmatrix}0 & 0 & I'\\ 0 & 1 & 0 \\I'^T & 0 & 0\end{bmatrix}$ where $I'$ is the permutation matrix $\begin{bmatrix}
    0 & \operatorname{Id}_2\\
    \operatorname{Id}_{n-2} & 0 
\end{bmatrix}$. A typical element in the minimal orbit of $SO(2n+1)=SO(J)$ is $E_{1,2n+1}-E_{2,2n}$. Denote$J''=\begin{bmatrix} 0 &  0 &\operatorname{Id}_{n-2}\\
0 & 1 & 0\\
\operatorname{Id}_{n-2} & 0 & 0
\end{bmatrix} $, the stabilizer $H=\operatorname{Stab}_{SO(2n+1)}(E_{1,2n+1}-E_{2,2n})$ contains matrices in the form of
$$\begin{bmatrix}
U & P & B\\
0 & A & Q\\
0 & 0 & (U^{-1})^T
\end{bmatrix}
$$
where
$$U\in SL(2), \ A\in SO(J''),\ U^{-1}P+Q^TJ''A=0,\ U^{-1}B+(U^{-1}B)^T+Q^TJ''Q=0$$
\ 
The levi factor is $H/U(H)=SL(2)\times SO(J'')\cong SL(2)\times SO(2n-3)$.

Again consider the following copy of $\mathbb{C}^\times$ in $SO(2n+1)$: $S=\{\operatorname{diag}(x\operatorname{Id}_2, \operatorname{Id}_{2n-3}, x^{-1}\operatorname{Id}_2)|x\in\mathbb{C}^{\times}\}$. One checks easily that $S\cap H=\{\operatorname{diag}(\pm\operatorname{Id}_2, \operatorname{Id}_{2n-3}, \pm\operatorname{Id}_2)|x\in\mathbb{C}^{\times}\}\cong \mu_2$, $S$ normalize $H$ and $P=SH$ is parabolic. Indeed, with the induced action of $P$ on the projectivization of the minimal nilpotent orbit $\mathcal{O}_{\min}(\mathfrak{so}_{2n+1})$, $P$ is the stabilizer of the line $\mathbb{C}(E_{1,2n+1}-E_{2,2n})$.

Write $S=\operatorname{Spec}\mathbb{C}[t,t^{-1}]$. For an irreducible representation $\rho:H\rightarrow GL(V)$ of $H$, we consider the corresponding vector bundle $\mathcal{E}_V$ on $G/H$. Push it forward along the quotient map $q:G/H\rightarrow G/P$, as before we get 
$$(q_*\mathcal{E}_V)_{[1]}=\begin{cases}
    \displaystyle \bigoplus_{i\in\mathbb{Z}} \mathbb{C}t^{2i}\otimes V^\vee, & \rho|_{\mu_2}\  \text{is trivial}\\
    \displaystyle \bigoplus_{i\in\mathbb{Z}} \mathbb{C}t^{2i+1}\otimes V^\vee, & \rho|_{\mu_2} \ \text{is nontrivial}
\end{cases}
$$
as $P$ representations, and each summand $\mathbb{C}t^j\otimes V$ is irreducible. Now any irreducible representation of $H$ is an irreducible representation of $H/U(H)\cong SL(2)\times SO(2n-3)$, which is can be classified by $W\boxtimes W'$ where $W$ is an irreducible $SL(2)$ module and $W'$ is an irreducible $SO(2n-3)$ module.

Irreducible $SO(2n-3)$ representations are classified by dominant integral weights $(\lambda_1,\lambda_2,...,\lambda_{n-2})$ where $\lambda_i\in\mathbb{Z}$, $\lambda_1\geq\lambda_2\geq...\geq\lambda_{n-3}\geq \lambda_{n-2}\geq 0$. While the $SL(2)$ irreducible representations classified by nonnegative integers, all representations of $V$ can be classified by weight in the lattice $\mathbb{Z}^{n-1}$. For a weight $(\lambda, \lambda_1,\lambda_2,...,\lambda_{n-2})$ where $\lambda\geq 0,\  \lambda_1\geq\lambda_2\geq...\geq\lambda_{n-3}\geq \lambda_{n-2}\geq 0$.

Now consider the following maximal torus in $P$
$$T=\{\operatorname{diag}(t_1,...,t_n,1, t_3^{-1},...,t_n^{-1},t_1^{-1},t_2^{-1})| t_1,...,t_n\in\mathbb{C}^{\times}\}
$$
Replace $t_1$ with $ax$ and $t_2$ with $a^{-1}x$, the action of $T$ on the lowest weight vector $\mathbb{C}t^{-j}\otimes v^*$ is given by 
$$\operatorname{diag}(ax,a^{-1}x,t_3,....,t_n,1,t_3^{-1},...,t_n^{-1},a^{-1}x^{-1},ax^{-1}) (t^{-j}\otimes v^*)=x^{j}a^{-\lambda}t_3^{-\lambda_1}t_4^{-\lambda_2}...t_n^{-\lambda_{n-2}}(t^{-j}\otimes v^*)
$$
While $x^{j}a^{-\lambda}=(ax)^{-\frac{\lambda-j}{2}}(a^{-1}x)^{-\frac{-\lambda-j}{2}}$, we see that $\mathbb{C}t^j\otimes V$ is the dual representation of the irreducible representation with the highest weight $(\frac{\lambda-j}{2},\frac{-\lambda-j}
{2},\lambda_1,...,\lambda_{n-2})$. So 
$$q_*\mathcal{E}_V=\bigoplus_{j\equiv\lambda(\operatorname{mod}2)}\mathcal{E}(\frac{\lambda-j}{2},\frac{-\lambda-j}
{2},\lambda_1,...,\lambda_{n-2})
$$

Now the dimension of minimal orbit of $\mathfrak{so}_{2n+1}$ is $4n-4$. By Proposition \ref{localseqMCM}, the pushforward of $\mathcal{E}_V$ on minimal orbit closure is maximal Cohen-Macaulay if and only if $$H^m(\mathcal{E}_V,G/H)=\bigoplus_{j\equiv\lambda (\operatorname{mod} 2)}H^m(\mathcal{E}(\frac{\lambda-j}{2},\frac{-\lambda-j}
{2},\lambda_1,...,\lambda_{n-2}),G/P)=0
$$ 
for $1\leq m \leq 4n-6$. By BWB theorem \ref{BWBparabolic}, this is equivalent to that $(\frac{\lambda-j}{2},\frac{-\lambda-j}
{2},\lambda_1,...,\lambda_{n-2})+\rho$ is either singular, or the Weyl group element $w$ sending it to a dominant weight has $l(w)\in\{0,4n-5\}$. Here $\rho$ for $\mathfrak{so}_{2n+1}$ is $(n-\frac{1}{2},n-\frac{3}{2},...,\frac{1}{2})$. Let $k=\frac{\lambda-j}{2}$, then $\frac{-\lambda-j}{2}=k-\lambda$, when $j$ goes over all integers $\equiv \lambda (\operatorname{mod} 2)$, $k$ goes through all integers, so the weight we are considering is $\Lambda=(k+n-\frac{1}{2},k-\lambda+n-\frac{3}{2},\lambda_1+n-\frac{5}{2},...,\lambda_{n-2}+\frac{1}{2})$. 

Since $\lambda\geq 0$, we have $k+n-\frac{1}{2}> k-\lambda+n-\frac{3}{2}$. Also we see by $\lambda_1\geq\lambda_2\geq...\geq\lambda_{n-3}\geq \lambda_{n-2}\geq 0$ that $\lambda_1+n-\frac{5}{2}>...>\lambda_{n-2}+\frac{1}{2}>0$. So when $\Lambda$ is nonsingular, the length of $w$ can be computed by:
\begin{align}\notag
l(w)&=\{\alpha\in\Phi^+|\langle\Lambda,\alpha^\vee\rangle<0\}\\\notag
&=\#\{i|k+n-\frac{1}{2}<\lambda_i+n-\frac{3}{2}-i\}+\#\{i|k-\lambda+n-\frac{3}{2}<\lambda_i+n-\frac{3}{2}-i\}\\\notag
&\quad+\ \#\{i|k+n-\frac{1}{2}<-\lambda_i-n+\frac{3}{2}+i\}+\#\{i|k-\lambda+n-\frac{3}{2}<-\lambda_i-n+\frac{3}{2}+i\}\\\notag
&\quad + 1_{k+n-\frac{1}{2}+k-\lambda+n-\frac{3}{2}<0}+\ 1_{k+n-\frac{1}{2}<0}+1_{k-\lambda+n-\frac{3}{2}<0}\\\notag
&\leq 4\times(n-2)+1+1+1\\\notag
&=4n-5
\end{align}
The equal sign holds if and only if all four sets have cardinality $n-2$ and $k+n-\frac{1}{2}<0$. This condition is equivalent to $k+n-\frac{1}{2}<-\lambda_1-n+\frac{5}{2}$, i.e., $k<-\lambda_1-2n+3$. On the other hand, we have an apparent equivalence that $l(w)=0$ if and only if $k-\lambda+n-\frac{3}{2}>\lambda_1+n-\frac{5}{2}$, i.e., $k>\lambda+\lambda_1-1$. Hence, when $\Lambda$ is nonsingular, $l(w)=0$ or $4n-5$ if and only if $k>\lambda+\lambda_1-1$ or $k<-\lambda_1-2n+3$ respectively. In order to satisfy the cohomology vanishing condition, for $-\lambda_1-2n+3\leq k\leq\lambda+\lambda_1-1$, $\Lambda$ should be singular.

Define the set of gaps
$$A=\{N|N\in\mathbb{Z}, -\lambda_1-2n+3\leq N\leq\lambda+\lambda_1-1, N\neq \lambda_i-1-i,\ N\neq-\lambda_i-2n+2+i\ \text{for all}\ i\}
$$
When $k\in A$, $k+n-\frac{1}{2}\neq\pm(\lambda_i+n-\frac{3}{2}-i)$, and obviously $k+n-\frac{1}{2}\neq 0$ because its not an integer. Then to make $\Lambda$ singular, we should have $k-\lambda+n-\frac{3}{2}=\pm(\lambda_i+n-\frac{3}{2}-i)$ for some $i$ or $k-\lambda+n-\frac{3}{2}+k+n-\frac{1}{2}=0$, i.e., $k\in\{\lambda+\lambda_i-i,\lambda-\lambda_i-2n+3+i\}$ for some $i$ or $2k-\lambda+2n-2=0$. So $\lambda=2k+2n-2$ or it is in $\{k-\lambda_i+i,k+\lambda_i+2n-3-i\}$ for each $k\in A$ and some $1\leq i\leq n-2$. 

\begin{theorem}\label{Btypetheorem}
Suppose $\mathcal{O}_{\min}$ is the minimal orbit in $\mathfrak{so}_{2n+1}$ $(n\geq 3)$ with respect to the conjugation action of $SO(2n+1)$ and $H$ is the stabilizer. An equivariant bundle $\mathcal{E}_V=SO(2n+1)\times_H V^\vee$ corresponding to an irreducible representation $V$ of $H$ is determined by a dominant weight $(\lambda,\lambda_1,\lambda_2...\lambda_{n-2})$ with $\lambda\geq 0$ and $\lambda_1\geq...\geq\lambda_{n-3}\geq\lambda_{n-2}\geq0$ in $H$'s lattice $\mathbb{Z}^{n}$. The pushforward $i_*\mathcal{E}_V$ is maximal Cohen-Macaulay on the orbit closure $\overline{\mathcal{O}_{\min}}$ if and only if 
for any element $k$ in the following set
$$\lambda\in\bigcap_{k\in A}\left(\bigcup_{i=1}^{n-2}\{k-\lambda_i+i,k+\lambda_i+2n-3-i\}\cup\{2k+2n-2\}\right)$$
where 
$$A=\{N|N\in\mathbb{Z}, -\lambda_1-2n+3\leq N\leq\lambda+\lambda_1-1, N\neq \lambda_i-1-i,\ N\neq-\lambda_i-2n+2+i\ \text{for all}\ i\}
$$
\end{theorem}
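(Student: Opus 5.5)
The proof follows the same route as the $D_n$ case in Theorem \ref{Dtypetheorem}; the setup has essentially been done in the paragraphs preceding the statement. First, $i_*\mathcal{E}_V$ is reflexive and a vector bundle on $\mathcal{O}_{\min}$ by Proposition \ref{GEQUITHENVECT}. So by Proposition \ref{localseqMCM}, applied to the $(4n-4)$-dimensional affine variety $\overline{\mathcal{O}_{\min}}$ with isolated singular point $0$, it is maximal Cohen-Macaulay if and only if $H^m(\mathcal{E}_V,G/H)=0$ for $1\leq m\leq 4n-6$.

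Next we use the affine map $q:G/H\to G/P$, whose fibre is $P/H\cong S/\mu_2\cong\mathbb{G}_m$, together with the fact that $S$ commutes with the Levi factor $SL(2)\times SO(2n-3)$. These give the decomposition of $q_*\mathcal{E}_V$ into the line-bundle-type summands $\mathcal{E}(\frac{\lambda-j}{2},\frac{-\lambda-j}{2},\lambda_1,\dots,\lambda_{n-2})$, with $j\equiv\lambda \pmod 2$. Cohomology then becomes a direct sum over these summands. By Theorem \ref{BWBparabolic}, vanishing in the range $1\leq m\leq 4n-6$ is equivalent to requiring, for every integer $k=\frac{\lambda-j}{2}$, that
$$\Lambda_k=\bigl(k+n-\tfrac12,\;k-\lambda+n-\tfrac32,\;\lambda_1+n-\tfrac52,\dots,\lambda_{n-2}+\tfrac12\bigr)$$
is either singular or regular with $l(w)\in\{0,4n-5\}$. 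Here $4n-5=\dim G/P$, which matches $|\Phi^+(B_n)|-|\Phi^+(\text{Levi})|$.

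The core step is the combinatorics of $l(w)$ for regular $\Lambda_k$. We count positive roots $e_a\pm e_b$ and $e_a$ of $B_n$ with negative pairing. Only roots involving the first two coordinates contribute, since the tail is strictly decreasing and positive. The count is bounded by $4(n-2)+3$. The maximum $4n-5$ is attained exactly when $k+n-\frac12<-(\lambda_1+n-\frac52)$, i.e.\ $k<-\lambda_1-2n+3$; one then checks that the remaining conditions follow from $\lambda\geq0$. The minimum $0$ is attained exactly when $k-\lambda+n-\frac32>\lambda_1+n-\frac52$, i.e.\ $k>\lambda+\lambda_1-1$. Consequently every $k$ in the window $-\lambda_1-2n+3\leq k\leq\lambda+\lambda_1-1$ must give a singular $\Lambda_k$, and this condition is also sufficient.

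Finally we translate singularity into the stated set condition. Since the entries are half-integers, a zero coordinate never occurs. The first coordinate coincides up to sign with a tail entry exactly when $k=\lambda_i-1-i$ or $k=-\lambda_i-2n+2+i$; these are the excluded values defining $A$. For the remaining $k$, which form $A$, singularity must come from the second coordinate. Either it equals $\pm(\lambda_i+n-\frac32-i)$, giving $\lambda\in\{k-\lambda_i+i,\;k+\lambda_i+2n-3-i\}$, or the sum of the first two coordinates vanishes, giving $\lambda=2k+2n-2$. The difference of the first two coordinates is $\lambda+1\neq0$, so it never vanishes. Intersecting over $k\in A$ yields the stated criterion.

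The main obstacle is the length computation: verifying that the extremal conditions are exactly these two inequalities, in particular that the indicator terms $1_{k+n-\frac12<0}$ and $1_{k-\lambda+n-\frac32<0}$ and the sum term are automatically satisfied when all four counts are full. This needs careful use of $\lambda\geq0$, $\lambda_{n-2}\geq0$ and $n\geq3$.
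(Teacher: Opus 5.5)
Your proposal is correct and follows the paper's proof essentially step for step. The shared steps are: the reduction via Proposition~\ref{localseqMCM} to vanishing of $H^m(\mathcal{E}_V,G/H)$ for $1\leq m\leq 4n-6$; the splitting of $q_*\mathcal{E}_V$ along the affine map $G/H\to G/P$; the same count of $l(w)$, bounded by $4(n-2)+3$, with extremes exactly $k<-\lambda_1-2n+3$ and $k>\lambda+\lambda_1-1$; and the same translation of singularity of $\Lambda_k$ for $k\in A$ into the stated membership condition on $\lambda$. Your remark that the first coordinate of $\Lambda_k$ exceeds the second by $\lambda+1>0$, so the three indicator terms are automatic once $k<-\lambda_1-2n+3$, is precisely the check the paper leaves implicit.
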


\section{Highest-root vector orbit and exceptional cases}

In order to generalize our results to exceptional types of simple Lie algebras, we need to understand the story in the previous chapters in a coordinate-free manner. All constructions in this chapter on  simple Lie algebras are standard and can be found in \cite{HumphreysLieAlgebras}.

\subsection{Highest-root vector orbit method and $\operatorname{Spin}$-equivariant sheaves}\label{abstractanalysis}
Suppose $\mathfrak{g}$ is a simple Lie algebra over $\mathbb{C}$ and $G$ is the corresponding simply connected Lie group. Fix a maximal toral subalgebra $\mathfrak{t}$ in $\mathfrak{g}$, we have a Cartan decomposition
$$\mathfrak{g}=\mathfrak{t}\oplus\bigoplus_{\alpha\in\Phi}\mathfrak{g}_\alpha
$$
where $\Phi$ is the irreducible root system of $\mathfrak{g}$. Suppose $\Delta=\{\alpha_1,\alpha_2,...,\alpha_n\}$ is a basis of $\Phi$ and $\theta$ is the highest root with respect to this basis. There exist $e_\theta, f_\theta\in\mathfrak{g}$ and $h_\theta\in\mathfrak{t}$ such that $\mathfrak{g}_\theta=\mathbb{C}e_\theta$, $\mathfrak{g}_{-\theta}=\mathbb{C}f_\theta$ and $\mathbb{C}\langle e_\theta,f_\theta,h_\theta\rangle\cong\mathfrak{sl}(2,\mathbb{C})$, where 
$[h_\theta,e_\theta]=2e_\theta$, $[h_\theta,f_\theta]=-2f_\theta$ and $[e_\theta,f_\theta]=h_\theta$. In addition, suppose $\kappa$ is the Killing form, we know that $h_\theta=\frac{2t_\theta}{\kappa(t_\theta,t_\theta)}$ where $t_\theta$ is the unique element in $\mathfrak{t}$ such that $\kappa(t_\theta,h)=\theta(h)$ for any $h\in\mathfrak{t}$.

Now we consider the action of $h_\theta$ on $\mathfrak{g}$. Since $h_\theta\in\mathfrak{t}$, we have $[h_\theta,\mathfrak{t}]=0$. For $x_\alpha\in\mathfrak{g}_\alpha$, we have $[h_\theta,x_\alpha]=\alpha(h_\theta)x_\alpha$. While $\theta$ is the highest root, it is a long root, so we get that 
$$|\alpha(h_\theta)|=\frac{2|\kappa(t_\alpha,t_\theta)|}{(t_\theta,t_\theta)}=\frac{2|(\alpha,\theta)|}{(\theta,\theta)}=2\frac{||\alpha||}{||\theta||}|\cos\langle\alpha,\theta\rangle|\leq 2
$$
with the equality holds only if $\cos\langle\alpha,\theta\rangle=\pm 1$, i.e., $\alpha=\pm\theta$. So we have a decomposition of $\mathfrak{g}$ with respect to the action of $h_\theta$:
$$\mathfrak{g}=\mathbb{C}f_\theta\oplus\mathfrak{g}_{-1}\oplus\mathfrak{g}_0\oplus\mathfrak{g}_1\oplus\mathbb{C}e_\theta
$$
where $\mathfrak{g}_{\pm 1}=\bigoplus_{\langle\alpha,\theta\rangle=\pm 1}\mathfrak{g}_\alpha$ and $\mathfrak{g}_0=\mathfrak{t}\oplus\bigoplus_{\langle\alpha,\theta\rangle=0}\mathfrak{g}_\alpha$. Here we denote $\langle\alpha,\theta\rangle=\frac{2(\alpha,\theta)}{(\theta,\theta)}$.

Now for $\alpha\in\Phi$, if $\langle\alpha,\theta\rangle=0$, we see that $(\alpha,\theta)=0$, so $\langle\alpha,\theta\rangle=0$ as well. Then we know that the $\alpha$-string through $\theta$ is symmetric: $\theta-q\alpha,...,\theta,...,\theta+q\alpha$. If $\alpha\in \Phi_+$, we know that $\theta+\alpha$ is not a root, then neither is $\theta-\alpha$. Similarly, $\alpha\in\Phi_-$,  $\theta-\alpha$ is not a root, then neither is $\theta+\alpha$. Hence, we can conclude that none of $\pm(\theta\pm\alpha)$ is a root. Therefore $[\mathfrak{g}_{\pm\theta},\mathfrak{g}_{\pm\alpha}]=0$. On the other hand, for $h\in\mathfrak{t}$, $[h,e_\theta]=\theta(h)e_\theta$, $[h,f_\theta]=-\theta(h)f_\theta$, so for $h\in\ker\theta$, we have $[h,e_\theta]=[h,f_\theta]=0$. So $\mathfrak{g}_0$ can be decomposed into $\mathbb{C}h_\theta\oplus\mathfrak{g}^\natural$ where $\mathfrak{g}^\natural=\ker\theta\oplus\bigoplus_{\langle\alpha,\theta\rangle=0}\mathfrak{g}_\alpha$ is the commutator of $\mathfrak{sl}_2(\theta)=\mathbb{C}\langle e_\theta,f_\theta,h_\theta\rangle$. 

Now let's discuss the commutator of $e_\theta$, which we denote by $\mathfrak{c}(e_\theta)$. Apparently, $\mathfrak{g}^\natural$ is in this commutator. On the other hand, for $\alpha\in\Phi$, $\langle\alpha,\theta\rangle>0$, we see that $\alpha\in\Phi_+$ and that $\theta+\alpha$ is not a root, so $[\mathfrak{g}_\alpha,\mathfrak{g}_\theta]=0$. Therefore, $\mathfrak{g}_1\oplus\mathbb{C}e_\theta\in\mathfrak{c}(e_\theta)$. On the other hand, for those $\alpha$ such that $\langle\alpha,\theta\rangle<0$. we see that $\theta-\langle\alpha,\theta\rangle\alpha$ is a root. While $\theta+\alpha$ is on the $\theta$-string between $\theta$ and $\theta-\langle\alpha,\theta\rangle\alpha$, $\theta+\alpha$ is also a root. Hence,  $[\mathfrak{g}_\alpha,\mathfrak{g}_\theta]=\mathfrak{g}_{\alpha+\theta}\neq0$. For different such $\alpha$, the roots $\alpha+\theta$ are different; hence, the vectors in those $\mathfrak{g}_{\theta+\alpha}$ are linearly independent with each other and also with $e_\theta$. Hence, it's easy to see that $\mathfrak{c}(e_\theta)=\mathfrak{g}^\natural\oplus \mathfrak{g}_1\oplus\mathbb{C}e_\theta$.

If one only pays attention to those elements fixing the $1$-dimensional space $\mathbb{C}e_\theta$, then besides the commutator above, $h_\theta$ is also obtained. One can therefore see that $\mathfrak{c}(\mathbb{C}e_\theta)=\mathbb{C}h_\theta\oplus\mathfrak{c}(e_\theta)=\mathfrak{g}_0\oplus\mathfrak{g}_1\oplus\mathfrak{g}_2$. One can see easily that $\mathfrak{c}(\mathbb{C}e_\theta)$ is parabolic because each positive root has nonnegative inner product with $\theta$.

It's easy to observe that $\mathfrak{g}_1\oplus\mathbb{C}e_\theta$ is nilpotent and normal in both $\mathfrak{c}(e_\theta)$ and $\mathfrak{c}(\mathbb{C}e_\theta)$. One can also easily check that $\mathfrak{g}^\natural$ (or $\mathfrak{g}_0$) is reductive, and the root system of $[\mathfrak{g}^\natural,\mathfrak{g}^\natural]$ is generated by those simple roots that are orthogonal to $\theta$. The Dynkin diagram of $[\mathfrak{g}^\natural,\mathfrak{g}^\natural]$ is obtained by taking the corresponding affine Dynkin diagram of $\mathfrak{g}$ and removing the extra point and those points connected to the extra point, as classified in the following table:
\begin{table}[htbp]
\centering
\renewcommand{\arraystretch}{1.6}
\setlength{\tabcolsep}{9pt}
\begin{tabular}{|c|c|c|c|c|c|c|c|c|c|}
\hline
\textbf{Type of $\mathfrak{g}$}
& $A_n$ & $B_n$ & $C_n$ & $D_n$ & $E_6$ & $E_7$ & $E_8$ & $F_4$ & $G_2$ \\ \hline
\textbf{Type of $[\mathfrak{g}^{\natural},\mathfrak{g}^{\natural}]$}
& $A_{n-2}$ & $A_1\cup B_{n-2}$ & $C_{n-2}$ & $A_1\cup D_{n-2}$ & $A_5$ & $D_6$ & $E_7$ & $C_3$ & $A_1$ \\ \hline
\end{tabular}
\end{table}

At the group level, what happens is that under the adjoint action of $G$ on $\mathfrak{g}$, the stabilizer $H$ of $e_\theta$ has Lie algebra $\mathfrak{c}(\theta)=\mathfrak{g}^\natural\oplus \mathfrak{g}_1\oplus\mathbb{C}e_\theta$, while the stabilizer $P$ of the line $\mathbb{C}e_\theta$ has Lie algebra $\mathfrak{c}(\mathbb{C}e_\theta)=\mathbb{C}h_\theta\oplus\mathfrak{c}(e_\theta)=\mathfrak{g}_0\oplus\mathfrak{g}_1\oplus\mathfrak{g}_2$. Let $S=\theta^\vee(\mathbb{G}_m)=\exp(\mathbb{C}h_\theta)$. Since $[h_\theta,e_\theta]=2e_\theta$, we see that for $s\in S\cong\mathbb{G}_m$, $\operatorname{Ad}(s).e_\theta=s^2e_\theta$. Now consider the group morphism $\varphi: P\rightarrow\mathbb{G}_m$ induced by the action of $P$ on $\mathbb{C}e_\theta$. We see that $\ker\varphi=H$, so $H$ is normal in $P$, and that $S\cap H= S\cap\ker\varphi=\{\pm1\}\cong\mu_2$.

Suppose $L$ is the Levi factor of $H$, then $L$ has Lie algebra $\mathfrak{g}^\natural$ and $[L,L]$ has Lie algebra $[\mathfrak{g}^\natural,\mathfrak{g}^\natural]$. While one can also realize $L$ as the Levi factor of parabolic subgroup $P$, by \cite{ConradReductiveGroupsOverFields} Corollary 9.5.11, since $G$ is simply connected, $[L,L]$ is also simply connected. Then the irreducible representation of $L$ (hence of $H$, since $L$ is the Levi factor of $H$) are classified by dominant weights in $X^*(T\cap H)$, where $T$ is the torus with respect to $\mathfrak{t}$. 

Since $\mathfrak{g}^{\natural}$ commutes with $h_\theta$, we see that $L$ commutes with $S$. From previous paragraphs, it is also easy to check that $(T\cap H)\cdot S=T$ and $(T\cap H)\cap S=H\cap S=\mu_2$.

Since $\operatorname{Lie}(P)=\mathfrak{c}(\mathbb{C}e_\theta)$ is parabolic, we see that $P$ is parabolic as well. Therefore $G/P$ is proper. Now $G/P$ is the orbit of $[\mathbb{C}e_\theta]$ in $\mathbb{P}(\mathfrak{g})$, we see that $G\cdot[\mathbb{C}e_\theta]$ is closed in $\mathbb{P}(\mathfrak{g})$, so $Ge_\theta$ is the minimal nonzero nilpotent orbit $\mathcal{O}_{\min}$. 

In summary, for any simple Lie group $G$, the story we have discussed for $A,B,C,D$ types is:
\begin{enumerate}
    \item Write the minimal orbit  as $\mathcal{O}_{\min}=G/H$, where $H$ is the stabilizer of the adjoint action of a highest-root vector. 
    \item Take an appropriate maximal torus $T$ and find $S\cong\mathbb{G}_m$ in $T$ such that $S\cdot H=P$ is parabolic, $S$ normalizes $H$, $S\cap H=S\cap(T\cap H)\cong\mu_2$ and $S$ commutes with $L$, the Levi factor of $H$.
    \item Under the conditions above, $\mathcal{O}_{\min}=G/H\rightarrow G/P$ is an affine map with fibers $S/\mu_2$, so for an equivariant sheaf $\mathcal{E}_V=G\times_H V^\vee$ on $G/H$ with respect to an $H$-representation $V$, we have $H^i(\mathcal{E}_V,G/H)=H^i(q_*\mathcal{E}_V,G/P)$.
    \item If $V$ is irreducible, $V$ is also an irreducible representation of $L$ and can be classified by a dominant weight $\lambda$ in $X^*(T\cap H)$. Since $L$ commutes with $S$ the $P$-representation with respect to $q_*\mathcal{E}_V$ splits into all possible irreducible representations of $P$ whose restrictions to $H$ is $V^\vee$. Those are irreducible $H$-representations with an $S$ action that is compatible with $S\cap H\cong \mu_2$ action. Those representations can be represented using characters of $P$, so the application of BWB theorem tells us what $H^i(q_*\mathcal{E}_V,G/P)$ are.
    \item Applying Proposition \ref{localseqMCM}, find the necessary and sufficient condition for $\lambda$ such that we have vanishing $H^i(q_*\mathcal{E}_V,G/P)$ for $1\leq i\leq\dim\mathcal{O}_{\min}-2$. Those are the characters giving equivariant maximal Cohen-Macaulay sheaves on $\overline{\mathcal{O}_{\min}}$.
\end{enumerate}

With this story in mind, the first thing we can do is to generalize the story in the previous chapter to $\operatorname{Spin}(2n)$ and $\operatorname{Spin}(2n+1)$.

\begin{theorem}
Suppose $\mathcal{O}_{\min}$ is the minimal orbit in $\mathfrak{so}_{2n}$ $(n\geq 3)$ with respect to the conjugation action of $\operatorname{Spin}(2n)$, and $H$ is the stabilizer. An equivariant bundle $\mathcal{E}_V=\operatorname{Spin}(2n)\times_H V^\vee$ corresponding to an irreducible representation $V$ of $H$ is determined by a dominant weight $(\lambda,\lambda_1,\lambda_2...\lambda_{n-2})$ with $\lambda\geq 0$ and $\lambda_1\geq...\geq\lambda_{n-3}\geq|\lambda_{n-2}|$ in $H$'s lattice $\mathbb{Z}^{n}+(0,\frac{1}{2},...,\frac{1}{2})\mathbb{Z}$. The pushforward $i_*\mathcal{E}_V$ is maximal Cohen-Macaulay on the orbit closure $\overline{\mathcal{O}_{\min}}$ if and only if
$$\lambda\in\bigcap_{k\in A}\left(\bigcup_{i=1}^{n-2}\{k-\lambda_i+i,k+2n-4+\lambda_i-i\}\cup\{2k+2n-3\}\right)$$
where 
$$A=\{N|N\in\mathbb{Z}+\lambda_1,\  -\lambda_1-2n+4\leq N\leq \lambda+\lambda_1-1, \ N\neq\lambda_i-i-1,\ N\neq-\lambda_i-2n+3+i\ \text{for all}\ i\}
$$
\end{theorem}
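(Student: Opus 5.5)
We follow the five-step scheme of Section \ref{abstractanalysis} with $G=\operatorname{Spin}(2n)$, reusing the computation behind Theorem \ref{Dtypetheorem}. The one genuinely new ingredient is the weight lattice of $H$.

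\textbf{Step 1: $H$, $P$, $S$ and the Levi.} Take $e_\theta$ to be the preimage of $E_{1,2n}-E_{2,2n-1}$. Let $H$ be its stabilizer in $\operatorname{Spin}(2n)$, let $P$ be the stabilizer of the line $\mathbb{C}e_\theta$, and put $S=\theta^\vee(\mathbb{G}_m)$. By the general discussion, $S\cap H\cong\mu_2$, $SH=P$, and $S$ commutes with the Levi factor $L$ of $H$. Moreover $[L,L]$ is simply connected of type $A_1\cup D_{n-2}$, so $[L,L]\cong SL(2)\times\operatorname{Spin}(2n-4)$. We use the same torus coordinates $(t_1,\dots,t_n)$ as in the $SO(2n)$ computation, now read in the weight lattice $\mathbb{Z}^n+\tfrac12(1,\dots,1)\mathbb{Z}$ of $\operatorname{Spin}(2n)$. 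The characters of $T\cap H$ are the restrictions of these weights. In the coordinates $(\lambda,\lambda_1,\dots,\lambda_{n-2})$ coming from $a=t_1/t_2$ and $t_3,\dots,t_n$, this gives the lattice $\mathbb{Z}^{n-1}+(0,\tfrac12,\dots,\tfrac12)\mathbb{Z}$. Concretely, the $\lambda_i$ are all integers or all half-integers, and $\lambda$ stays an integer because $SL(2)$ is already simply connected.

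The step that needs the most care is the parity of $j$ in the decomposition of $q_*\mathcal{E}_V$. We must determine how $S\cap H=\mu_2$ acts on $V$, and this now depends on both $\lambda$ and whether the $\lambda_i$ are half-integral. We would read it off by writing the $P$-weight $(\tfrac{\lambda-j}{2},\tfrac{-\lambda-j}{2},\lambda_1,\dots,\lambda_{n-2})$ and requiring it to lie in the $\operatorname{Spin}(2n)$ weight lattice. This forces $\tfrac{\lambda-j}{2}\in\mathbb{Z}+\lambda_1$, so $k=\tfrac{\lambda-j}{2}$ ranges exactly over $\mathbb{Z}+\lambda_1$. That is the reason for the condition $N\in\mathbb{Z}+\lambda_1$ in $A$. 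The identification uses that $-1\in S$ maps to a central element of $\operatorname{Spin}(2n)$ lying over $\operatorname{diag}(-\operatorname{Id}_2,\operatorname{Id},-\operatorname{Id}_2)$. We would check this through $\exp(\pi i h_\theta)$, and it is the point where an error is most likely.

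\textbf{Step 2: the pushforward and BWB.} With this parity in hand, $q$ is affine with fibre $S/\mu_2$, so
\[
H^m(\mathcal{E}_V,G/H)=\bigoplus_{k\in\mathbb{Z}+\lambda_1}H^m\bigl(\mathcal{E}(k,k-\lambda,\lambda_1,\dots,\lambda_{n-2}),G/P\bigr).
\]
By Proposition \ref{localseqMCM} with $\dim\mathcal{O}_{\min}=4n-6$, maximal Cohen-Macaulayness is equivalent to these groups vanishing for $1\le m\le 4n-8$. Theorem \ref{BWBparabolic} applies verbatim, since $\rho=(n-1,\dots,0)$ and the Weyl group are the same for $\operatorname{Spin}(2n)$. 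So the condition is that $\Lambda=(k+n-1,k-\lambda+n-2,\lambda_1+n-3,\dots,\lambda_{n-2})$ is either singular or has $l(w)\in\{0,4n-7\}$.

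\textbf{Step 3: the combinatorics.} The length count from the proof of Theorem \ref{Dtypetheorem} never used integrality. It gives $l(w)=4n-7$ iff $k<-\lambda_1-2n+4$, and $l(w)=0$ iff $k>\lambda+\lambda_1-1$. Hence every $k$ in the middle range must make $\Lambda$ singular. The $k$ for which $k+n-1=\pm(\lambda_i+n-2-i)$ are excluded from $A$. For the remaining $k\in A$, singularity forces $\lambda\in\{k-\lambda_i+i,\ k+2n-4+\lambda_i-i,\ 2k+2n-3\}$ for some $i$. Intersecting over $k\in A$ gives the stated criterion, and the converse holds because each step is an equivalence.
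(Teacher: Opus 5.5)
Your proposal is correct and follows essentially the paper's route. You run the five-step scheme of Section~\ref{abstractanalysis} for $\operatorname{Spin}(2n)$ and reuse the length count from Theorem~\ref{Dtypetheorem}, which indeed never uses integrality. The only new input, $k\in\mathbb{Z}+\lambda_1$, comes from requiring $(k,k-\lambda,\lambda_1,\dots,\lambda_{n-2})$ to lie in $\mathbb{Z}^n+\tfrac12(1,\dots,1)\mathbb{Z}$.

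There is one harmless slip in the step you flagged. $\theta^\vee(-1)$ is central in the Levi factor of $P$, not in $\operatorname{Spin}(2n)$, since its image $\operatorname{diag}(-\operatorname{Id}_2,\operatorname{Id},-\operatorname{Id}_2)$ is not central in $SO(2n)$. Your lattice-membership argument does not use this claim, and it is equivalent to $S\cap H$ acting on $V$ by $(-1)^{\lambda+2\lambda_1}$.
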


\begin{theorem}
Suppose $\mathcal{O}_{\min}$ is the minimal orbit in $\mathfrak{so}_{2n+1}$ $(n\geq 3)$ with respect to the conjugation action of $\operatorname{Spin}(2n+1)$ and $H$ is the stabilizer. An equivariant bundle $\mathcal{E}_V=\operatorname{Spin}(2n+1)\times_H V^\vee$ corresponding to an irreducible representation $V$ of $H$ is determined by a dominant weight $(\lambda,\lambda_1,\lambda_2...\lambda_{n-2})$ with $\lambda\geq 0$ and $\lambda_1\geq...\geq\lambda_{n-3}\geq\lambda_{n-2}\geq0$ in $H$'s lattice $\mathbb{Z}^{n}+(0,\frac{1}{2},...,\frac{1}{2})\mathbb{Z}$. The pushforward $i_*\mathcal{E}_V$ is maximal Cohen-Macaulay on the orbit closure $\overline{\mathcal{O}_{\min}}$ if and only if 
for any element $k$ in the following set
$$\lambda\in\bigcap_{k\in A}\left(\bigcup_{i=1}^{n-2}\{k-\lambda_i+i,k+\lambda_i+2n-3-i\}\cup\{2k+2n-2,k+n-\frac{3}{2}\}\right)$$
where 
$$A=\{N|N\in\mathbb{Z}+\lambda_1, -\lambda_1-2n+3\leq N\leq\lambda+\lambda_1-1, N\notin\bigcup_{i=1}^{n-2} \{\lambda_i-1-i,-\lambda_i-2n+2+i\},N\neq \frac{1}{2}-n\}
$$
\end{theorem}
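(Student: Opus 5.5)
We follow the five-step recipe of Section \ref{abstractanalysis} for $G=\operatorname{Spin}(2n+1)$ and repeat the $SO(2n+1)$ computation behind Theorem \ref{Btypetheorem}. The only change is that the weight lattice now contains the half-integral coset.

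\textbf{Identify the weights.} Take $H=\operatorname{Stab}(e_\theta)$, $S=\theta^\vee(\mathbb{G}_m)$ and $P=SH$. By the discussion above, $S\cap H\cong\mu_2$, $S$ commutes with the Levi factor $L$, and $[L,L]\cong SL(2)\times\operatorname{Spin}(2n-3)$ is simply connected. So irreducible $H$-modules are indexed by dominant weights of $T\cap H$. In the coordinates of the $SO$ computation these are $(\lambda,\lambda_1,\dots,\lambda_{n-2})$ with $\lambda\ge0$ and $\lambda_1\ge\dots\ge\lambda_{n-2}\ge0$, lying in $\mathbb{Z}^n+(0,\tfrac12,\dots,\tfrac12)\mathbb{Z}$. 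Thus the $\lambda_i$ are all integers or all half-integers.

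\textbf{Push forward to $G/P$.} The map $q:G/H\to G/P$ is affine with fibre $S/\mu_2$, and $S$ commutes with $L$. As before, $q_*\mathcal{E}_V$ splits as $\bigoplus_j\mathcal{E}(\tfrac{\lambda-j}{2},\tfrac{-\lambda-j}{2},\lambda_1,\dots,\lambda_{n-2})$. Here $j$ runs over the residues permitted by the $\mu_2$-character, and that character is now read off from $T\cap H$ rather than from $\lambda$ alone. The point to check is that the resulting first two coordinates lie in the same coset as the $\lambda_i$, which makes the full weight a $\operatorname{Spin}(2n+1)$-weight. So $k=\tfrac{\lambda-j}{2}$ ranges over $\mathbb{Z}+\lambda_1$ rather than over $\mathbb{Z}$. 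Proposition \ref{localseqMCM} then reduces the MCM condition to the vanishing of $H^m$ on $G/P$ for $1\le m\le 4n-6$.

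\textbf{Apply Borel--Weil--Bott.} By Theorem \ref{BWBparabolic}, each
\[
\Lambda=(k+n-\tfrac12,\;k-\lambda+n-\tfrac32,\;\lambda_1+n-\tfrac52,\;\dots,\;\lambda_{n-2}+\tfrac12)
\]
must be singular or have $l(w)\in\{0,4n-5\}$. The length count from the $B_n$ section carries over verbatim. It gives $l(w)=0$ exactly when $k>\lambda+\lambda_1-1$, and $l(w)=4n-5$ exactly when $k<-\lambda_1-2n+3$. Hence every $k\in\mathbb{Z}+\lambda_1$ in the window $[-\lambda_1-2n+3,\lambda+\lambda_1-1]$ must make $\Lambda$ singular.

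\textbf{Enumerate singularities.} $\Lambda$ is singular when two coordinates agree up to sign, or when some coordinate vanishes. In the integral case the first two coordinates are half-integers, so they never vanish. In the half-integral case they are integers and can vanish: $k+n-\tfrac12=0$ gives $k=\tfrac12-n$, and $k-\lambda+n-\tfrac32=0$ gives $\lambda=k+n-\tfrac32$.

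These are exactly the new terms in the statement. The value $k=\tfrac12-n$ is removed from $A$, because $\Lambda$ is automatically singular there. The option $\lambda=k+n-\tfrac32$ is added to the admissible set. The remaining conditions, a coincidence of the first coordinate with $\pm(\lambda_i+n-\tfrac32-i)$ excluded by the definition of $A$, and the second coordinate matching some $\pm(\lambda_i+\dots)$ or the sum of the first two vanishing, are handled exactly as in Theorem \ref{Btypetheorem}. Intersecting over $k\in A$ gives the stated criterion.

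\textbf{Main obstacle.} The hardest step is the parity bookkeeping in the second paragraph. One has to pin down precisely how the $\mu_2=S\cap H$ character, the coset of the $\lambda_i$, and $\lambda$ interact in the Spin lattice, so that the allowed $k$ are exactly $\mathbb{Z}+\lambda_1$. The first thing to try is to compute $\theta^\vee(-1)$ explicitly as an element of $T\subset\operatorname{Spin}(2n+1)$ and evaluate the weight on it. Once that is settled, the rest is the $B_n$ argument with the extra zero-coordinate singularities tracked.
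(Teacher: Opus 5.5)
Your proposal is correct and follows the paper's route: the paper obtains this theorem by running the five-step framework of Section~\ref{abstractanalysis} through the $B_n$ computation of Theorem~\ref{Btypetheorem}, and the only new features are the ones you identify, namely $k$ ranging over $\mathbb{Z}+\lambda_1$, the automatic singularity at $k=\frac12-n$ (removed from $A$), and the extra admissible value $\lambda=k+n-\frac32$ coming from a vanishing second coordinate. The parity step you flag as the main obstacle is immediate: since $T=(T\cap H)S$ with $(T\cap H)\cap S=\mu_2$, the admissible $j$ are exactly those for which the $T$-weight lies in $\mathbb{Z}^n\cup(\mathbb{Z}+\frac12)^n$, and this agrees with your suggested check that $\theta^\vee(-1)=\alpha_1^\vee(-1)\alpha_n^\vee(-1)$ acts on $V$ by $(-1)^{\lambda+2\lambda_{n-2}}$.
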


\subsection{Exceptional cases}\label{exceptional}

For exceptional cases, we know the types of semisimple Lie groups $[L,L]$ from the previous chapter. What is left for us to do is to carefully match the characters of $T\cap H$ and the characters of $T$ such that we can obtain the numerical results. 

We will use the language from the previous section: Suppose $G$ is a simple group of exceptional type with Lie algebra $\mathfrak{g}$, $T$ is a maximal torus with Lie algebra $t$ and $\Phi$ is the corresponding root system. Let's also choose a basis $\Delta$ and suppose $\theta$ is the highest root with the highest root vector $e_\theta$ and half positive root sum $\rho$. We also suppose that $H$
is the stabilizer of $e_\theta$ with respect to the adjoint action of $G$, $P$ is the stabilizer of $\mathbb{C}e_\theta$, $L$ is the Levi factor of $P$. Finally, let's denote $S=\theta^\vee(\mathbb{G}_m)$.

Let's start with a lemma that shows the semisimplicity and connectedness of the Levi factor of $H$.

\begin{lemma}\label{exceptionalsimple}
Suppose $U(H)$ is the unipotent radical of $H$, then $H/U(H)=[L,L]$, it is connected, simply connected, and simple with root system $\Phi^\natural$ by removing extra vertex and points connected to it in the affine Dynkin diagram of $G$.
\end{lemma}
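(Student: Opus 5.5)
The plan is to work with the Levi decomposition $P=L\ltimes U_P$, where $L=C_G(S)$ is the centralizer of the torus $S=\theta^\vee(\mathbb{G}_m)$, so that $\operatorname{Lie}L=\mathfrak{g}_0=\mathbb{C}h_\theta\oplus\mathfrak{g}^\natural$ and $\operatorname{Lie}U_P=\mathfrak{g}_1\oplus\mathbb{C}e_\theta$. Since $\mathfrak{g}_1\oplus\mathbb{C}e_\theta\subset\mathfrak{c}(e_\theta)$, the unipotent group $U_P$ lies in $H$ and is normal there. Moreover $H/U_P$ is a normal subgroup of $P/U_P\cong L$, so $U_P$ is a connected normal unipotent subgroup of $H$. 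Hence $U(H)\supseteq U_P$.

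We then have $H/U_P\cong H\cap L=\ker(\varphi|_L)$, where $\varphi:L\to\mathbb{G}_m$ is the character $\theta$ (the action on $\mathbb{C}e_\theta$). I will show $\ker(\varphi|_L)=[L,L]$. Since $\varphi$ is a character, $[L,L]\subseteq\ker\varphi|_L$. For the reverse inclusion, write $L=Z(L)^\circ\cdot[L,L]$.

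In the exceptional types (and in types $C_n$, $A_1\subset$ the others), the table shows that $[\mathfrak{g}^\natural,\mathfrak{g}^\natural]$ has rank $\operatorname{rank}\mathfrak{g}-1$, equal to the number of simple roots orthogonal to $\theta$. Indeed, for $E_6,E_7,E_8,F_4,G_2$ exactly one simple root meets the extra node, so $\mathfrak{g}^\natural=\ker\theta\oplus\bigoplus\mathfrak{g}_\alpha$ is semisimple: its center is $\ker\theta\cap\bigcap_{\alpha\perp\theta}\ker\alpha=0$, because $\ker\theta$ has dimension $n-1$ and the $n-1$ orthogonal simple roots are linearly independent on it. Consequently $Z(L)^\circ$ is one-dimensional with Lie algebra $\mathbb{C}h_\theta$, so $Z(L)^\circ=S$, and $L=S\cdot[L,L]$.

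Now take $x=s\ell\in\ker\varphi$ with $s\in S$ and $\ell\in[L,L]$. Then $\varphi(s)=s^2=1$, so $s\in\mu_2$. Since $-1=\theta^\vee(-1)$ lies in $S\cap T$, and I expect it also lies in $[L,L]$, we get $x\in[L,L]$. This is the main obstacle: we must show $\theta^\vee(-1)\in[L,L]$, or else replace the equality $S\cap H=\mu_2$ by a statement about $S\cap[L,L]$. My first attempt is to check whether $\theta^\vee$ is a $\mathbb{Z}$-combination of the coroots $\alpha^\vee$ with $\alpha\perp\theta$, modulo $2$, in the coroot lattice. Equivalently, I would ask whether $\theta^\vee(-1)$ acts trivially on $\mathfrak{g}$ modulo $[L,L]$. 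Its action on $\mathfrak{g}_{\pm1}$ is $-1$, and the central element of $[L,L]$ acting by $-1$ on $\mathfrak{g}_1$ would be the candidate. I would verify this case by case using the known $\mathfrak{g}_1$: it is $\wedge^3\mathbb{C}^6$ for $E_6$, the half-spin representation for $E_7$, the $56$-dimensional representation for $E_8$, $\wedge^3_0\mathbb{C}^6$ for $F_4$, and $S^3\mathbb{C}^2$ for $G_2$. Each of these is a faithful-on-center representation in which the nontrivial central element of order $2$ acts by $-1$. It follows that $H\cap L=[L,L]$, and that this group is connected.

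Connectedness of $H$ modulo $U_P$ then gives $U(H)=U_P$, because $[L,L]$ is reductive, so $H/U(H)=[L,L]$. Simple connectedness follows from \cite{ConradReductiveGroupsOverFields} Corollary 9.5.11, as used in Section~\ref{abstractanalysis}. Its root system is $\{\alpha\in\Phi:\langle\alpha,\theta\rangle=0\}$, which has the simple roots orthogonal to $\theta$ as a basis. Removing the extra node $-\theta$ and its neighbours from the affine diagram yields exactly the diagram of these simple roots, which is connected in the exceptional cases (table: $A_5,D_6,E_7,C_3,A_1$). Hence $[L,L]$ is simple.
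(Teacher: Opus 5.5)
Your reduction is the same as the paper's: $H/U(H)\cong\ker(\chi|_L)$ and $L=S\cdot[L,L]$, so everything comes down to $\theta^\vee(-1)\in[L,L]$. Your argument that $\mathfrak{g}^\natural$ is semisimple is fine. The gap is in how you settle that last step. The coroot criterion you name first is \emph{not} equivalent to the adjoint-action test you then carry out, because the adjoint test only sees $\theta^\vee(-1)$ modulo $Z(G)$.

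Take $z\in Z([L,L])$ acting by $-1$ on $\mathfrak{g}_1$. Then $z$ acts by $-1$ on $\mathfrak{g}_{-1}\cong\mathfrak{g}_1^\vee$ and trivially on $\mathfrak{g}_0$ and $\mathfrak{g}_{\pm 2}$. This gives only $\operatorname{Ad}(\theta^\vee(-1))=\operatorname{Ad}(z)$, i.e.\ $\theta^\vee(-1)\in z\,Z(G)$.
\begin{itemize}
\item For $E_8,F_4,G_2$ this suffices, since $Z(G)=1$.
\item For $E_6$ it suffices once you add that $\theta^\vee(-1)z^{-1}$ has order dividing $2$ in $Z(G)\cong\mu_3$.
\item For $E_7$ it does not. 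There $Z(G)\cong\mu_2$ is invisible to $\operatorname{Ad}$, so you would still need $Z(G)\subseteq[L,L]$. Since $Z(G)\subseteq T\cap H$, that is essentially the statement being proved.
\end{itemize}
Two side remarks. ``Faithful on the center'' is false for $\wedge^3\mathbb{C}^6$ and for a half-spin module of $\operatorname{Spin}(12)$. In the latter case two central elements act by $-1$, so ``the'' element is not well defined, though only existence is needed.

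The fix is to carry out the parity check you proposed first, which is exactly the paper's argument. In each exceptional type, let $\alpha_m^\vee$ be the simple coroot adjacent to the affine node, and write $\theta^\vee=\sum_i c_i\alpha_i^\vee$. Then $c_m=2$, so $\theta^\vee(-1)=\prod_{i\in I}\alpha_i^\vee(-1)^{c_i}$ lies in the maximal torus of $[L,L]$. For example, for $E_7$ this gives $\theta^\vee(-1)=\alpha_3^\vee(-1)\alpha_5^\vee(-1)\alpha_7^\vee(-1)$. This computation takes place in $G$ itself, not modulo its center, and is uniform across the five types.

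If you prefer to keep your route, you can close $E_7$ separately. Let $\omega$ be the nontrivial element of $Z(\operatorname{Spin}(12))$ acting trivially on the half-spin module $\mathfrak{g}_1$. By the same bookkeeping as above, $\omega$ acts trivially on all of $\mathfrak{g}$. Hence $\omega$ is the nontrivial element of $Z(G)$, and so $Z(G)\subseteq[L,L]$.

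The remaining steps are fine: $U(H)=U_P$, connectedness, simple connectedness via Conrad, and simplicity from the connected diagram.
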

\begin{proof}
The parabolic group $P$ acts on $\mathbb{C}e_\theta$ by character $\chi$, i.e., $g.e_\theta=\chi(g)e_\theta$ for $g\in P$. Then $H=\ker\chi$. Now $P$ has Levi decomposition $P=L\ltimes U$. By the property of unipotent groups, $U\in\ker\chi= H$. Therefore, we see that the Levi decomposition of $H$ is $H=\ker\chi|_L\ltimes U$, so $H/U(H)=\ker\chi|_L$. So we need to show $\ker\chi|_L=[L,L]$.

As discussed in the previous section, $\mathfrak{g}$ has decomposition
$$
\mathfrak{g}=\mathbb{C}f_\theta\oplus\mathfrak{g}_{-1}\oplus\mathfrak{g}^\natural\oplus\mathbb{C}h_\theta\oplus\mathfrak{g}_1\oplus\mathbb{C}e_\theta
$$
and $\operatorname{Lie}(L)=\mathfrak{g}^\natural\oplus\mathbb{C}h_\theta$. Now check case by case, we know that the root system $\Phi^\natural$ for $\mathfrak{g}^\natural$ satisfies that $\operatorname{rank}\Phi^\natural=\operatorname{rank}\Phi-1$. While the rank of the maximal toral subalgebra of $\mathfrak{g}^\natural$ is also one less than the rank of the maximal toral subalgebra of $\mathfrak{g}$ (since the only difference is $\mathbb{C}h_\theta$), we see that $\mathfrak{g}^\natural$ is semisimple with root system $\Phi^\natural$. Then $\operatorname{Lie}([L,L])=\mathfrak{g}^\natural$. As we discussed in the previous section, by \cite{ConradReductiveGroupsOverFields} Corollary 9.5.11, $[L,L]$ is connected, simply connected.

Since $S=\theta^\vee(\mathbb{G}_m)=\exp(\mathbb{C}h_\theta)$, we see that $S\cdot[L,L]=L$. Now $[L,L]\subseteq\ker\chi$, and we can compute
$$\operatorname{Ad}(\theta^\vee(t))e_\theta=t^{\langle\theta^\vee,\theta\rangle}e_\theta=t^2e_\theta
$$
So $\theta^\vee(t)\in\ker\chi$ if and only if $t=\pm 1$. So $H/U(H)=[L,L]$ if and only if $\theta^\vee(-1)\in[L,L]$. Now the maximal torus in $[L,L]$ is generated by $\alpha^\vee(\mathbb{G}_m)$ for $\alpha\in\Delta\cap\Phi^\natural$. Suppose $\Delta=\alpha_1,...,\alpha_n$ and $I$ is the subset of indices $i$ such that $\alpha_i\in\Phi^\natural$. If we denote the coroot lattice of $\Phi$ by $Q^\vee$, we see that as long as 
$\theta^\vee\equiv\sum_{i\in I}c_i\alpha_i^\vee(\operatorname{mod} 2Q^\vee)$ for some integers $c_i,i\in I$, we have $\theta^\vee(-1)=\prod_{i\in I}\alpha_i^\vee(-1)^{c_i}\in[L,L]$. This is satisfied for exceptional case Lie algebras. For example, in type $E_6$, $\theta^\vee=\alpha_1^\vee+2\alpha_2^\vee+2\alpha_3^\vee+3\alpha_4^\vee+2\alpha_5^\vee+\alpha_6^\vee$ and $I=\{1,3,4,5,6\}$, so $\theta^\vee\equiv\alpha_1^\vee+2\alpha_3^\vee+3\alpha_4^\vee+2\alpha_5^\vee+\alpha_6^\vee (\operatorname{mod} Q^\vee)$. Indeed, in the expression of $\theta^\vee$ in terms of coroot basis, the coordinate with respect to the basis coroot which is connected to $\theta$ in the affine Dynkin diagram is always 2. 
\end{proof}

Notice that there are two properties 
for exceptional cases that are not satisfied for $A,B,C,D$ types. Firstly,  $\operatorname{rank}\Phi^\natural=\operatorname{rank}\Phi-1$ fails for $A_n$ types, so $H/U(H)$ for $A_n$ type is a double cover of $GL_{n-1}$ which is not semisimple. In addition, the last property in the proof fails for $C_n$, so $H/U(H)$ for $C_n$ type is the product of $\mu_2$ and a $Sp(2n-2)$.

Now an irreducible $[L,L]$-representation (equivalently, an irreducible $H$-representation) $V_\lambda$ is determined by a dominant weight $\lambda$ in the weight lattice of $\Phi^\natural$. This gives an equivariant bundle $\mathcal{E}_V$ on $G/H$. Now $P=SH$, $S\cap H=\mu_2$, so the quotient map $q:G/H\rightarrow G/P$ is affine with fibers $S/\mu_2$. When pushing forward along $q$, since $S$ commutes with $L$ and hence $[L,L]$, the vector bundle splits into irreducible $P$-equivariant bundles as follows
$$q_*\mathcal{E}_{V_\lambda}\cong\bigoplus_{j\in J}G\times_H(V_\lambda^\vee\otimes\mathbb{C}t^{-j})\cong\bigoplus_{j\in J}\mathcal{E}_{\lambda+\frac{j}{2}\theta}
$$
where $J$ is the subset of $\mathbb{Z}$ consisting of integers that make the weight $\lambda+\frac{j}{2}\theta$ integral. This is because one can check that (using the notations in the proof of Lemma \ref{exceptionalsimple})
$$\langle\lambda+\frac{j}{2}\theta,\alpha_i^\vee\rangle=\langle\lambda,\alpha_i^\vee\rangle,\ i\in I;\quad \langle\lambda+\frac{j}{2}\theta,\theta^\vee\rangle=\frac{j}{2}\langle\theta,\theta^\vee\rangle=j
$$
So the character $\lambda+\frac{j}{2}\theta$ is the same as $\lambda$ restricted to the maximal torus of $[L,L]$ and is of weight $j$ when restricted to $S$. This is exactly how the maximal torus in $P$ acts on $V_\lambda\otimes\mathbb{C}t^j$. Therefore, we have
$$H^\ast (\mathcal{E}_{V_\lambda},G/H)=\bigoplus_{j\in J}H^\ast(\mathcal{E}_{\lambda+\frac{j}{2}\theta},G/P)
$$
Now suppose $d=\dim\mathcal{O}_{\min}$. By Proposition \ref{localseqMCM} and the BWB theorem, the statement that $i_*\mathcal{E}_{V_\lambda}$ is maximal Cohen-Macaulay is equivalent to that for any $j\in J$, $\lambda+\frac{j}{2}\theta+\rho$ is either singular or $l(w)\in\{0,d-1\}$ for the unique Weyl group element $w$ sending it $\lambda+\frac{j}{2}\theta+\rho$ to a dominant weight.

For integral nonsingular weight $\Lambda=\lambda+\frac{j}{2}\theta+\rho$, the length of $w$ can be computed as $\#\{\beta\in\Phi^+|\langle\Lambda,\beta^\vee\rangle<0\}$. Now since $\lambda$ is already dominant integral in the weight space of $\Phi^\natural$, we see that $\langle\Lambda,\beta\rangle=0$ for $\beta\in (\Phi^\natural)^+$, so essentially we have 
$$l(w)=\#\{\beta\in\Phi^+|\langle\Lambda,\beta^\vee\rangle<0\}=\#\{\beta\in\Phi^+\setminus(\Phi^\natural)^+|\langle\Lambda,\beta^\vee\rangle<0\}
$$
Now $\#\Phi^+-\#(\Phi^\natural)^+=\dim G/P=d-1$, so we see that $l(w)=d-1$ if and only if $\langle\Lambda,\beta^\vee\rangle<0$ for every $\beta\in\Phi^+\setminus(\Phi^\natural)^+$. In addition, we can also derive from above that $l(w)=0$ if and only if $\langle\Lambda,\beta^\vee\rangle>0$ for every $\beta\in\Phi^+\setminus(\Phi^\natural)^+$. Hence, $i_*\mathcal{E}_{V_\lambda}$ is maximal Cohen-Macaulay on $\mathcal{O}_{\min}$ if and only if, for each $j$, the set  $A_j=\{\langle\lambda+\frac{j}{2}\theta+\rho,\beta^\vee\rangle| \beta\in\Phi^+\setminus(\Phi^\natural)^+\}$ either contains $0$ or the elements in it have the same sign. 

Since in the exceptional cases, the highest root vertex is connected to a single simple root vertex in the affine Dynkin diagram, there is only one index, which we denote by $m$, that is not in $I$. For any root $\beta$, we can write $\beta^\vee=c_m\alpha_m^\vee+\sum_{i\in I}c_i\alpha_i^\vee$. Suppose $\langle\lambda,\alpha_i^\vee\rangle=y_i$ for $i\in I$, then $\langle\Lambda,\alpha_i^\vee\rangle=\langle\lambda+\frac{j}{2}\theta+\rho,\alpha_i^\vee\rangle=y_i+1$, which is independent of $j$. Now suppose $k=\langle\Lambda,\alpha_m^\vee\rangle$, one sees that when $j$ goes over elements in $J$, $k$ actually goes through all integers. Now 
$$\langle\Lambda,\beta^\vee\rangle=\langle\Lambda,c_m\alpha_m^\vee+\sum_{i\in I}c_i\alpha_i^\vee\rangle=c_mk+\sum_{i\in I}c_iy_i
$$
We see that the condition is equivalent to that for each $k<0$, either there exists $\beta\in \Phi^+\setminus(\Phi^\natural)^+$ such that $k=-\frac{\sum_{i\in I}c_iy_i}{c_m}$, or $k<-M=-\max_{\beta\in \Phi^+\setminus(\Phi^\natural)^+}\{\frac{\sum_{i\in I}c_iy_i}{c_m}\}$. This is equivalent to checking whether
$$\{1,2,...,\lfloor M\rfloor\}\in\left\{\left.\frac{\sum_{i\in I}c_iy_i}{c_m}\right|\beta\in\Phi^+\setminus(\Phi^\natural)^+\right\}
$$
If this holds, then $\frac{\sum_{i\in I}c_iy_i}{c_m}<\#\Phi^+-\#(\Phi^\natural)^+=d-1$. Since $\lambda$ is a dominant weight in $\Phi^\natural$, $y_i$ are nonnegative integers, so this is actually a finite check.

\subsection{Classification of weights in exceptional cases}\label{exceptionalclassifications}
Now what is left for us to do is to check the condition at the end of the last section case by case. We directly show the results and omit the tedious computation process.

\ 

\textbf{$E_6$ case:}
We use the realization
\[
V=\{x\in \mathbb R^8\mid x_6=x_7=-x_8\}.
\]
Take simple roots
\[
\begin{aligned}
\alpha_1&=\frac12(e_1-e_2-e_3-e_4-e_5-e_6-e_7+e_8),&
\alpha_2&=e_1+e_2,\\
\alpha_3&=-e_1+e_2,&
\alpha_4&=-e_2+e_3,\\
\alpha_5&=-e_3+e_4,&
\alpha_6&=-e_4+e_5.
\end{aligned}
\]
The Dynkin diagram is
\[
\begin{array}{ccccccccccc}
&&&& \alpha_2 &&&&&&\\
&&&& | &&&&&&\\
\alpha_1&-&\alpha_3&-&\alpha_4&-&\alpha_5&-&\alpha_6.
\end{array}
\]
The highest root is
\[
\theta=\alpha_1+2\alpha_2+2\alpha_3+3\alpha_4+2\alpha_5+\alpha_6
=\frac12(e_1+e_2+e_3+e_4+e_5-e_6-e_7+e_8).
\]
Let
\[
\beta_1=\alpha_1,\qquad
\beta_2=\alpha_3,\qquad
\beta_3=\alpha_4,\qquad
\beta_4=\alpha_5,\qquad
\beta_5=\alpha_6.
\]
Then \(\beta_1,\dots,\beta_5\) form an \(A_5\)-system. Its fundamental weights
\(\varpi_i\in \operatorname{span}_{\mathbb R}\{\beta_1,\dots,\beta_5\}\) are
\[
(\varpi_i,\beta_j)=\delta_{ij}.
\]
Explicitly,
\[
\begin{aligned}
\varpi_1&=-\frac14 e_1+\frac14(e_2+e_3+e_4+e_5)-\frac5{12}e_6-\frac5{12}e_7+\frac5{12}e_8,\\
\varpi_2&=-\frac23e_1+\frac13(e_2+e_3+e_4+e_5)-\frac13e_6-\frac13e_7+\frac13e_8,\\
\varpi_3&=-\frac12(e_1+e_2)+\frac12(e_3+e_4+e_5)-\frac14e_6-\frac14e_7+\frac14e_8,\\
\varpi_4&=-\frac13(e_1+e_2+e_3)+\frac23(e_4+e_5)-\frac16e_6-\frac16e_7+\frac16e_8,\\
\varpi_5&=-\frac16(e_1+e_2+e_3+e_4)+\frac56e_5-\frac1{12}e_6-\frac1{12}e_7+\frac1{12}e_8.
\end{aligned}
\]
A dominant \(A_5\)-weight is
\[
\lambda=a_1\varpi_1+a_2\varpi_2+a_3\varpi_3+a_4\varpi_4+a_5\varpi_5,
\qquad a_i\in \mathbb Z_{\geq 0}.
\]
The weights satisfying the maximal Cohen-Macaulay condition are precisely those with
\[
a_3=0
\]
and with \((a_1,a_2,a_4,a_5)\) appearing in the following table:
\[
\begin{array}{c|c}
(a_1,a_2,a_4) & \text{allowed }a_5 \\ \hline
(0,0,0) & 0,\dots,5\\
(0,0,1) & 0,\dots,5\\
(0,0,2) & 0,\dots,5\\
(0,1,0) & 0,\dots,4\\
(0,2,0) & 0\\
(1,0,0) & 0,\dots,6\\
(1,0,1) & 0,\dots,6\\
(1,1,0) & 0,\dots,5\\
(1,2,0) & 0\\
(2,0,0) & 0,\dots,7
\end{array}
\qquad
\begin{array}{c|c}
(a_1,a_2,a_4) & \text{allowed }a_5 \\ \hline
(2,0,1) & 0,1,2\\
(2,1,0) & 0,\dots,6\\
(2,2,0) & 0\\
(3,0,0) & 0,1,2\\
(3,0,1) & 0,1,2,3\\
(3,1,0) & 0,1,3\\
(3,2,0) & 0\\
(4,0,0) & 0,1,2\\
(4,0,1) & 0,1,2\\
(4,1,0) & 0,1
\end{array}
\qquad
\begin{array}{c|c}
(a_1,a_2,a_4) & \text{allowed }a_5 \\ \hline
(4,2,0) & 0\\
(5,0,0) & 0,1,2\\
(5,0,1) & 1,2\\
(5,1,0) & 0,1\\
(5,2,0) & 0\\
(6,0,0) & 1,2\\
(6,0,1) & 2\\
(6,1,0) & 1\\
(7,0,0) & 2\\
\phantom{(7,0,0)} & \phantom{2}
\end{array}
\]
There are $97$ such weights.

\ 

\textbf{$E_7$ case:}
We realize \(E_7\) in the subspace
\[
V=\{x\in \mathbb R^8\mid x_7=-x_8\}.
\]
The roots are
\[
\Phi(E_7)
=
\{\pm e_i\pm e_j\mid 1\leq i<j\leq 6\}
\cup
\{\pm(e_7-e_8)\}
\]
together with
\[
\left\{
\frac12\left(\sum_{i=1}^6\varepsilon_i e_i+\varepsilon_7(e_7-e_8)\right)
\ \middle|\ 
\varepsilon_i=\pm 1,\ 
\prod_{i=1}^6\varepsilon_i=-1
\right\}.
\]
Thus \(|\Phi(E_7)|=126\).

We choose simple roots
\[
\begin{aligned}
\alpha_1&=\frac12(e_1-e_2-e_3-e_4-e_5-e_6-e_7+e_8),&
\alpha_2&=e_1+e_2,\\
\alpha_3&=-e_1+e_2,&
\alpha_4&=-e_2+e_3,\\
\alpha_5&=-e_3+e_4,&
\alpha_6&=-e_4+e_5,\\
\alpha_7&=-e_5+e_6.
\end{aligned}
\]
The Dynkin diagram is
\[
\begin{array}{ccccccccccc}
&&&& \alpha_2 &&&&&&\\
&&&& | &&&&&&\\
\alpha_1&-&\alpha_3&-&\alpha_4&-&\alpha_5&-&\alpha_6&-&\alpha_7 .
\end{array}
\]
The highest root is
\[
\theta
=
2\alpha_1+2\alpha_2+3\alpha_3+4\alpha_4+3\alpha_5+2\alpha_6+\alpha_7
=
e_8-e_7.
\]

The roots orthogonal to \(\theta\) are generated by
\[
\alpha_2,\alpha_3,\alpha_4,\alpha_5,\alpha_6,\alpha_7,
\]
which form a \(D_6\)-root system. Put
\[
\beta_1=\alpha_2,\qquad
\beta_2=\alpha_3,\qquad
\beta_3=\alpha_4,\qquad
\beta_4=\alpha_5,\qquad
\beta_5=\alpha_6,\qquad
\beta_6=\alpha_7.
\]
The intrinsic \(D_6\)-fundamental weights \(\varpi_i\in
\operatorname{span}_{\mathbb R}\{\beta_1,\dots,\beta_6\}\), defined by \((\varpi_i,\beta_j)=\delta_{ij},\) are
\[
\begin{aligned}
\varpi_1&=\frac12(e_1+e_2+e_3+e_4+e_5+e_6),\\
\varpi_2&=-\frac12e_1+\frac12(e_2+e_3+e_4+e_5+e_6),\\
\varpi_3&=e_3+e_4+e_5+e_6,\\
\varpi_4&=e_4+e_5+e_6,\\
\varpi_5&=e_5+e_6,\\
\varpi_6&=e_6.
\end{aligned}
\]
A dominant \(D_6\)-weight is
\[
\lambda=b_1\varpi_1+b_2\varpi_2+b_3\varpi_3
+b_4\varpi_4+b_5\varpi_5+b_6\varpi_6,
\qquad b_i\in\mathbb Z_{\geq 0}.
\]
If \(\theta^\vee(\mathbb G_m)\) acts with weight \(j\), then the corresponding
\(E_7\)-weight is
\[
\mu=\lambda+\frac j2\theta
=
\lambda+\frac j2(e_8-e_7).
\]
The numerical solutions are precisely those satisfying
\[
b_2=b_3=0.
\]
Thus
\[
\lambda=b_1\varpi_1+b_4\varpi_4+b_5\varpi_5+b_6\varpi_6,
\]
where \((b_1,b_4,b_5,b_6)\) appears in the following table:
\[
\begin{array}{c|c}
(b_1,b_4,b_5) & \text{allowed }b_6 \\ \hline
(0,0,0) & 0,\dots,7\\
(0,0,1) & 0,\dots,7\\
(0,0,2) & 0,\dots,7\\
(0,0,3) & 0,\dots,7\\
(0,1,0) & 0,\dots,8\\
(0,1,1) & 0,\dots,8\\
(0,1,2) & 0,\dots,8\\
(0,1,3) & 0,\dots,8
\end{array}
\qquad
\begin{array}{c|c}
(b_1,b_4,b_5) & \text{allowed }b_6 \\ \hline
(1,0,0) & 0,\dots,6\\
(1,0,1) & 0,\dots,6\\
(1,0,2) & 0,\dots,6\\
(1,0,3) & 0\\
(1,0,4) & 0\\
(2,0,0) & 0,\dots,5,\ 7\\
(3,0,0) & 0\\
\phantom{(3,0,0)} & \phantom{0}
\end{array}
\]
There are \(99\) such weights.

\ 

\textbf{$E_8$ case:}
We realize \(E_8\) in \(\mathbb R^8\) with orthonormal basis
\(e_1,\dots,e_8\). The roots are
\[
\Phi(E_8)
=
\{\pm e_i\pm e_j\mid 1\leq i<j\leq 8\}
\cup
\left\{
\frac12\sum_{i=1}^8\varepsilon_i e_i
\ \middle|\
\varepsilon_i=\pm1,\ 
\#\{i\mid \varepsilon_i=-1\}\equiv 0 \pmod 2
\right\}.
\]
We choose simple roots
\[
\begin{aligned}
\alpha_1&=\frac12(e_1-e_2-e_3-e_4-e_5-e_6-e_7+e_8),&
\alpha_2&=e_1+e_2,\\
\alpha_3&=-e_1+e_2,&
\alpha_4&=-e_2+e_3,\\
\alpha_5&=-e_3+e_4,&
\alpha_6&=-e_4+e_5,\\
\alpha_7&=-e_5+e_6,&
\alpha_8&=-e_6+e_7.
\end{aligned}
\]
The Dynkin diagram is
\[
\begin{array}{ccccccccccccc}
&&&& \alpha_2 &&&&&&&&\\
&&&& | &&&&&&&&\\
\alpha_1&-&\alpha_3&-&\alpha_4&-&\alpha_5&-&\alpha_6&-&\alpha_7&-&\alpha_8 .
\end{array}
\]
The highest root is
\[
\theta
=
2\alpha_1+3\alpha_2+4\alpha_3+6\alpha_4
+5\alpha_5+4\alpha_6+3\alpha_7+2\alpha_8
=
e_7+e_8.
\]
The roots orthogonal to \(\theta\) are generated by
\[
\alpha_1,\alpha_2,\alpha_3,\alpha_4,\alpha_5,\alpha_6,\alpha_7,
\]
which form an \(E_7\)-root system. Let
\[
\beta_i=\alpha_i,\qquad 1\leq i\leq 7.
\]
Let \(\varpi_1,\dots,\varpi_7\) be the intrinsic fundamental weights of this
\(E_7\)-subsystem, defined by
\[
(\varpi_i,\beta_j)=\delta_{ij},
\qquad
\varpi_i\in \operatorname{span}_{\mathbb R}\{\beta_1,\dots,\beta_7\}.
\]
Explicitly,
\[
\begin{aligned}
\varpi_1&=-e_7+e_8,\\
\varpi_2&=\frac12(e_1+e_2+e_3+e_4+e_5+e_6)-e_7+e_8,\\
\varpi_3&=-\frac12e_1+\frac12(e_2+e_3+e_4+e_5+e_6)
-\frac32e_7+\frac32e_8,\\
\varpi_4&=e_3+e_4+e_5+e_6-2e_7+2e_8,\\
\varpi_5&=e_4+e_5+e_6-\frac32e_7+\frac32e_8,\\
\varpi_6&=e_5+e_6-e_7+e_8,\\
\varpi_7&=e_6-\frac12e_7+\frac12e_8.
\end{aligned}
\]
A dominant \(E_7\)-weight is therefore
\[
\lambda
=
c_1\varpi_1+c_2\varpi_2+c_3\varpi_3+c_4\varpi_4
+c_5\varpi_5+c_6\varpi_6+c_7\varpi_7,
\qquad c_i\in\mathbb Z_{\geq 0}.
\]
The numerical solutions for maximal Cohen-Macaulay condition are precisely those satisfying
\[
c_4=c_5=c_6=c_7=0.
\]
Thus
\[
\lambda=c_1\varpi_1+c_2\varpi_2+c_3\varpi_3,
\]
where \((c_1,c_2,c_3)\) appears in the following triples: $(0,0,0)$, $(0,0,1)$, $(0,1,0)$, $(0,2,0)$,
$(1,0,0)$,$(1,0,1)$, $(1,1,0)$,
$(2,0,0)$, $(2,0,1)$, $(2,1,0)$,
$(3,0,0)$, $(3,0,1)$, $(3,1,0)$,
$(4,0,0)$, $(4,0,1)$, $(4,1,0)$,
$(5,0,0)$, $(5,0,1)$. There are \(18\) such weights.

\ 

\textbf{$F_4$ case:}
For \(F_4\), use the standard realization in \(\mathbb R^4\) with orthonormal basis
\(e_1,e_2,e_3,e_4\). Then
\[
\Phi(F_4)=
\{\pm e_i\mid 1\leq i\leq 4\}
\cup
\{\pm e_i\pm e_j\mid 1\leq i<j\leq 4\}
\cup
\left\{
\frac12(\pm e_1\pm e_2\pm e_3\pm e_4)
\right\}.
\]
Choose simple roots
\[
\alpha_1=e_2-e_3,\qquad
\alpha_2=e_3-e_4,\qquad
\alpha_3=e_4,\qquad
\alpha_4=\frac12(e_1-e_2-e_3-e_4).
\]
The Dynkin diagram is
\[
\alpha_1-\alpha_2\Rightarrow \alpha_3-\alpha_4,
\]
where \(\alpha_1,\alpha_2\) are long and \(\alpha_3,\alpha_4\) are short. The highest root is
\[
\theta
=
2\alpha_1+3\alpha_2+4\alpha_3+2\alpha_4
=
e_1+e_2.
\]
Since \(\theta\) is long, \(\theta^\vee=\theta\).
The roots orthogonal to \(\theta\) are generated by
\[
\alpha_2,\alpha_3,\alpha_4,
\]
which form a \(C_3\)-root system. Put
\[
\beta_1=\alpha_4,\qquad
\beta_2=\alpha_3,\qquad
\beta_3=\alpha_2.
\]
Then the intrinsic \(C_3\)-fundamental weights, defined by
\[
\langle \varpi_i,\beta_j^\vee\rangle=\delta_{ij},
\]
are
\[
\varpi_1=\frac12(e_1-e_2),\qquad
\varpi_2=\frac12(e_1-e_2+e_3+e_4),\qquad
\varpi_3=\frac12(e_1-e_2)+e_3.
\]
A dominant \(C_3\)-weight is
\[
\lambda=a_1\varpi_1+a_2\varpi_2+a_3\varpi_3,
\qquad a_i\in\mathbb Z_{\geq 0}.
\]
The numerical solutions for the maximal Cohen-Macaulay condition are precisely those with
\[
a_3=0,
\]
and
\[
(a_1,a_2)\in
\{(0,0),(0,1),(1,0),(1,1),(2,0),(2,1),
(3,0),(3,1),(4,0),(5,1)\}.
\]
There are \(10\) such weights.

\ 

\textbf{$G_2$ case:}
We realize \(G_2\) in
\[
\{x_1+x_2+x_3=0\}\subset \mathbb R^3
\]
by
\[
\alpha_1=e_1-e_2,\qquad
\alpha_2=-2e_1+e_2+e_3.
\]
Then
\[
(\alpha_1,\alpha_1)=2,\qquad
(\alpha_2,\alpha_2)=6,\qquad
(\alpha_1,\alpha_2)=-3.
\]
The positive roots are
\[
\alpha_1,\quad
\alpha_2,\quad
\alpha_1+\alpha_2,\quad
2\alpha_1+\alpha_2,\quad
3\alpha_1+\alpha_2,\quad
3\alpha_1+2\alpha_2.
\]
The highest root is
\[
\theta=3\alpha_1+2\alpha_2=-e_1-e_2+2e_3.
\]
The roots orthogonal to \(\theta\) are generated by \(\alpha_1\). Let \(\varpi\) be the intrinsic \(A_1\)-fundamental weight,we have
\[
\varpi=\frac12\alpha_1=\frac12(e_1-e_2).
\]
Thus a dominant \(A_1\)-weight is
\[
\lambda=a\varpi,\qquad a\in\mathbb Z_{\geq 0}.
\]
The maximal Cohen-Macaulay condition is computed as $a=0,1,2$. Equivalently, $\lambda=a\varpi,\ a\in\{0,1,2\}.$

\bibliographystyle{amsalpha}
\bibliography{references}
\end{document}